\documentclass[11pt]{article}

  \usepackage{a4wide}
  \usepackage{amsmath}
  \usepackage{amssymb}
  \usepackage{latexsym}
  \usepackage[pdftex]{graphicx}
  \usepackage{subcaption}
  \usepackage{color}
  \usepackage[mathscr]{euscript}
  \usepackage{url}

  \newenvironment{proof}{\vspace{1ex}\noindent{\bf Proof.}}{\hspace*{\fill}$\blacksquare$\vspace{1ex}}
  \newenvironment{proofof}[1]{\vspace{1ex}\noindent{\bf Proof of #1.}}{\hspace*{\fill}$\blacksquare$\vspace{1ex}}

  \newtheorem{theorem}{Theorem} 
  \newtheorem{lemma} [theorem] {Lemma}
  \newtheorem{corollary} [theorem] {Corollary}
  \newtheorem{proposition} [theorem] {Proposition}
  \newtheorem{question} [theorem] {Question}
  
\newcommand{\Ncal}[0]{\ensuremath{{\mathcal N}}}

\newcommand{\Vcal}[0]{\ensuremath{{\mathcal V}}}

\newcommand{\Xcal}[0]{\ensuremath{{\mathcal X}}}
\newcommand{\Ycal}[0]{\ensuremath{{\mathcal Y}}}
\newcommand{\Zcal}[0]{\ensuremath{{\mathcal Z}}}
\newcommand{\eR}[0]{\ensuremath{ \mathbb R}}

\newcommand{\Qu}[0]{\ensuremath{ \mathbb Q}}
\newcommand{\eN}[0]{\ensuremath{ \mathbb N}}

\newcommand{\norm}[1]{\ensuremath{\|#1\|}}

\newcommand{\Bee}[0]{\ensuremath{{\mathbb B}}}
\newcommand{\Pee}[0]{\ensuremath{{\mathbb P}}}
\newcommand{\Haa}[0]{\ensuremath{{\mathbb H}}}
\newcommand{\Ee}[0]{\ensuremath{{\mathbb E}}}

\newcommand{\isd}[0]{\hspace{.2ex} \raisebox{-.1ex}{$=$} \hspace{-1.5ex} 
\raisebox{1ex}{{$\scriptstyle d$}} \hspace{.8ex} }

 \newcommand{\eps}{\varepsilon}

\newcommand{\orig}{\underline{0}}
\newcommand{\BGab}[0]{\ensuremath{B_{\text{Gab}}}}
\newcommand{\pijl}[0]{\ensuremath{\leftrightsquigarrow}}
\usepackage[cmtip,all]{xy}
\newcommand{\langepijl}{\xymatrix{{}\ar@{<~>}[r]&{}}}

\DeclareMathOperator{\dist}{dist}

\DeclareMathOperator{\Po}{Po}
\DeclareMathOperator{\Bi}{Bi}

\DeclareMathOperator{\dd}{d}

\DeclareMathOperator{\arcsinh}{arcsinh}
\newcommand{\tdist}[0]{\ensuremath{\widetilde{\dist}}}

\DeclareMathOperator{\kone}{cone}

\definecolor{orange}{RGB}{255,127,0}
\definecolor{pink}{RGB}{255,150,150}

\definecolor{darkgreen}{rgb}{0,0.4,0}

\begin{document}

\title{Non-vanishing uniqueness threshold for\\hyperbolic Poisson-Voronoi percolation\\in dimension at least three}

\author{%
Matthias Irlbeck\thanks{Bernoulli Institute, University of Groningen, The Netherlands. E-mail: {\tt m.irlbeck@rug.nl}.}
\and
Tobias M\"uller\thanks{Bernoulli Institute, University of Groningen, The Netherlands. E-mail: {\tt tobias.muller@rug.nl}.} 
}

\date{\today}

\maketitle

\begin{abstract} 
We study the threshold for the existence of {\em exactly one} unbounded cluster for 
Poisson-Voronoi percolation on the $d$-dimensional hyperbolic space $\Haa^d$ for $d\geq 3$.
By recent results of Greb\'ik and Recke~\cite{GrebikRecke} and d'Achille et al.~\cite{dAchilleVanishing}, 
this ``uniqueness threshold'' $p_u(\lambda)$ tends to zero as the intensity $\lambda$ of the underlying Poisson point 
process tends to zero, 
for Poisson-Voronoi percolation defined on an ambient space
from a family of geometric spaces that includes Cartesian products $\Haa^{d_1}\times\dots\times\Haa^{d_k}$ with 
$k, d_1,\dots,d_k \geq 2$.
In contrast, for Poisson-Voronoi
percolation on the hyperbolic plane $\Haa^2$ Benjamini and Schramm~\cite{BS2001} have shown that $p_u(\lambda)$ tends to one as 
$\lambda$ tends to zero, and $p_u(\lambda)>1/2$ for all $\lambda>0$. 
An unpublished argument of D'Achille and Curien shows that 
for Poisson-Voronoi percolation on $\Haa^d$ with $d\geq 3$, the uniqueness threshold 
satisfies $p_u(\lambda) \leq 1/2$ for all $\lambda>0$. 

Here we will show that $\inf_{\lambda>0} p_u(\lambda) > 0$ for Poisson-Voronoi percolation on $\Haa^d$ with $d\geq 3$.
This answers a question of Greb\'ik and Recke~\cite{GrebikRecke}.
\end{abstract}

\section{Introduction and statement of results.}

We consider Poisson-Voronoi percolation on the $d$-dimensional hyperbolic space $\Haa^d$.
That is, with each point $z$ of a Poisson point process $\Zcal$ of constant intensity $\lambda$ on $\Haa^d$ we associate its
{\em Voronoi cell}:

$$ C(z) := \left\{ x \in \Haa^d : \dist(x,z) \leq \dist(x,w) \text{ for all $w \in \Zcal$}\right\}, $$

\noindent
where $\dist(.,.)$ denotes the hyperbolic metric. (See Sections~\ref{sec:hyperbolicgeom},~\ref{sec:hyperPPP} for 
the precise definitions of hyperbolic Poisson point processes, hyperbolic distance, etc.)
We colour these Voronoi cells either black or white according to independent $p$-biased coin flips (black has probability $p$, 
white has probability $1-p$).
Figure~\ref{fig:sim} renders a computer simulation of this process on $\Haa^3$, depicted in the Poincar\'e halfspace 
model of $\Haa^3$.%
\begin{figure}[h!]
\begin{center}
\includegraphics[width=.7\textwidth]{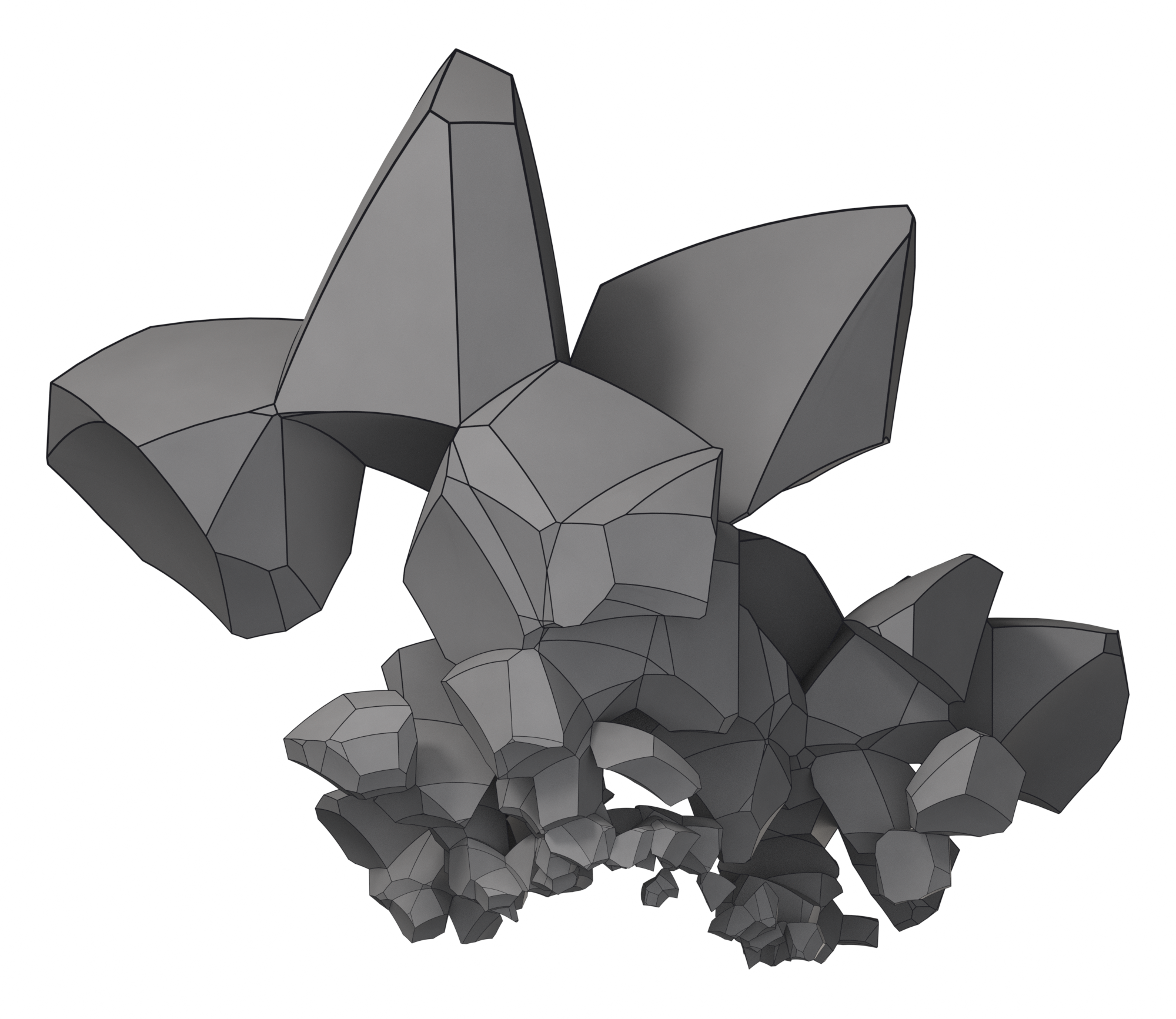}
 \caption{Computer simulation of Poisson-Voronoi percolation on $\Haa^3$.
 A single cluster is shown, depicted on the Poincar\'e halfspace model.
 \label{fig:sim}}
 \end{center}
\end{figure}
We say that two points $x,y \in \Haa^d$ (not necessarily belonging to $\Zcal$) 
belong to the same {\em (black) cluster}, denoted
$x \pijl y$, if there is a continuous curve connecting $x$ and $y$
that stays entirely in the union of the black Voronoi cells.
(Formally speaking, a black cluster is an inclusion maximal subset $A$ of the union of the black cells
with the property that $x\pijl y$ for all $x,y \in A$.)
We say that {\em percolation} occurs if there is an unbounded (wrt.~$\dist$) black cluster.
A pertinent quantity is the {\em critical probability for percolation}, defined as:

$$ p_c(\lambda) := \inf\left\{ p : \Pee_{p,\lambda}(\text{there exists an unbounded cluster})>0\right\}. $$

\noindent 
In the case of Poisson-Voronoi percolation on Euclidean space $\eR^d$, elementary considerations
show that the critical probability does not depend on $\lambda$. 
But, for Poisson-Voronoi percolation defined on other ambient geometric spaces, this is typically not the case.
In fact, as shown by Benjamini and Schramm~\cite{BS2001}, on the hyperbolic plane $p_c(\lambda)$ is strictly positive and 
tends to zero as $\lambda$ tends to zero. 
This same behaviour should also occur in $\Haa^d$ with $d\geq 3$, although it does not seem to have been worked out 
explicitly in the literature.\footnote{%
We believe that some of the arguments in the paper~\cite{HansenMuller2024} by
Hansen and the second author can be adapted to give that $p_c(\lambda) = O(\lambda)$ as $\lambda\searrow 0$ 
for Poisson-Voronoi percolation on $\Haa^d$ with any $d\geq 2$.
}

A more spectacular difference between Poisson-Voronoi percolation on Euclidean and hyperbolic space is the 
behaviour of the number of distinct unbounded clusters.
By well-known, classical arguments in percolation theory, Poisson-Voronoi percolation on $\eR^d$ has 
at most one unbounded cluster (almost surely, for any choice of $p,\lambda$).
In contrast, for Poisson-Voronoi percolation on some other ambient geometric spaces, including $\Haa^d$, there are 
choices of the parameters $p,\lambda$ for which there are {\em infinitely many, distinct, unbounded clusters} 
(almost surely).
A relevant quantity is the {\em critical value for uniqueness}, or {\em uniqueness threshold}, defined by:

$$ p_u(\lambda) := \inf\left\{ p : \Pee_{p,\lambda}(\text{there is exactly one unbounded cluster})>0 \right\}. $$

As might be intuitive to some readers, and was formally shown by Benjamini and Schramm~\cite{BS2001}, 
on the hyperbolic plane $\Haa^2$, we have $p_u(\lambda) = 1 - p_c(\lambda)$.
In particular $\lim_{\lambda\searrow 0} p_u(\lambda) = 1$.

In contrast, very recent results of Greb\'{\i}k and Recke~\cite{GrebikRecke} and D'Achille et al.~\cite{dAchilleVanishing}
establish that $\lim_{\lambda\searrow 0} p_u(\lambda) = 0$ for Poisson-Voronoi percolation 
defined over an ambient geometric space from a certain family of geometric spaces that includes 
the Cartesian products $\Haa^{d_1}\times\dots\times\Haa^{d_k}$ with 
$k, d_1,\dots,d_k \geq 2$.
A natural question, posed by Greb\'{\i}k and Recke~\cite{GrebikRecke}, is whether
we also have $\lim_{\lambda\searrow 0} p_u(\lambda) = 0$ for Poisson-Voronoi percolation on $\Haa^d$ with 
$d\geq 3$. We provide a negative answer to this question.

\begin{theorem}\label{thm:main}
For Poisson-Voronoi percolation on $\Haa^d$ with $d\geq 3$, we have $\displaystyle \inf_{\lambda > 0} p_u(\lambda) > 0$.
\end{theorem}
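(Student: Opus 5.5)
To prove Theorem~\ref{thm:main}, I will exhibit a $p_0>0$ such that for every $\lambda>0$ and every $p<p_0$ there is almost surely \emph{not} exactly one unbounded black cluster. Since non-uniqueness holds trivially when there is no unbounded cluster, it suffices to show: for $p<p_0$ (uniformly in $\lambda$), either there is no unbounded cluster, or there are infinitely many. The natural tool is the standard criterion for uniqueness via connection probabilities. By the Lyons--Schramm type indistinguishability, or more concretely by the ergodic/mass-transport arguments available for invariant percolation on $\Haa^d$, if there is a unique unbounded cluster then
$$\inf_{x,y\in\Haa^d}\Pee_{p,\lambda}(x\pijl y)>0 .$$
(For a unique infinite cluster $U$, $\Pee(x\pijl y)\ge \Pee(x\in U,\,y\in U)$, and Harris--FKG gives a lower bound of $\Pee(x\in U)^2=\theta^2$, since both events are increasing in the colouring given $\Zcal$ and one can average.) So my plan is to show that, for $p$ small but fixed and all $\lambda$, we have $\Pee_{p,\lambda}(x\pijl y)\to 0$ as $\dist(x,y)\to\infty$.

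The key step is a uniform-in-$\lambda$ exponential decay estimate of the form
$$\Pee_{p,\lambda}(x\pijl y)\le C\, e^{-c\,\dist(x,y)}$$
for $p<p_0$. The regimes of $\lambda$ behave very differently and I will treat them separately.

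\textbf{Large $\lambda$.} Here cells are small, and the model locally looks Euclidean. A black path from $x$ to $y$ has to traverse many cells, roughly $\lambda^{1/d}\dist(x,y)$ of them, and the usual path-counting argument in the cell adjacency graph applies. The number of self-avoiding cell paths of length $n$ is at most $K^n$, with a degree bound that holds in expectation uniformly in $\lambda$ by scaling. This gives decay once $p<1/K$.

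\textbf{Small $\lambda$.} This is the hard regime and the one that distinguishes $d\ge3$ from $d=2$. Cells are huge and have a tree-like structure. In $\Haa^2$, as $\lambda\to0$, a single black cell already carries ideal boundary points, which is why $p_c\to0$ and $p_u\to1$. My heuristic for $d\ge3$ is that the Voronoi tessellation as $\lambda\to0$ converges, after suitable description, to the ideal Poisson--Voronoi tessellation. In that limit, the event $x\pijl y$ for distant $x,y$ requires a chain of cells whose \emph{boundary-at-infinity footprints} link up. I will work in the halfspace model with $y$ near the boundary point $\infty$ and $x$ at a fixed point. I will then bound the probability that a black path exists by counting chains of cells crossing the annuli $\{2^k\le \text{height}\le 2^{k+1}\}$, rescaled by hyperbolic isometries (dilations), so that each annulus looks the same.

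The aim is to show that crossing each dyadic horospherical layer requires, with probability bounded below uniformly in $\lambda$, passing through at least one new cell among boundedly many candidates in expectation. This yields a factor of at most $Kp<1/2$ per layer, and hence decay like $(Kp)^{\dist(x,y)/\log 2}$. The crucial input is that in dimension $d\ge3$ the ideal cells have bounded ``combinatorial complexity per scale'': the expected number of cells meeting a given region of bounded hyperbolic size stays bounded as $\lambda\to0$. In dimension $2$ this fails in the relevant sense, because a single cell reaches infinity.

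I expect this uniform control of cell adjacency counts as $\lambda\to0$ to be the main obstacle. I would first try to establish it through explicit Poisson computations: bounding the expected number of Voronoi cells intersecting a hyperbolic ball of radius $1$, and the expected number of neighbours of a cell restricted to a layer, uniformly for $\lambda\in(0,\lambda_0]$. Interpolating between the two regimes by compactness in $\lambda$ then finishes the proof.
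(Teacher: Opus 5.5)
Your outer reduction is the paper's: by the criterion of Proposition~\ref{prop:pu} it suffices to show $\Pee_{p,\lambda}(x\pijl y)\to0$ for $p\le p_0$, and for $\lambda$ bounded away from $0$ one can use $p_u\ge p_c$ together with a uniform lower bound on $p_c$. The genuine gap is the small-$\lambda$ regime, which is the whole content of the theorem. There you only have a heuristic, and the mechanism it rests on fails.

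\emph{First}, a factor $Kp$ per dyadic layer cannot hold uniformly in $\lambda$, because one cell can carry a connection across many layers for a single factor $p$. Suppose $\lambda$ is small compared with $e^{-(d-1)\dist(x,y)}$. Consider nuclei $z$ at distance about $\frac{1}{d-1}\ln(1/\lambda)$ from $y$, within angle $e^{-\dist(x,y)/2}$ of the direction of $x$. The cosine rule gives $\dist(x,z)\le\dist(y,z)$, and the balls $B(x,\dist(x,z))$, $B(y,\dist(y,z))$ have volume $O(1/\lambda)$. The Mecke formula then yields $\Pee(x,y\text{ lie in one cell})\gtrsim e^{-\frac{d-1}{2}\dist(x,y)}$, hence $\Pee_{p,\lambda}(x\pijl y)\gtrsim p\,e^{-\frac{d-1}{2}\dist(x,y)}$. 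So no uniform rate growing like $\log(1/p)$ is possible, and any counting must allow heavy-tailed upward jumps.

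\emph{Second}, a nucleus has (heuristically) of order $\lambda^{-1}$ Delaunay neighbours, almost all of them \emph{below} it. A black path need not be monotone in height: it can dive towards $\partial\Haa^d$, use these many low neighbours to move horizontally, and climb back. Counting new cells per layer does not control such excursions, and controlling them uniformly in $\lambda$ is exactly the hard part. The paper does this in an auxiliary independent-edge model with kernel $P(z,w)=\varphi(\lambda e^{\frac{d-1}{2}\tdist(z,w)})$, where $\varphi(s)=s^{-2}$ for $s>s_0$ (the borderline $\gamma=1/2$ case of Gracar--L\"uchtrath--M\"orters). Paths are cut at their left-to-right height maxima. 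The increasing skeleton is computed exactly, $I_k(h)=(c_3p)^k\,\Pee(\Po(\frac{d-1}{2}\ln h)=k-1)$; its sum over $k$ decays like $h^{-\frac{d-1}{2}(1-c_3p)}$, consistent with the lower bound above. Each excursion below two consecutive maxima is paid for by $U_k(z,w)\le(c_5p)^kP(z,w)$ (Lemma~\ref{lem:U}).

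\emph{Third}, even with the right count, a union bound over chains of cells needs the \emph{joint} probability that a given sequence of nuclei forms a Delaunay path. This is not the product of the marginal adjacency probabilities, because the relevant empty regions overlap. First-moment quantities such as the expected number of cells meeting a unit ball, or of neighbours in a layer, say nothing about it. The paper handles this with:
the Gabriel-ball relaxation $E(z,w)$ and its doubly exponential tail (Lemma~\ref{lem:edgeprob});
its conditional version under small overlap (Corollary~\ref{cor:edgeprob});
the hyperbolic overlap estimate (Lemma~\ref{lem:elemgeo});
and reduced paths, which redistribute the cost of the surviving edges so that $R_n(h)\le S_n(h)$.
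Your plan has no substitute for any of this. The input you call crucial is easy, holds equally for $d=2$ (cells of the ideal tessellation are unbounded in every dimension), and bounds no path count. Note also that uniformity in $\lambda$ is needed only for the threshold $p_0$, not for a decay rate: the paper merely shows, for each fixed $\lambda\le 1$, that $\Pee_{p,\lambda}(o\pijl(0,\dots,0,h))\to0$.

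Smaller points:
\begin{itemize}
\item Conditioning on $\Zcal$ and averaging does not give $\theta^2$, since $\Ee[fg]\ge\Ee f\,\Ee g$ fails for general functions of $\Zcal$. Use Harris--FKG for the pair $(\Zcal_b,\Zcal_w)$ instead: the black region is increasing in $\Zcal_b$ and decreasing in $\Zcal_w$.
\item The Delaunay graph has unbounded degrees, so ``at most $K^n$ paths'' should be replaced by a coarse-graining argument as in Appendix~\ref{sec:pcpos}.
\item That argument covers all $\lambda\ge\lambda_0$ at once, which you need, since your compactness step would otherwise require continuity of $p_u$ in $\lambda$.
\end{itemize}
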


\noindent
We point out that in dimension $d=2$ this is also true: By the results of Benjamini and Schramm~\cite{BS2001},
$p_u(\lambda) = 1 - p_c(\lambda) > 1/2$ for all $\lambda>0$ when $d=2$.
We also mention that a clever, unpublished argument of D'Achille and Curien (personal communication, December 2025)
gives $p_u(\lambda) \leq 1/2$ for all $\lambda>0$ for Poisson-Voronoi percolation on $\Haa^d$ with $d\geq 3$.

\vspace{1ex}

{\bf Related work.} For context, we give a brief and necessarily incomplete overview of related
works.
Percolation theory is a vast and very active area of modern probability theory.
For an overview, we refer the reader to the monographs~\cite{BollobasRiordanboek,Grimmettboek}. 
Poisson-Voronoi tessellations (mostly in $d$-dimensional, Euclidean space) are one of the central models
studied in stochastic geometry, with a considerable history. See the monographs~\cite{SchneiderWeil,StoyanKendallMecke87} and the 
references therein.
While they have typically been studied in the Euclidean space $\eR^d$, by now there is a substantial number of works on 
Poisson-Voronoi tessellations over more general 
geometric spaces, including $\Haa^d$. See for example~\cite{Elliot2, Elliot, BudzinskiEtalCheeger, 
CalkaChapronEnriquez, dAchilleIdeal, dAchilleThaele, MellickEtal, Isokawa3d, Isokawa00, MellickIndistinguish}.
Recently, Poisson-Voronoi tessellations on hyperbolic $d$-space $\Haa^d$ have attracted 
considerable (and renewed) interest. There is not only a lot of activity in the stochastic geometry community, but 
Poisson-Voronoi tessellations are also used as a tool to prove results in other branches of mathematics
(see for example~\cite{BudzinskiEtalCheeger, MellickEtal}).

Percolation on Poisson-Voronoi tessellations was first studied by 
Zvavitch~\cite{Zvavitch} who proved that $p_c \geq 1/2$ in the Euclidean plane.
It took about a decade until Bollob\'as and Riordan~\cite{bollobas2006critical} were able to prove a matching upper bound, 
establishing that $p_c=1/2$ for Poisson-Voronoi percolation on $\eR^2$.
Several works have since contributed to a more detailed picture of Poisson-Voronoi percolation on the Euclidean plane. 
See for example~\cite{AhlbergBaldasso18,AhlbergEtAl16,Tassion16,Vanneuville19}.
For Poisson-Voronoi percolation on $\eR^d$ with $d$ arbitrary, upper and lower bounds on $p_c$ in terms of $d$ 
were given in~\cite{BalisterBollobas10,BalisterBollobasQuas05}
and in~\cite{Duminil19} it was shown that for $p<p_c$ the probability that the cluster of the origin 
has radius at least $n$ decays exponentially in $n$ while the probability that the origin is in 
an infinite cluster is at least $\text{const}\cdot (p-p_c)$ for $p>p_c$.

Poisson-Voronoi percolation on more general two and three dimensional manifolds 
was studied by Benjamini and Schramm in~\cite{BSconformal}, and 
Poisson-Voronoi percolation on the projective plane was studied by Freedman~\cite{Freedman97}.
Poisson-Voronoi percolation on $\Haa^2$ was first studied by Benjamini and Schramm 
in the influential paper~\cite{BS2001}. 
Their results include that $0<p_c(\lambda)<1/2$ and $p_u(\lambda)=1-p_c(\lambda)$ for all $\lambda>0$, that 
$\lambda \mapsto p_c(\lambda)$ is continuous, that $\lim_{\lambda\searrow 0} p_c(\lambda)=0$ and that 
for $p_c(\lambda)<p<p_u(\lambda)$ there are infinitely many unbounded clusters almost surely.
Hansen and the second author proved in~\cite{HansenMuller2022} that, for Poisson-Voronoi percolation 
on the hyperbolic plane, $p_c(\lambda)\to 1/2$ as $\lambda\to\infty$, 
and in~\cite{HansenMuller2024} that $p_c(\lambda) = (\pi/3)\cdot\lambda + o(\lambda)$ as $\lambda\searrow 0$.
This settled a conjecture, respectively an open problem, posed by Benjamini and Schramm~\cite{BS2001}.

A preprint of Li and Liu~\cite{LiYu} proves an analogue for $\Haa^d$ of the results for $\eR^d$ in~\cite{Duminil19}.
Recent preprints by Greb\'{\i}k and Recke~\cite{GrebikRecke} and D'Achille et al.~\cite{dAchilleVanishing}
show that $\lim_{\lambda\searrow 0}p_u(\lambda)=0$ for Poisson-Voronoi percolation defined on an ambient space
from a family of geometric spaces that includes Cartesian products $\Haa^{d_1}\times\dots\times\Haa^{d_k}$ with 
$k, d_1,\dots,d_k \geq 2$.
A recent preprint of B\"uhler et al.~\cite{BDRS25} states that, for Poisson-Voronoi percolation 
on an ambient geometric space from a certain family of geometric spaces that includes $\Haa^d$, we have 
$\lim_{\lambda\to\infty} p_u(\lambda) = \lim_{\lambda\to\infty} p_c(\lambda) = p_c(\eR^d)$, where
$p_c(\eR^d)$ denotes the critical probability for Poisson-Voronoi percolation on $\eR^d$.
This proves and extends a conjecture of Hansen and the second author~\cite{HansenMuller2024}.

\subsection{The structure of the paper.}

In the next section, we provide an informal overview of some of the ideas in our paper, to aid the reader 
in digesting it.
In Section~\ref{sec:prelim}, we list some notations, definitions and results from the literature that we will 
rely on in the rest of the paper.
Section~\ref{sec:setup} gives some more preparatory observations for the proofs. In Section~\ref{sec:edgeprob}
we provide an upper bound on the probability of two Voronoi cells being adjacent.
In Section~\ref{sec:gracarmodel}, we consider an auxiliary continuum percolation model on $\Haa^d$ and 
in particular we prove an upper bound on the expected number of a certain kind of paths in that model.
In Section~\ref{sec:redpath} we show that the bounds from the previous section also apply to 
a certain kind of substructures of the Poisson-Voronoi tiling that we call {\em reduced paths}.
In Section~\ref{sec:wrapitup} we combine the results from the previous sections
into a proof of Theorem~\ref{thm:main}.
In Section~\ref{sec:discuss} we add some concluding remarks and suggestions for further work.

In order not to disrupt the flow of the main 
arguments while at the same time keeping our paper as ``complete'' and self-contained as possible, 
we have placed a number of proofs in the Appendix.

\subsection{Informal sketch of the proofs and the intuitions guiding them.}

The overall plan for the proof of Theorem~\ref{thm:main} is to show that, when $p$ is a small constant 
dependent only on the dimension then, for all $\lambda>0$, we have 
$\Pee_{p,\lambda}(x \pijl y) \to 0$ as $\dist(x,y)$ tends to infinity. 
A well-known observation (stated as Proposition~\ref{prop:pu} below) tells us that this implies $p \leq p_u(\lambda)$.
Let us also mention that we can restrict ourselves to $\lambda$ less than some arbitrary positive constant
by relatively standard considerations involving $p_c(\lambda)$ (Proposition~\ref{prop:pcpos} below).

The {\em Poisson-Delaunay graph} is the graph whose vertex set is $\Zcal$ and where $z,w \in \Zcal$ are connected by an edge 
if the Voronoi cells $C(z), C(w)$ intersect. 
So Poisson-Voronoi percolation is the same as site percolation 
on the Delaunay graph. 

In this sketch as well as in most of the rest of the paper, it is convenient to work in the 
Poincar\'e halfspace model of $\Haa^d$. (See Section~\ref{sec:hyperbolicgeom} for the precise
definition and some relevant facts.) 
Quick ``back of the envelope'' calculations suggest that, at least for small $\lambda$, the (expected) 
number of neighbours in the Delaunay graph of a given point $z \in \Zcal$ grows like 
$\text{const} \cdot \lambda^{-1}$, but only constantly many neighbours lie above $z$ (in expectation).
Here and in the rest of the paper ``above'' refers to the value of the $d$-th coordinate in the halfspace model.
(We do not provide the mentioned back of the envelope calculations anywhere in the paper. We also do 
not flesh them out into more formal statements, since we will not rely on these observations
in any of the proofs.)

This suggests that for $p \geq c \cdot \lambda$ with $c$ a sufficiently large constant, we can already 
expect percolation to occur, but a typical infinite path starting travels ``downwards'' towards
$\partial\Haa^d = \eR^{d-1}\times\{0\}$.
On the other hand, even for much larger $p$, i.e.~when $p$ is at most a small constant, it should be hard to 
travel ``upwards''.
This in turn suggests that if $z,w$ are far apart but at roughly the same height, and $p$ is at most a small constant, 
it is unlikely that there is a path between $z$ and $w$.
Further credence for this idea is given by the observation that the geodesic (shortest curve) between $z,w$ is a circle segment 
that lies above $z,w$ and reaches much higher than $z,w$ if they are far apart and at roughly the same height. (See again Section~\ref{sec:hyperbolicgeom}
for precise statements etc.) 
It is of course possible that $z,w$ are connected by a black path that does not stay close to this geodesic, but 
this will make the path (much) longer in terms of the hyperbolic length, and presumably also the number 
of vertices on it should be much larger, etc.

One of the issues that makes Poisson-Voronoi percolation challenging is ``dependencies''. 
The events that $z,w$, respectively $z',w'$, are adjacent in the Delaunay graph are dependent, no matter
the positions of $z,w,z',w'$ in $\Haa^d$. (To make this from a vague into a more formal statement 
we can consider conditioning on $z,w,z',w' \in \Zcal$ in an appropriate sense.)

One of the first things that comes to mind when trying to decide what kind of behaviour to expect
is to first consider a simplified model, where the adjacencies are independent.
That is, to get an idea what to expect we consider a graph with vertex set $\Zcal$ and 
for each pair $z,w \in \Zcal$ we include the edge $zw$ with probability $P(z,w)$, independent of 
all other pairs. 
Here we could take $P$ to be the probability that $z,w$ are adjacent in the Delaunay graph, which is a
decaying function of $\dist(z,w)$. 
But, with foresight, instead we choose a $P$ that decays more slowly in $\dist(x,y)$, 
to create some wiggle room when we transfer the results back to the original model.

We are still just in the stage of fixing ideas, so for the moment let us simplify our life even further 
and just discard all points of $\Zcal$ below some height, say height 1.
That is, we have a graph with vertex set $\Zcal \cap (\eR^{d-1}\times [1,\infty))$ and edges 
are included independently according to $P(.,.)$.
If we project the points of $\Zcal \cap (\eR^{d-1}\times [1,\infty))$ onto the first $d-1$ coordinates then we 
obtain a homogeneous Poisson point process
$\Xcal$ on $\eR^{d-1}$ of intensity $\lambda/(d-1)$. We can think of the $d$-th coordinates as 
(independent) weights attached to the points of this Poisson point process, 
satisfying $\Pee( W > w ) = w^{-(d-1)}$ for $w\geq 1$.
The probability of two points $x, y \in \Xcal$ being connected now is a function of their weights $W_x$ and $W_y$ and the Euclidean 
distance $\norm{x-y}$. 

The model we've arrived at fits under the umbrella of {\em weighted random connection models}. 
Various sets of authors have introduced similar, but different, such models. For us the rather general model of Gracar et 
al.~\cite{GracarEtal} is convenient. 
For $P$ an appropriate function of $\norm{x-y}, W_x, W_y$ (see~\eqref{eq:Pdef} below for the 
precise definition), we have that $P(z,w)$ dominates the probability that $zw$ is an edge of the Poisson-Delaunay graph, and 
our model fits the framework of Gracar et al.
Specifically, the version of their model with the ``preferential attachment kernel'' and parameter
$\gamma=1/2$. 
(This choice of parameter happens to be a delicate ``border case'' where the second moment of the number of neighbours of 
a typical point switches from being finite to infinite. It corresponds to power-law exponent $\tau=3$ in language that 
is common in the complex networks community.)
Theorem 1.1 in~\cite{GracarEtal} tells us that in this version of the weight-dependent random connection model 
there is a non-trivial percolation threshold, that is independent of
$\lambda$. 

It does however not seem possible to use this result ``off the shelf''.
We wish to show that the probability that $z \pijl w$ is small when $\dist(z,w)$ is large. 
Even if a path between $z, w \in \eR^{d-1} \times [1,\infty)$ is
unlikely in the model where we discard $\Zcal \cap \left( \eR^{d-1}\times(0,1) \right)$, it is not obvious
that keeping these points cannot somehow make paths between $z,w$ (much) more likely.

To avoid this issue, we just revert back to the model defined on $\eR^{d-1}\times(0,\infty)$ with the same choice of $P$.
Luckily some relevant intermediate results of Gracar et al.~still hold true in this setting.
We consider the expected number $S_n(h)$ of paths between the reference
point $o=(0,\dots,0,1)$ and some (black) point in $\Zcal \cap \left( \eR^{d-1}\times [h,\infty) \right)$, that 
are ``shortcut free''. (Our choice of $P$ is such that $P(z,w)=1$ if the distance between
$z,w$ is small. A shortcut is an edge between non-consecutive points of the path that is included
automatically.)
Proposition~\ref{prop:Snh} below establishes a bound on $S_n(h)$ that works for all $0 < \lambda, p \leq 1$ and $h > 1$.
Among other things, its proof uses the Mecke formula (repeated as Theorem~\ref{thm:Mecke} below) 
and Lemmas 2.2 and 2.3 from~\cite{GracarEtal}.
The reason for considering $S_n(h)$ and not, for instance, just the expected number of paths of length $n$ between two 
far away vertices $z,w$ is that this latter expectation tends to infinity with $n$ 
(by calculations we again do not provide in the paper).


We now wish to leverage the bound on $S_n(h)$ in the ``independent edge model'' to 
show that the probability that there is a black path in the Poisson-Delaunay graph 
between $o$ and a black Poisson point in $\eR^{d-1}\times[h,\infty)$ is small.
If we compute the expected number of paths between $z$ and $w$ of length $n$ using the Mecke formula then we obtain an integral over $z_1,\dots,z_{n-1}$ wrt.~the hyperbolic volume measure 
(see Section~\ref{sec:hyperbolicgeom} below for the precise definition), whose integrand is the 
probability $\Pee(\text{$z_{i-1}z_i$ is an edge for $i=1,\dots,n$})$ 
(where for convenience we set $z_0:=z, z_n:=w$).
Because of dependencies this probability can be much larger than the product of the probabilities 
$\prod_{i=1}^n \Pee(\text{$z_{i-1}z_i$ is an edge})$ when $z_0,\dots,z_n$ are positioned unfavourably. 

As a first step towards resolving this issue, we use a more generous notion of adjacency that 
reduces geometric dependencies. 
The {\em Gabriel ball} $\BGab(z,w)$ of $z,w$ is the ball whose center is the 
midpoint of the geodesic between $z,w$ and whose radius is $\dist(z,w)/2$. We let $E(z,w)$ denote the 
event that the {\em white} points inside $\BGab(z,w)$ are consistent with $zw$ being an edge
of the Poisson-Delaunay graph. 
The probability of $E(z,w)$ decays rather fast with the distance $\dist(z,w)$, as shown by Lemma~\ref{lem:edgeprob} below.
As alluded to earlier, we've ensured that the function $P(z,w)$ that we've used in the ``independent edge model''
decays much slower than $\Pee( E(z,w) )$.
Note that $E(z,w), E(z',w')$ are independent whenever the corresponding Gabriel balls do not intersect. 
This does not make all the issues with dependencies go away of course. 
The probability that $z_0,\dots,z_n$ is a path can still be much larger than the product of the 
probabilities $\prod_{i=1}^n \Pee( E(z_{i-1},z_i) )$, if there 
is a lot of overlap between the Gabriel balls.

As the second step towards dealing with dependencies, we switch attention to what we call {\em reduced paths}.
Given a path $z_0,\dots,z_n$ in the Poisson-Delaunay graph, 
we create its reduced path iteratively as follows. Starting from $i=0$, if 
there is a shortcut $z_iz_{i+j}$, i.e.~$P(z_i,z_{i+j}) = 1$ and $j>1$, then 
we omit $z_{i+1},\dots,z_{i+j-1}$ from the path and start the next iteration at $z_{i+j}$. 
And, if $\BGab(z_{i-1},z_i)$ has a lot of overlap with the Gabriel balls of 
previous edges on the path, then we allow ourselves to skip forward to a vertex $z_{i+j}$ that is within a certain
distance of $z_i$, where the distance depends on $\dist(z_{i-1},z_i)$ and the amount of overlap.
(In order to give the reader a gist of the arguments without burdening them too much with details, we've 
been a bit imprecise here. See Section~\ref{sec:redpath} for the full definition and arguments.)
We now let $R_n(h)$ be the analogue for reduced paths of $S_n(h)$.
That is, $R_n(h)$ is the expected number of reduced paths between $o$ and some black point in 
$\Zcal \cap \left(\eR^{d-1}\times[h,\infty)\right)$ such that all but the last point have height $<h$.
Proposition~\ref{prop:Rh} below shows that $R_n(h) \leq S_n(h)$ for all $p\leq 1/2$ and 
$0<\lambda \leq 1$.
For any reduced path $z_0,\dots,z_k$, some of its consecutive pairs of vertices were also consecutive on the 
original path, and in particular the corresponding events $E(.,.)$ must hold.
The main step in the proof of Proposition~\ref{prop:Rh} is to show that the probability of 
$E(.,.)$ holding for these remaining original edges is no more than the product $\prod_{i=1}^k P(z_{i-1},z_i)$
over all edges of the reduced path.
Since a reduced path is short-cut free by construction, the bound $R_n(h) \leq S_n(h)$ then follows
(invoking the Mecke formula once more).

In order to achieve this main step in the proof, having chosen the definition of reduced paths carefully, we 
exploit the fact that $P(E(z,w)) \ll P(z,w)$ as well as the 
curious behaviour of overlapping balls in hyperbolic geometry, to ``redistribute the costs'' of the 
remaining original edges over all the edges of the reduced path. 
This part of the paper is probably the most technical, but also the most inventive. We consider it the most novel mathematical contribution of our paper. 

Having established that $R_n(h) \leq S_n(h)$, our upper bound on $S_n(h)$ shows that, 
for $p \leq p_0$ and $0\leq \lambda\leq 1$ where $p_0>0$ is a constant dependent only on the dimension, 
we have $\sum_n R_n(h) \to 0$ as $h\to\infty$.
Fairly elementary additional arguments then allow us to 
show that $\Pee( o \pijl (0,\dots,0,h) ) \to 0$ as $h\to\infty$, for all $p\leq p_0$ and 
$0\leq \lambda\leq 1$. This implies $p_u(\lambda) \geq p_0$ for all $0 < \lambda \leq 1$ by earlier remarks.
As mentioned before, a constant, positive lower bound on $p_u(\lambda)$ for $\lambda \geq 1$ follows by 
fairly standard considerations involving $p_c(\lambda)$.

\section{Notation and preliminaries.\label{sec:prelim}}

We will use $\Bi(n,p)$ to denote the binomial distribution with parameters $n$ (the number of coins) and $p$ (the success 
probability) and $\Po(\nu)$ will denote the Poisson distribution with parameter (mean) $\nu$.
We'll make use of the following Chernoff-type bound.
A proof can for instance be found in~\cite{Penroseboek} (Lemmas 1.1 and 1.2). 

\begin{lemma}[Chernoff bound]\label{cher}
If $X$ is either Poisson or binomially distributed it holds for all $0\leq x\leq \Ee X$  
$$ \Pee(X\leq x)\leq e^{-H(x/\Ee X)\cdot\Ee X}, $$

\noindent
where $H(a)=1-a+a\ln(a) $ and $H(0)=1$.
\end{lemma}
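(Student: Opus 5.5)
This is the standard Chernoff bound for the lower tail, so I would prove it by the exponential Markov inequality applied to $e^{-tX}$ and then optimize over $t$. Write $\mu := \Ee X$. For any $t\geq 0$, Markov's inequality applied to the nonnegative variable $e^{-tX}$ gives
$$ \Pee(X\leq x) = \Pee\left(e^{-tX}\geq e^{-tx}\right) \leq e^{tx}\,\Ee e^{-tX}. $$
The first step is to bound the Laplace transform. If $X\isd\Po(\mu)$, then $\Ee e^{-tX} = \exp\left(\mu(e^{-t}-1)\right)$ exactly. If $X\isd\Bi(n,q)$ with $nq=\mu$, then
$$ \Ee e^{-tX} = \left(1+q(e^{-t}-1)\right)^n \leq \exp\left(nq(e^{-t}-1)\right)=\exp\left(\mu(e^{-t}-1)\right), $$
using $1+u\leq e^u$. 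So in both cases
$$ \Pee(X\leq x)\leq \exp\left(tx + \mu(e^{-t}-1)\right) \qquad \text{for all } t\geq 0. $$

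The second step is to optimize over $t$. Suppose first that $0<x\leq\mu$, and put $a:=x/\mu\in(0,1]$. The exponent $tx+\mu(e^{-t}-1)$ has derivative $x-\mu e^{-t}$, so it is minimized at $e^{-t}=a$, that is, at $t=-\ln a\geq 0$. This $t$ is admissible precisely because $x\leq \mu$. Substituting gives the exponent
$$ -x\ln a + \mu(a-1) = -\mu\left(a\ln a - a + 1\right) = -H(a)\,\mu, $$
which is the claimed bound.

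The boundary case $x=0$ is the only small subtlety. Here I would either let $t\to\infty$, so that the bound tends to $e^{-\mu}=e^{-H(0)\mu}$, or check it directly. For the Poisson case $\Pee(X=0)=e^{-\mu}$. For the binomial case $\Pee(X=0)=(1-q)^n\leq e^{-nq}=e^{-\mu}$. This is consistent with the convention $H(0)=1$, which is the continuous extension of $H$ since $a\ln a\to 0$ as $a\searrow 0$. The case $\mu=0$ is trivial, since then $x=0$ and the bound reads $\Pee(X\leq 0)\leq 1$. There is no real obstacle in the argument; the only points needing care are checking that the optimizing $t$ is nonnegative and handling $x=0$.
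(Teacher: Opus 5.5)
Your proof is correct. It is the standard Chernoff argument: apply Markov's inequality to $e^{-tX}$, use $1+u\le e^u$ to bound the binomial Laplace transform by the Poisson one, and choose $e^{-t}=x/\Ee X$; you also handle $x=0$ correctly via $t\to\infty$. The paper does not prove this lemma itself but cites Penrose (Lemmas 1.1 and 1.2), and as far as I recall those follow essentially this same exponential-moment route.
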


\subsection{Some (random) Euclidean geometry.}

We denote by $S^{d-1} \subseteq \eR^d$ the $(d-1)$-dimensional unit sphere (in ambient $d$-dimensional space).
We will use $\kappa_d$ to denote the $d$-dimensional volume of the $d$-dimensional unit ball 
(wrt.~the plain vanilla metric and volume in the ambient Euclidean space $\eR^d$); and we will use 
$\omega_{d-1}$ to denote the ``$(d-1)$-dimensional area'' of $S^{d-1}$. That is,

\begin{equation}\label{eq:volballEucl} 
\kappa_d := \frac{\pi^{d/2}}{\Gamma(d/2+1)}, \quad 
\omega_{d-1} := \frac{2 \pi^{d/2}}{\Gamma(d/2)}, 
\end{equation}

\noindent
where $\Gamma(.)$ denotes the gamma function. For a proof that these constants indeed 
correspond to the stated quantities, see for example Corollaries 15.14 and 15.15 in~\cite{Schilling}.

The {\em (positive) cone} of opening $\alpha$ around the vector $v \in \eR^d \setminus \{\orig\}$ is defined as

$$ \kone(v,\alpha) := \{ w \in \eR^d : \angle v\orig w < \alpha \}, $$

\noindent
A {\em linear halfspace} is a set of the form 

$$ L := \{ x \in \eR^d : v^t x > 0 \} = \kone(v,\pi/2), $$

\noindent 
with $v \in \eR^d \setminus \{\orig\}$.

We'll say a random vector $X$ on $\eR^d$ is {\em rotationally invariant} if $X \isd A(X)$ for every 
orthogonal map $A : \eR^d \to \eR^d$.

We'll make use of the following well-known fact.

\begin{lemma}\label{lem:kap} 
There exists a constant $c=c(d)$ such that 
for any $v \in \eR^d \setminus \{\orig\}$ and $0 \leq \alpha \leq \pi$, we have

$$  \Pee( X \in \kone(v,\alpha) ) \leq c \cdot \alpha^{d-1}, $$

\noindent 
for every atom-free, rotationally invariant $X$.
\end{lemma}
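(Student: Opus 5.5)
The plan is to reduce to the uniform distribution on the sphere and then bound the area of a spherical cap. Since $X$ is atom-free, $\Pee(X=\orig)=0$. On the event $X\neq\orig$, membership in $\kone(v,\alpha)$ depends only on the direction $U:=X/\norm{X}\in S^{d-1}$. If $X$ is rotationally invariant, then so is $U$: for any orthogonal $A$ we have $A(X)/\norm{A(X)} = A(U)$. Conditional on $X\neq\orig$, the law of $U$ is therefore a rotationally invariant probability measure on $S^{d-1}$. The uniform (normalised surface) measure $\sigma$ is the unique such measure, by uniqueness of Haar measure on the homogeneous space $S^{d-1}=O(d)/O(d-1)$. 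If one prefers to avoid quoting that, one can argue directly: averaging the law of $U$ over Haar measure on $O(d)$ leaves it unchanged, and the average is $\sigma$. Hence
$$\Pee(X\in\kone(v,\alpha)) \le \sigma\bigl(\{u\in S^{d-1} : \angle(u,v)<\alpha\}\bigr),$$
with equality when $X$ has no mass at the origin. Note that this cap probability does not depend on $v$.

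Next, compute the cap measure in polar angle $\theta$ measured from $v/\norm{v}$. The surface element is $\omega_{d-2}\sin^{d-2}\theta\,\dd\theta$ times the area element of the $(d-2)$-sphere. This gives
$$\sigma(\text{cap of angle }\alpha) = \frac{\omega_{d-2}}{\omega_{d-1}}\int_0^{\alpha}\sin^{d-2}\theta\,\dd\theta \le \frac{\omega_{d-2}}{\omega_{d-1}}\int_0^\alpha \theta^{d-2}\,\dd\theta = \frac{\omega_{d-2}}{(d-1)\,\omega_{d-1}}\,\alpha^{d-1},$$
using $\sin\theta\le\theta$ for $\theta\ge0$. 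So $c(d)=\omega_{d-2}/((d-1)\omega_{d-1})$ works for all $0\le\alpha\le\pi$.

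The case $d=1$ is degenerate. There $\alpha^{0}=1$, and any $c\ge1$ works trivially, so one can take $c=\max(1,\cdot)$ to cover it uniformly. An alternative that avoids the polar-coordinate formula is the following. The cap of angle $\alpha\le\pi/2$ is contained in the region $\{u\in S^{d-1}: \norm{u-v/\norm{v}}\le\alpha\}$, whose surface measure is at most a constant times $\alpha^{d-1}$ by a standard covering or volume comparison. For $\alpha>\pi/2$ the bound is trivial once $c\ge(2/\pi)^{d-1}$.

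The only mildly delicate step is the first one, namely identifying the direction law with the uniform measure, that is, the uniqueness of the rotation-invariant probability measure on the sphere. Everything else is a one-line integral estimate.
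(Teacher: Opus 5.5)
Your proof is correct and follows essentially the route behind the paper's treatment. The paper gives no argument of its own. It cites Lemma~2.1 of Brieden et al.\ as a more detailed estimate on cone/cap measure, and leaves implicit the passage from a general rotationally invariant $X$ to the uniform case, which is the same normalization $X/\|X\|$ the paper uses after Wendel's formula. Your reduction plus the polar-angle integral with $\sin\theta\le\theta$ is a self-contained version of exactly this. One minor wording slip: the surface element is $\sin^{d-2}\theta\,d\theta$ times the area element of $S^{d-2}$, and the factor $\omega_{d-2}$ only appears after integrating over $S^{d-2}$; your displayed formula already reflects this correctly.
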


For a proof, see for instance Lemma 2.1 in~\cite{BriedenEtal}, which is a more detailed result that 
implies the bound we need.

Another well-known result we'll make use of is Wendel's formula.

\begin{lemma}[Wendel's formula~\cite{Wendel62}]\label{lem:wendel}
Let $X_1,\dots,X_n$ be i.i.d.~from some atom-free, rotationally invariant distribution. Then

$$ \Pee\left(\begin{array}{l}
\text{there is some linear half-space $L$}\\
\text{such that $X_1,\dots, X_n \in L$}\end{array}\right)
=\Pee(\emph{\text{Bi}}(n-1,1/2)\leq d-1 ). $$

\end{lemma}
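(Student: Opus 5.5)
The plan is the classical sign-symmetrisation argument combined with a count of the regions of a central hyperplane arrangement. First I would reduce to general position. Since $X$ is atom-free, $\Pee(X=\orig)=0$. By rotational invariance, conditionally on $\norm{X}$ the direction $X/\norm{X}$ is uniform on $S^{d-1}$, so $X$ lies in any fixed linear hyperplane with probability zero. Conditioning on $X_1,\dots,X_{k-1}$ with $k\le d$, the span of these points is contained in a fixed hyperplane, so almost surely any $\min(n,d)$ of the $X_i$ are linearly independent. I would also note that for $n\le d$ both sides equal $1$, so the interesting case is $n>d$; the general-position statement is all the argument needs.

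Next comes symmetrisation. Let $\eps_1,\dots,\eps_n$ be i.i.d.\ uniform signs in $\{\pm1\}$, independent of the $X_i$. Since $-X\isd X$ (take $A=-I$), the vector $(\eps_1X_1,\dots,\eps_nX_n)$ has the same law as $(X_1,\dots,X_n)$. Hence the probability in the lemma equals
$$2^{-n}\,\Ee\big[N(X_1,\dots,X_n)\big],$$
where $N(x_1,\dots,x_n)$ is the number of sign vectors $\eps\in\{\pm1\}^n$ for which some linear halfspace $\{y: v^ty>0\}$ contains all $\eps_ix_i$. Such a $v$ satisfies $\eps_i v^tx_i>0$ for every $i$, that is, $v$ lies in the open cell of the arrangement of hyperplanes $x_1^\perp,\dots,x_n^\perp$ with sign pattern $\eps$. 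Conversely, every cell of the arrangement determines exactly one such sign pattern. So $N$ is exactly the number of regions of this central arrangement.

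The main step, and the only real obstacle, is to show that for $x_1,\dots,x_n$ in general position this number is
$$C(n,d)=2\sum_{k=0}^{d-1}\binom{n-1}{k},$$
independently of the configuration. I would prove this by induction on $n$, using the recursion $C(n,d)=C(n-1,d)+C(n-1,d-1)$. Adding the hyperplane $H=x_n^\perp$ splits into two exactly those existing regions that $H$ meets. These correspond bijectively to the regions of the induced arrangement $\{x_i^\perp\cap H\}_{i<n}$ inside $H\cong\eR^{d-1}$, and that induced arrangement is still central and in general position, again almost surely. The base cases are $C(1,d)=2$ for $d\ge1$ and $C(n,1)=2$. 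The closed form then follows from Pascal's rule $\binom{n-1}{k}=\binom{n-2}{k}+\binom{n-2}{k-1}$. The care needed here is in checking that general position is inherited by the restricted arrangement, which holds because the normal vectors projected onto $H$ stay in general position whenever any $d$ of the original $x_i$ are independent.

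Finally, since $N$ is almost surely constant, the probability equals
$$2^{-n}\cdot 2\sum_{k=0}^{d-1}\binom{n-1}{k}=\sum_{k=0}^{d-1}\binom{n-1}{k}2^{-(n-1)}=\Pee(\Bi(n-1,1/2)\le d-1),$$
which is the claimed formula.
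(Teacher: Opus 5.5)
Your argument is correct, but it takes a different route from the paper. The paper gives no proof of its own: it cites Wendel's theorem for the uniform distribution on $S^{d-1}$ and transfers it to a general atom-free rotationally invariant law by passing to $X_i/\norm{X_i}$. That vector is uniform on the sphere and lies in a linear halfspace exactly when $X_i$ does.

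You instead reprove the formula from scratch, by what is essentially Wendel's original argument. Sign-flip symmetrisation reduces the probability to $2^{-n}$ times the number of regions of the central arrangement $x_1^\perp,\dots,x_n^\perp$. That region count is then the Schl\"afli--Cover number $2\sum_{k=0}^{d-1}\binom{n-1}{k}$, obtained via deletion--restriction. Your check that general position passes to the restricted arrangement in $H=x_n^\perp$ is the right one: if $d-1$ vectors together with $x_n$ are linearly independent, their projections onto $H$ remain independent.

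Working directly with the $X_i$ makes the normalisation step unnecessary. It also shows that rotational invariance enters only through $X\isd -X$ and the fact that $X$ charges no linear hyperplane. So your proof gives the formula for any law with these two properties, not only rotationally invariant ones. The paper's route buys brevity at the price of relying on the reference.
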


\noindent
(Wendel's result is stated in~\cite{Wendel62} for the uniform distribution on $S^{d-1}$. 
The version for general rotationally invariant and atom-free distributions follows
by considering the renormalized vectors $X_i/\norm{X_i}$.)

\subsection{Hyperbolic $d$-space.\label{sec:hyperbolicgeom}}

The $d$-dimensional hyperbolic space is a natural generalization of the hyperbolic plane to general dimension $d$.
It can be defined as the (unique up to isometry) simply connected, $d$-dimensional Riemannian manifold of constant
sectional curvature $-1$. (For the definitions of the terms involved, see for example~\cite{Ratcliffe}. Their meaning will not play
any role in the rest of the paper.)
Hyperbolic $d$-space has several models (``coordinate maps''), including the hyperboloid model, the Poincar\'e halfspace model, 
the Poincar\'e ball model and the Klein ball model, all defined analogously to the eponymous models of the hyperbolic plane.
See for example~\cite{CannonFloydKenyonParry1997} or~\cite{Ratcliffe} for a thorough treatment. 

We shall be working exclusively in the Poincar\'e halfspace model and the Poincar\'e ball model of hyperbolic $d$-space.
In fact, we will mostly use the Poincar\'e halfspace model, and only occasionally switch to 
the Poincar\'e ball model when it is convenient for the argument in question. 
We briefly introduce these two models and list the properties and facts we shall need in the rest of the paper.

In the Poincar\'e halfspace model, we equip the open halfspace $\Haa^d := \eR^{d-1} \times (0,\infty)$ with the metric
defined by stipulating that the hyperbolic length of a (differentiable) curve $f : [0,1] \to \Haa^d$
is given by 

\begin{equation}\label{eq:metricdef} 
\text{length}_{\Haa^d}(f) = \int_0^1 \frac{\norm{f'(t)}}{f_d(t)}{\dd}t. 
\end{equation}

Naturally the {\em hyperbolic distance} $\dist(z,w)$ between $z,w \in \Haa^d$ is defined as the 
length of the shortest curve connecting $z,w$.
It can be written explicitly as

\begin{equation}\label{eq:distdef} 
\dist(z,w) = 2 \arcsinh\left(\frac{\norm{z-w}}{2\sqrt{z_dw_d}}\right). 
 \end{equation}

\noindent
For a proof, see Theorem 4.6.1 in~\cite{Ratcliffe}. 
The curve of shortest hyperbolic length between two points $z, w \in \Haa^d$ is called a {\em geodesic}.
Geodesics satisfy the following visually pleasing property.

\begin{equation}\label{eq:factgeodesicH}
\begin{array}{l}
\text{The geodesic between $z, w \in \Haa^d$ is a line segment if $z,w$ differ only in the $d$-th}\\
\text{coordinate, and otherwise it is the segment of a circle through $z,w$ that hits the}\\
\text{bounding hyperplane $\partial\Haa^d = \eR^{d-1}\times\{0\}$ at right angles.}
\end{array}
\end{equation}

\noindent
For a proof, see for example~\cite[Theorem 9.3]{CannonFloydKenyonParry1997}.
Throughout the paper, we fix the natural reference point $o := (0,\dots,0,1) $ in the halfspace model. We may sometimes refer to $o$ as ``the origin''.
We denote the ball wrt.~$\dist(.,.)$ around $x \in \Haa^d$ of radius $r > 0$ by:

$$ B(x,r) := \left\{ y \in \Haa^d : \dist(x,y) < r \right\}. $$

\noindent 
In a few places in the paper, we will want to also consider balls with respect to the Euclidean metric. 
To avoid any possible confusion, we will always denote balls with respect to the Euclidean metric by: 

$$ B_{\eR^d}(x,r) := \left\{ y \in \eR^d : \norm{x-y}<r \right\}. $$

\noindent
For us an important observation is:

\begin{equation}\label{eq:fact} 
\begin{array}{l} 
\text{Every ball wrt.~$\dist$ is also a Euclidean ball, and every Euclidean ball} \\
\text{contained in $\Haa^d$ and not tangent to $\partial\Haa^d$ is also a ball wrt.~$\dist$.}
\end{array} 
\end{equation}

\noindent 
(But the centers and radii wrt.~the different metrics do not coincide.)
For a proof see for instance Fact 1 in~\cite{CannonFloydKenyonParry1997} or Theorem 4.6.4 
in~\cite{Ratcliffe}.

If $a,b,c \in \Haa^d$ are distinct, and $\gamma$ denotes the angle 
that the geodesic line segment between $a,c$ and the geodesic line segment between $b,c$ make at their intersection point $c$, the 
{\em hyperbolic cosine rule} states that:

\begin{equation}\label{eq:hypercosrule} 
\begin{array}{rcl} 
\cosh(\dist(a,b)) & = & \cosh(\dist(a,c))\cdot \cosh(\dist(b,c)) \\
 & & - \cos(\gamma)\cdot\sinh(\dist(a,c))\cdot\sinh(\dist(b,c)).
\end{array}
\end{equation}

\noindent 
For a proof see~Theorem 3.5.3 in~\cite{Ratcliffe}. 
(The proof in~\cite{Ratcliffe} is phrased for dimension $d=2$ only. In dimension $d\geq 3$
we can apply an isometry to achieve the situation where $a,b,c \in \eR \times \{0\}^{d-2}\times (0,\infty)$, 
a subspace of $\Haa^d$ which is isometric to the hyperbolic plane $\Haa^2$. The earlier 
remarks on geodesics imply that the geodesics involved will stay inside this subspace.)

The Poincar\'e halfspace model is equipped with a natural notion of volume that is implied 
by the choice of metric. (See Theorem 4.6.7 in \cite{Ratcliffe}.)
The hyperbolic volume of a (Borel) measurable set $A \subseteq \Haa^d$ is given by:

$$ \mu(A) = \int_A (1/z_d)^d{\dd}z, $$

\noindent
For $f : A \to \eR$ a (Borel) measurable function, the integral 
with respect to the hyperbolic volume measure is given by:

$$ \int_A f(z) \mu({\dd}z) := \int_A (f(z)/z_d^d){\dd}z. $$

\noindent 
The hyperbolic volume of a ball of radius $r$ equals:

\begin{equation}\label{eq:volball} 
\mu\left( B(x,r) \right) = \omega_{d-1} \cdot \int_0^r \left(\sinh s\right)^{d-1} {\dd}s. 
\end{equation}

\noindent
(See for example~\cite{Ratcliffe}, Exercise 3.4(6).)
For convenient future reference we mention the following straightforward consequences of~\eqref{eq:volball}.

\begin{equation}\label{eq:volballUB}
 \mu\left( B(x,r) \right) \leq \frac{\omega_{d-1}}{2^{d-1}(d-1)} \cdot e^{(d-1)r},
\end{equation}

\begin{equation}\label{eq:asympvolball}
 \mu\left( B(x,r) \right) = (1+o_r(1)) \cdot \frac{\omega_{d-1}}{2^{d-1}(d-1)} \cdot e^{(d-1)r}. 
\end{equation}

\begin{equation}\label{eq:volballrminuss}
\mu( B(x,r-s) ) \leq e^{-(d-1)s} \cdot \mu( B(x,r) ).
\end{equation}

\noindent 
For completeness we provide derivations of~\eqref{eq:volballUB},~\eqref{eq:asympvolball} and~\eqref{eq:volballrminuss} in
Appendix~\ref{sec:ballrminuss}. 

\vspace{1ex}

The {\em Poincar\'e ball model} is the $d$-dimensional unit ball $\Bee^d := B_{\eR^d}(\orig,1)$
equipped with a certain metric and volume measure.
One way to define these is to declare that the $d$-dimensional Cayley transform\footnote{The $d$-dimensional
Cayley transform $\varphi : \Haa^d \to \Bee^d$
is given by $\varphi_i(x) = 2x_i/(\norm{x}^2+2x_d+1)$ for $1\leq i \leq d-1$ and
$\varphi_d(x) = (\norm{x}^2-1)/(\norm{x}^2+2x_d+1)$.
The two dimensional version can be identified with the M\"obius 
transform $z \mapsto i\cdot(z-i)/(z+i)$.} 
$\varphi : \Haa^d \to \Bee^d$ be an isometry, and let the metric and volume measure carry over from the Poincar\'e halfspace model.
That is, for $z, w\in \Bee^d$, respectively $A \subseteq \Bee^d$ a (Borel) measurable set, 
we define

$$ \text{dist}_{\Bee^d}(z,w) := \dist(\varphi^{-1}(z),\varphi^{-1}(w) ), \quad  
\mu_{\Bee^d}(A) := \mu( \varphi^{-1}(A) ). $$

\noindent
Via a standard computation (involving the Jacobian of $\varphi$) this gives:

\begin{equation}\label{eq:Poincareballvolmeas} 
\mu_{\Bee^d}(A) = \int_A \frac{2^d}{(1-\norm{z}^2)^d}{\dd}z. 
\end{equation}

\noindent
(An alternative derivation is given in~\cite{Ratcliffe}, Theorem 4.5.6.)
The angle between two $\Haa^d$-geodesics $\gamma,\gamma'$ 
meeting in a common point $p \in \Haa^d$ is the same as the 
angle that the corresponding $\Bee^d$-geodesics $\varphi[\gamma], \varphi[\gamma']$ make in $\varphi(p) \in \Bee^d$. This identity can also be found in \cite[Section 4.6, paragraph following Corollary~3]{Ratcliffe}.
In particular the hyperbolic cosine rule also holds in $\Bee^d$.
What is more, analogously to~\eqref{eq:factgeodesicH} above:%
\begin{equation}\label{eq:factball}
\begin{array}{l}
\text{The geodesic between $z,w \in \Bee^d$ is a straight line segment if $z,w$ lie on a line}\\
\text{through the origin $\orig$ and otherwise it is the segment of a circle through $z,w$}\\
\text{that meets $\partial\Bee^d = S^{d-1}$ at right angles.}
\end{array}
\end{equation}

\noindent
(For a proof, see for example Corollary 2 on Page 123 of~\cite{Ratcliffe}.)
And, analogously to~\eqref{eq:fact}:

\begin{equation}\label{eq:factball2}
\begin{array}{l}
\text{Any ball wrt.~$\dist_{\Bee^d}$ is also a ball wrt.~the Euclidean metric and any Euclidean}\\ 
\text{ball contained in $\Bee^d$ and not tangent to $S^{d-1}$ is a ball wrt.~$\dist_{\Bee^d}$.}
\end{array}
\end{equation}

\noindent
For a proof see for example Fact 1 in~\cite{CannonFloydKenyonParry1997} or Theorem 4.5.4 in~\cite{Ratcliffe}.

Hyperbolic $d$-space is {\em homogeneous}, meaning that for every $p, q \in \Haa^d$ there is an isometry
from $\Haa^d$ to itself that maps $p$ to $q$. The same is of course true in $\Bee^d$.
(For a proof see for example Corollary 4 in Section 3.2 of \cite{Ratcliffe}.)
Moreover, all isometries from $\Haa^d$ to itself, respectively $\Bee^d$ to itself, preserve hyperbolic volume. 
(For a proof, see for example the paragraph following formula (3.4.6) in~\cite{Ratcliffe}.)

\subsection{Hyperbolic Poisson point processes.\label{sec:hyperPPP}}

Throughout this paper $\Zcal$ denotes a homogeneous Poisson point process (PPP) on $d$-dimensional hyperbolic space. 
Analogously to homogeneous Poisson point processes on $\eR^d$, a homogeneous Poisson process $\Zcal$ of intensity 
$\lambda$ on $\Haa^d$ is characterized completely by the properties that
{\bf a)} for each (Borel measurable) set $A\subseteq \Haa^d$ with $\mu(A)<\infty$ the random variable $|\Zcal \cap A|$ is Poisson distributed with mean 
$\lambda \cdot \mu(A)$, and {\bf b)} if $A_1, \dots, A_m \subseteq \Haa^d$ are (Borel measurable and) disjoint then the 
random variables $|\Zcal\cap A_1|, \dots, |\Zcal\cap A_m|$ are independent.
In particular, we can just view $\Zcal$ as an {\em inhomogeneous} Poisson point 
process on the $\eR^d$ with intensity function $u \mapsto \lambda \cdot 1_{\Haa^d}(u) \cdot u_d^{-d}$.
Or, equivalently, in case we work in the Poincar\'e ball model then 
we have an inhomogeneous Poisson point process on $\eR^d$ of intensity
$u \mapsto \lambda \cdot 1_{\Bee^d}(u) \cdot 2^d / (1-\norm{u}^2)^d$.
(This latter PPP $\Ycal$ on $\Bee^d$ we can obtain from the former PPP $\Zcal$ on $\Haa^d$ as $\Ycal = \varphi[\Zcal]$  
where $\varphi$ is the canonical -- or any other -- 
isometry $\varphi : \Haa^d \to \Bee^d$, 
by the mapping theorem for Poisson processes. See for example~\cite{Kingman}, page 18.) 

Throughout the paper, we attach to each point of $\Zcal$ a randomly and independently chosen
colour. (Black with probability $p$ and white with probability $1-p$.)
We let $\Zcal_b$ denote the black points and $\Zcal_w$ the white points of $\Zcal$.
By the marking and thinning theorems for Poisson point processes (see for example~\cite{LastPenrose2017}, Theorem 5.6 and Corollary 5.9),
$\Zcal_b, \Zcal_w$ are {\em independent} hyperbolic 
homogeneous Poisson point processes with intensities $\lambda \cdot p$, respectively $\lambda \cdot (1-p)$.
To emphasize the distinction between the points of $\Zcal$ and other points of $\Haa^d$ we will sometimes
refer to the points of $\Zcal$ as the {\em nuclei} (of the Voronoi cells).

We will rely heavily on a specific case of the Mecke formula. The Mecke formula is sometimes also called 
Campbell-Mecke formula or Slivniak-Mecke formula in the literature. 
It is our weapon of choice
for counting $k$-tuples of points $z_1,\dots,z_k \in \Zcal_b$ satisfying some desired property.
Before stating it, we remind the reader that formally speaking a Poisson process on $\eR^d$ 
is a random variable that takes values in the space $\Omega_{\text{PPP}}$ of locally finite subsets of $\eR^d$, equipped 
with the sigma algebra generated by the family of events of the form: a given Borel set $B$ 
contains precisely $k$ points.

\begin{theorem}[Mecke formula]\label{thm:Mecke}
For $k \in \eN,\lambda>0$ and $0\leq p\leq 1$, let $\Zcal_b,\Zcal_w$ be as above and let
$g : (\Haa^d)^k \times \Omega_{\text{PPP}} \times \Omega_{\text{PPP}} \to [0,\infty)$ be a nonnegative, measurable function.
Then

$$ \begin{array}{c} 
\displaystyle
\Ee\left[\sum_{\substack{z_1,...,z_k\in\Zcal_b\\ \text{distinct}}} g(z_1,...,z_k,\Zcal_b,\Zcal_w)\right] \\
= \\
\displaystyle 
\left(p\lambda\right)^k \int_{\Haa^d}\dots\int_{\Haa^d} \Ee_{p,\lambda}
\left[g(u_1,...,u_k,\Zcal_b \cup \{u_1,\dots,u_k\},\Zcal_w)\right] 
\mu({\dd}u_1)\dots\mu({\dd}u_k).
\end{array} $$

\end{theorem}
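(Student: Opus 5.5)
The plan is to reduce the statement to the classical multivariate Mecke formula for a single Poisson process, using the independence of $\Zcal_b$ and $\Zcal_w$. By the marking and thinning theorems quoted above, $\Zcal_b$ and $\Zcal_w$ are independent homogeneous Poisson processes on $\Haa^d$ with intensity measures $p\lambda\,\mu$ and $(1-p)\lambda\,\mu$. Since $g\geq 0$ is jointly measurable, Fubini--Tonelli lets us condition on $\Zcal_w$. The left-hand side then equals $\Ee\big[\Phi(\Zcal_w)\big]$, where for a fixed locally finite configuration $\eta$ we set
$$ \Phi(\eta) := \Ee\Big[\sum_{\substack{z_1,\dots,z_k\in\Zcal_b\\ \text{distinct}}} g(z_1,\dots,z_k,\Zcal_b,\eta)\Big]. $$
It therefore suffices to prove, for every fixed $\eta$ and with $h(\cdot):=g(\cdot,\cdot,\eta)$, the one-process identity
$$ \Ee\Big[\sum_{\text{distinct } z_i\in\Zcal_b} h(z_1,\dots,z_k,\Zcal_b)\Big] = (p\lambda)^k\int\!\cdots\!\int \Ee\big[h(u_1,\dots,u_k,\Zcal_b\cup\{u_1,\dots,u_k\})\big]\,\mu({\dd}u_1)\cdots\mu({\dd}u_k). $$
Integrating this identity against the law of $\Zcal_w$ and applying Tonelli once more gives the stated formula. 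Measurability of $\eta\mapsto\Phi(\eta)$, and of the integrand on the right, follows from standard measurability of the maps $(u,\omega)\mapsto\omega\cup\{u\}$.

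For the one-process identity I would first treat $k=1$ and then induct on $k$. For $k=1$, a monotone class ($\pi$--$\lambda$) argument reduces matters to functions $h(u,\omega)=1_A(u)\,1\{\omega(B_1)=n_1,\dots,\omega(B_m)=n_m\}$, where $A$ has finite measure and $B_1,\dots,B_m$ are disjoint Borel sets of finite $\mu$-measure. We may refine so that $A$ is a union of cells of a partition $\{B_j\}$, together with a remaining region $B_0$ on which $h$ does not depend on $\omega$. Both sides are then explicit. On the left, the sum over points in $B_j$ contributes $n_j\,\Pee(\text{counts}=n)\cdot 1_{\{B_j\subseteq A\}}$. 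On the right, adding a point in $B_j$ turns the requirement $\omega(B_j)=n_j$ into $\omega(B_j)=n_j-1$, giving $p\lambda\mu(B_j)\,\Pee(\Po(p\lambda\mu(B_j))=n_j-1)\prod_{i\neq j}\Pee(\cdot)$. These agree by the identity $\nu\,\frac{\nu^{n-1}e^{-\nu}}{(n-1)!}=n\,\frac{\nu^ne^{-\nu}}{n!}$ together with independence over disjoint sets. For general $A$ of infinite measure one passes to the limit by monotone convergence.

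For the induction step, write the sum over distinct $k$-tuples as a sum over $z_1$ of the sum over distinct $(k-1)$-tuples in $\Zcal_b\setminus\{z_1\}$. Apply the $k=1$ case to the nonnegative function
$$ \tilde h(z_1,\omega):=\sum_{\substack{z_2,\dots,z_k\in\omega\setminus\{z_1\}\\ \text{distinct}}} h(z_1,\dots,z_k,\omega), $$
and then the induction hypothesis to the process $\Zcal_b$ with the deterministic point $u_1$ added. Points of $\Zcal_b$ coinciding with $u_1$ occur with probability zero, since $\mu$ is atom-free, so the inner sum over $\Zcal_b\cup\{u_1\}$ minus $u_1$ is just the sum over $\Zcal_b$.

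The main obstacle is not conceptual. It is the measure-theoretic bookkeeping: checking joint measurability so that Fubini applies, and running the monotone class argument cleanly, since the generating events must form a $\pi$-system and $\sigma$-finiteness must be used to handle $\mu(\Haa^d)=\infty$. In a paper of this type I would expect this to be dispatched by citing a standard reference (e.g.\ Theorem 4.4 of~\cite{LastPenrose2017}) for the one-process multivariate Mecke formula, together with the independence-and-conditioning step described above.
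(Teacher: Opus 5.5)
Your proposal is correct and matches what the paper does. The paper gives no proof of its own: it obtains the statement as a specialization of the general Mecke formula, citing \cite{SchneiderWeil}, Corollary 3.2.3, and, for the derivation of this black/white version, \cite{HansenMuller2024}, Corollary 5. Your reduction to the one-process multivariate formula, via the independence of $\Zcal_b,\Zcal_w$ and Tonelli, is exactly such a specialization. Your self-contained sketch of the one-process formula is sound and goes beyond what the paper spells out: the case $k=1$ by a $\pi$--$\lambda$ argument using $\nu\,\Pee(\Po(\nu)=n-1)=n\,\Pee(\Po(\nu)=n)$, then induction on $k$ using that $\mu$ is atom-free.
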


\noindent
As mentioned above, the version we state here is a specific case of a more general result.
The general version can for instance be found in~\cite{SchneiderWeil}, as Corollary 3.2.3. 
A derivation of the specific statement we give here can be found in~\cite{HansenMuller2024}, Corollary 5. 
(The result is phrased for dimension $d=2$ in~\cite{HansenMuller2024}, but the proof works in all dimensions.) 

The {\em Palm version} of the Poisson point process $\Zcal$ wrt.~a tuple of points $z_1,\dots, z_k$ is where 
we generate the process as usual, but then (deterministically) include the points $z_1,\dots,z_k$ afterwards.
That is, we consider the random point set $\Zcal \cup \{z_1,\dots,z_k\}$ instead of $\Zcal$.
In this paper we will only ever include black points in our Palm versions. We use the 
notations 
$\Pee_{p,\lambda}^{z_1,\dots,z_k}(.)$, respectively $\Ee_{p,\lambda}^{z_1,\dots,z_k}(.)$, to denote 
the probability measure, respectively expectation, with respect to the 
pair of point processes $(\Zcal_b \cup \{z_1,\dots,z_k\}, \Zcal_w)$.
(So we could have phrased the Mecke formula above using these notations.)

\section{Proofs.\label{sec:proofs}}

\subsection{Setup.\label{sec:setup}}

As mentioned earlier, the {\em Poisson-Delaunay graph} is the graph whose vertex set is 
$\Zcal$ and where $z,w \in \Zcal$ are connected by an edge if the Voronoi cells $C(z), C(w)$ intersect. 
We will mostly take the perspective of Poisson-Voronoi percolation as 
site percolation on the Poisson-Delaunay graph. This often results in more concise formulations, etc.

We remark that if $z,w \in \Zcal$ are distinct then any point $v \in C(z)\cap C(w)$ must satisfy 
$\dist(v,z)=\dist(v,w)$ and the open ball around $v$ of (hyperbolic) radius $\dist(v,w)=\dist(v,z)$ cannot
contain any point of $\Zcal$. This gives rise to the following alternative 
description of adjacency in the Delaunay graph : 

\begin{equation}\label{eq:DelEdge}\begin{array}{l}
\text{$z,w \in \Zcal$ are adjacent if and only if there is some open ball $B$ such that}\\
\text{$z, w\in \partial B$ and $B \cap \Zcal = \emptyset$.}
\end{array}
\end{equation}

\noindent
Note there is a priori no bound on the size of such a {\em Delaunay ball}.
When the positions of $z,w$ and $\Zcal$ are unfortunate, we can be in the situation 
where there is such a Delaunay ball, but the smallest Delaunay ball is huge compared to the distance $\dist(z,w)$. 
For this reason, we find it convenient to work with a more generous notion of adjacency, 
that does not require us to examine the Poisson point process outside a bounded region.
For $z, w \in \Haa^d$, we define their {\em Gabriel ball} $\BGab(z,w)$ as the open
ball with radius $\dist(z,w)/2$ and center the midpoint of the hyperbolic line segment 
between $z$ and $w$. Put differently, it is a ball $B$ with $z,w \in \partial B$ that has the smallest 
possible radius among all such balls.
The {\em Gabriel ball} takes its name from the {\em Gabriel graph}. This is the subgraph of the Delaunay graph 
where we only include those edges with $\Zcal \cap \BGab(z,w) = \emptyset$. 

For $z, w \in \Haa^d$, we define the event 

\begin{equation}\label{eq:Edef} E(z,w) := \left\{\begin{array}{l}\text{there exists an open
ball $B$ such that }\\\text{$z, w \in \partial B$ and $\BGab(z,w) \cap B \cap \Zcal_w = \emptyset$}\end{array}\right\}. 
\end{equation}

\noindent
Comparing to~\eqref{eq:DelEdge}, it is easily seen that if $z,w$ is an edge of the Poisson-Delaunay graph then 
$E(z,w)$ must hold.

\subsection{Upper bounding the probability of an edge.\label{sec:edgeprob}}

Here we establish that the probability of $E(z,w)$ decays doubly exponentially fast in 
$\dist(z,w)$:

\begin{lemma}\label{lem:edgeprob}
There exist $K=K(d), c=c(d)$ such that, 
for all $0<\lambda\leq 1$, all $0 < p \leq 1/2$ and all $z, w \in \Haa^d$:

$$ \Pee_{p,\lambda}( E(z,w) ) \leq 
K \cdot \exp\left[ -c \cdot \lambda \cdot e^{\left(\frac{d-1}{2}\right)\cdot\dist(z,w)}\right]. $$

\end{lemma}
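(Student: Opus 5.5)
We reduce $E(z,w)$ to the event that one of a bounded number $K(d)$ of fixed subsets of $\BGab(z,w)$, each of hyperbolic volume of order $e^{(d-1)\dist(z,w)/2}$, contains no white point. A union bound then finishes the proof.

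\emph{Normalisation.} By homogeneity and isometry invariance of $\Zcal_w$, we may move the picture to the Poincar\'e ball model $\Bee^d$. We place the midpoint of the geodesic between $z$ and $w$ at $\orig$, and the geodesic itself along the first coordinate axis, so that $z=a e_1$ and $w=-a e_1$ for some $0<a<1$. By~\eqref{eq:factball2}, the hyperbolic ball $\BGab(z,w)$, which is centred at $\orig$ with hyperbolic radius $r:=\dist(z,w)/2$, is the Euclidean ball $B_{\eR^d}(\orig,a)$. Any open hyperbolic ball $B$ with $z,w\in\partial B$ is also a Euclidean ball whose boundary passes through $\pm a e_1$. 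Its Euclidean centre is therefore of the form $tv$, with $v\perp e_1$ a unit vector and $t\geq 0$.

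\emph{Key geometric step.} We have $x\in B$ if and only if $\norm{x-tv}^2<t^2+a^2$, which is equivalent to $\norm{x}^2<a^2+2t\,x\cdot v$. Hence $B$ contains the half-ball $H_v:=\{x:\norm{x}<a,\ x\cdot v>0\}$ (for $t=0$, $B$ is all of $\BGab$). Now fix a finite set of unit vectors $v_1,\dots,v_K$ in $e_1^\perp$, forming a $\pi/4$-net in angle; $K$ depends only on $d$. For every admissible $v$ there is a $j$ with $\angle(v,v_j)<\pi/4$. By the triangle inequality for angles this gives $A_j:=\BGab(z,w)\cap\kone(v_j,\pi/4)\subseteq H_v\subseteq B\cap\BGab(z,w)$. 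Consequently
$$E(z,w)\subseteq\bigcup_{j=1}^K\{A_j\cap\Zcal_w=\emptyset\}.$$

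\emph{Volumes and conclusion.} The density in~\eqref{eq:Poincareballvolmeas} is radial, and $\BGab(z,w)$ is a Euclidean ball centred at $\orig$. So $\mu_{\Bee^d}(A_j)$ equals the fixed fraction $\Pee(U\in\kone(v_j,\pi/4))=:c_1(d)>0$ of $\mu(B(\orig,r))$, where $U$ is uniform on $S^{d-1}$. By~\eqref{eq:asympvolball}, for $r\geq r_0(d)$ this is at least $c_2 e^{(d-1)r}=c_2e^{(d-1)\dist(z,w)/2}$. Since $\Zcal_w$ is Poisson of intensity $\lambda(1-p)\geq\lambda/2$, each event $\{A_j\cap\Zcal_w=\emptyset\}$ has probability at most $\exp[-\tfrac{c_2}{2}\lambda e^{(d-1)\dist(z,w)/2}]$. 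The union bound gives the claim with $c=c_2/2$ whenever $\dist(z,w)\geq 2r_0$.

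For $\dist(z,w)<2r_0$ we use $\lambda\leq 1$: we enlarge $K$ to $\max(K,\exp[c\,e^{(d-1)r_0}])$, so that the right-hand side is at least $1$ and the bound holds trivially.

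The main obstacle is the geometric inclusion that every admissible $B$ contains a whole half of the Gabriel ball. Working in the ball model with the midpoint at the origin makes this a one-line Euclidean computation. The rest is the routine discretisation of directions and the volume estimate.
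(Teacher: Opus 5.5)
Your proof is correct, and its geometric core matches the paper's. In the Poincar\'e ball model with the Gabriel ball centred at $\orig$, every admissible ball $B$ contains the half of $\BGab(z,w)$ on the side of its Euclidean centre. This is exactly Lemma~\ref{lem:elemgeoEucl}, and your one-line computation $\norm{x}^2<a^2+2t\,x\cdot v$ is in effect its proof.

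You depart from the paper in bounding the probability that \emph{some} half of $\BGab(z,w)$ contains no white point. The paper conditions on the number $N$ of white points in $\BGab(z,w)$ and uses that, given $N=n$, they are i.i.d.\ with a rotationally invariant law. It then applies Wendel's formula (Lemma~\ref{lem:wendel}) to get $\Pee(\Bi(n-1,1/2)\leq d-1)$. It also needs two Chernoff bounds (Lemma~\ref{cher}), one for $N<\nu/2$ and one for the binomial tail.

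You instead replace the continuum of half-spaces by a finite $\pi/4$-net of directions in $e_1^\perp$. Every admissible half-ball then contains one of $K(d)$ \emph{fixed} cones $A_j$. Each $A_j$ carries a fixed fraction of $\mu(\BGab(z,w))$, because the density in~\eqref{eq:Poincareballvolmeas} is radial. After that you only need the Poisson void probability and a union bound.

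Your route is more elementary and self-contained, with no conditioning, no Wendel and no Chernoff. The price is only in the constants: $c$ picks up the factor $c_1(d)$ (the relative measure of a $\pi/4$-cap), and $K$ becomes the size of the net. Neither matters for how the lemma is used later, e.g.\ in Corollary~\ref{cor:edgeprob}. The paper treats the half-space event exactly rather than by discretisation, which is tidier conceptually but gives nothing stronger that is needed here.

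Your handling of $\dist(z,w)<2r_0$ via $\lambda\leq 1$, enlarging $K$ so the right-hand side is at least $1$, is fine. It does the same job as the paper's reduction to $s\geq s_0$.
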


\noindent
In the proof, we'll use the following elementary observation from Euclidean geometry.

\begin{lemma}\label{lem:elemgeoEucl}
If $a \in \eR^d\setminus \{\orig\}$ and an open ball $B$ are such that $a,-a \in \partial B$ then 
there is a linear halfspace $L$ such that

$$ B_{\eR^d}(\orig, \norm{a} ) \cap L \subseteq B_{\eR^d}(\orig, \norm{a} ) \cap B. $$

\end{lemma}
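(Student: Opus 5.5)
The plan is a direct computation with the center and radius of $B$: the two boundary conditions fix the radius of $B$ in terms of its center, and the halfspace in the direction of the center will work. Write $B = B_{\eR^d}(c,r)$ for some center $c\in\eR^d$ and radius $r>0$.

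First I will extract the two consequences of $a,-a\in\partial B$. These conditions say $\norm{a-c}^2 = r^2$ and $\norm{a+c}^2 = r^2$. Expanding both gives
$$\norm{a}^2 - 2a^tc + \norm{c}^2 = \norm{a}^2 + 2a^tc + \norm{c}^2 = r^2 .$$
Subtracting yields $a^tc = 0$, and then $r^2 = \norm{a}^2 + \norm{c}^2$.

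Next I will choose the halfspace. If $c = \orig$, then $B = B_{\eR^d}(\orig,\norm{a})$, and any linear halfspace $L$ trivially satisfies the inclusion. If $c\neq \orig$, I take
$$L := \{x \in \eR^d : c^t x > 0\} = \kone(c,\pi/2).$$

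Finally I will check the inclusion. Let $x \in B_{\eR^d}(\orig,\norm{a})\cap L$. Then
$$\norm{x-c}^2 = \norm{x}^2 - 2c^tx + \norm{c}^2 < \norm{a}^2 + \norm{c}^2 = r^2 ,$$
using $\norm{x}<\norm{a}$ and $c^tx>0$. Hence $x\in B$, which proves $B_{\eR^d}(\orig,\norm{a})\cap L \subseteq B_{\eR^d}(\orig,\norm{a})\cap B$.

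There is no real obstacle here. The only point needing care is the degenerate case $c=\orig$, where the halfspace $\{x : c^tx>0\}$ is not defined, and this is handled separately above.
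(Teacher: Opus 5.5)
Your proof is correct and is essentially the paper's argument: the same derivation of $c^ta=0$ and $r^2=\norm{c}^2+\norm{a}^2$, the same case split on $c=\orig$, and the same halfspace $L=\{x : c^tx>0\}$. The only difference is cosmetic. You expand $\norm{x-c}^2=\norm{x}^2-2c^tx+\norm{c}^2$ directly, whereas the paper decomposes $x=\lambda c+\mu v$ with $v\perp c$ to reach the same identity.
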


\noindent
For completeness we provide a proof, in Appendix~\ref{sec:elemgeoEucl}.

\begin{proofof}{Lemma~\ref{lem:edgeprob}}
Let $s_0$ be a large constant, to be specified more precisely as we proceed.
We first remark it suffices to prove the statement assuming 
$s:=\lambda \cdot e^{\left(\frac{d-1}{2}\right)\cdot\dist(z,w)} \geq s_0$.
(If there are $K,c$ that work whenever $s \geq s_0$ then the choice  
$K' := \max( K, \exp[ +c\cdot s_0 ]), c':=c$ works 
for all $z,w$.)
We find it convenient to work in the Poincar\'e ball model in the current proof.
Applying a suitable isometry if needed, we can assume without loss of generality 
that the center of $\BGab(z,w)$ coincides with the center $\orig$ of the Poincar\'e ball, and 
hence $z, w$ will be antipodes. Recalling that in the Poincar\'e ball model, 
hyperbolic balls are also Euclidean balls, the previous lemma gives that

$$ E(z,w) \subseteq \left\{\begin{array}{l}\text{there is a linear halfspace $L$ such}\\
\text{that $\Zcal_w \cap \BGab(z,w) \cap L = \emptyset$}\end{array}\right\} =: F(z,w). $$

\noindent 
We note that $N := \left|\Zcal_w \cap \BGab(z,w) \right| \isd \Po(\nu)$ follows a
Poisson distribution with mean 

$$ \nu := \lambda \cdot (1-p) \cdot \mu( \BGab(z,w) ) \geq 
(\lambda/2) \cdot \mu( \BGab(z,w) ) \geq c_1 \cdot s, $$

\noindent 
where $c_1 := \omega_{d-1} /(2^{d+1}(d-1))$. (We use $p\leq 1/2$ for the first inequality. For the 
second inequality we use~\eqref{eq:asympvolball} and that $s \geq s_0$ and we've chosen $s_0$
sufficiently large -- since $\lambda \leq 1$ we have that $\dist(z,w) \geq  (2/(d-1)) \cdot \ln s_0$ can be 
made arbitrarily large.)

Now note that 

\begin{equation}\label{eq:engel}
\Pee( E(z,w) ) \leq \Pee( N < \nu/2 ) + \sum_{n\geq \nu/2} \Pee( F(z,w) | N = n )\Pee( N=n ).
\end{equation}

By the Chernoff bound (Lemma~\ref{cher} above), we have 

\begin{equation}\label{eq:duivel} 
\Pee( N < \nu/2 ) \leq \exp[ - \nu \cdot H(1/2) ] \leq \exp[ - c_1\cdot H(1/2) \cdot s ], 
\end{equation}

\noindent
with $H(x) := x\ln x - x  + 1$ as in Lemma~\ref{cher}.

Conditional on $N=n$, the random set of points $\mathcal{Z}_w\cap \BGab(z,w)$ is 
distributed like an i.i.d.~sample of size $n$ from
an atom-free rotationally invariant distribution. (Namely, the volume measure of the Poincar\'e ball model,
restricted to $\BGab(z,w)$ and renormalized. See~\eqref{eq:Poincareballvolmeas}.)
This gives, for $n\ge \nu/2$:

$$ \begin{array}{rcl} \Pee(F(z,w)\mid N=n) 
& = & \Pee( \Bi(n-1,1/2) \leq d-1 ) \\
& \leq & \Pee( \Bi(n,1/2) \leq d ) \\
& \leq & \exp[ - (n/2) \cdot H(2d/n) ] \\
& \leq & \exp[ - (1/4) \cdot H(1/2) \cdot c_1 \cdot s ], \end{array} $$

\noindent
where the first line is Wendel's formula (stated as Lemma~\ref{lem:wendel}
above), the second line follows by obvious monotonicity (flipping an extra coin cannot increase the 
number of heads by more than one), the third line is the Chernoff bound, and in the last line we 
use that $s\geq s_0$ and $s_0$ is sufficiently large (note $H(x)$ is decreasing on $[0,1]$.)
It now also follows that

\begin{equation}\label{eq:escher} 
\sum_{n\geq \nu/2} \Pee( F(z,w) | N = n )\Pee( N=n ) \leq 
\exp[ - (1/4) \cdot H(1/2) \cdot c_1 \cdot s ] 
\end{equation}

\noindent 
Combining~\eqref{eq:engel},~\eqref{eq:duivel} and~\eqref{eq:escher} gives

$$ \Pee( E(z,w) ) \leq 2 \cdot \exp[ - (1/4) \cdot H(1/2) \cdot c_1 \cdot s ], $$

\noindent
a bound of the sought form.
\end{proofof}

We are actually going to be using the following conditional version of Lemma~\ref{lem:edgeprob}.

\begin{corollary}\label{cor:edgeprob}
There exist $K=K(d), c=c(d), \delta=\delta(d) > 0$ such that, 
for all $0<\lambda\leq 1$, all $0 < p \leq 1/2$, all $z, w \in \Haa^d$, all $n$ and all 
$z_1,\dots,z_n, w_1,\dots,w_n \in \Haa^d$ 
satisfying 

$$\mu\left( \BGab(z,w) \cap \left( \bigcup_{i=1}^n \BGab(z_i,w_i) \right) \right) < 
\delta \cdot \mu\left( \BGab(z,w) \right), $$ 

\noindent
we have 

$$ \Pee_{p,\lambda}( E(z,w) | E(z_1,w_1) \cap \dots \cap E(z_n,w_n) ) \leq 
K \cdot \exp\left[ -c \cdot \lambda \cdot e^{\left(\frac{d-1}{2}\right)\cdot\dist(z,w)}\right]. $$

\end{corollary}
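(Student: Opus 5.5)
The approach is to rerun the proof of Lemma~\ref{lem:edgeprob}, but only on the part of $\BGab(z,w)$ that avoids the other Gabriel balls.

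Write $G := \BGab(z,w)$ and $U := \bigcup_{i=1}^n \BGab(z_i,w_i)$. The conditioning event $E(z_1,w_1)\cap\dots\cap E(z_n,w_n)$ is measurable with respect to $\Zcal_w \cap U$, since each $E(z_i,w_i)$ only involves white points inside $\BGab(z_i,w_i)$. The set $G\setminus U$ is disjoint from $U$, so $\Zcal_w\cap(G\setminus U)$ is independent of the conditioning event.

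As in the proof of Lemma~\ref{lem:edgeprob}, we work in the Poincar\'e ball model with the centre of $G$ at $\orig$. Lemma~\ref{lem:elemgeoEucl} gives $E(z,w)\subseteq F(z,w)$, where $F(z,w)$ says that some linear halfspace $L$ has $\Zcal_w\cap G\cap L=\emptyset$. This in turn is contained in
$$F'(z,w):=\{\text{there is a linear halfspace $L$ with } \Zcal_w\cap (G\setminus U)\cap L=\emptyset\}.$$
The event $F'(z,w)$ is independent of the conditioning. Hence the conditional probability is at most $\Pee(F'(z,w))$, and it suffices to bound this unconditionally.

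To bound $\Pee(F'(z,w))$, note that the set $G\setminus U$ is no longer rotationally invariant, so Wendel's formula does not apply directly. I will handle this by a covering argument with cones. Fix a small angle $\alpha=\alpha(d)$ and a finite family of directions $v_1,\dots,v_M$, with $M=M(d)$, such that every linear halfspace contains some cone $\kone(v_j,\alpha)$. Then $F'(z,w)$ implies that for some $j$ the set $\Zcal_w\cap (G\setminus U)\cap \kone(v_j,\alpha)$ is empty.

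Since the normalized volume measure on $G$ is rotationally invariant and atom-free, with $\mu$ as in~\eqref{eq:Poincareballvolmeas}, each cone sector has measure $\mu(G\cap\kone(v_j,\alpha)) = q\,\mu(G)$ for a constant $q=q(d,\alpha)>0$; this is the reverse direction of Lemma~\ref{lem:kap}, and by symmetry $q$ does not depend on $j$. Now choose $\delta:=q/2$. Then
$$\mu\big((G\setminus U)\cap\kone(v_j,\alpha)\big)\geq (q-\delta)\,\mu(G) = (q/2)\,\mu(G).$$
Using $p\le 1/2$ and~\eqref{eq:asympvolball} as before, each such region has white-point count that is Poisson with mean at least $c_2 \lambda e^{\frac{d-1}{2}\dist(z,w)}$, for large distance; small distances are absorbed into $K$ exactly as before. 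A union bound over the $M$ directions then gives
$$\Pee(F'(z,w))\le M\exp[-c_2\lambda e^{\frac{d-1}{2}\dist(z,w)}],$$
which is of the required form with $K:=\max(M,\ldots)$.

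The main obstacle is justifying the finite cone family, i.e.\ that there is some $\alpha>0$ such that every linear halfspace $\{x: u^tx>0\}$ contains a cone $\kone(v_j,\alpha)$ from a fixed finite list. For this I take $\{v_j\}$ to be an $\alpha$-net of $S^{d-1}$. For any unit $u$ there is a $v_j$ with $\angle(u,v_j)<\alpha$, so $\kone(v_j,\alpha)\subseteq\kone(u,2\alpha)\subseteq\{x: u^tx>0\}$ as soon as $\alpha<\pi/4$. This also sidesteps Wendel's formula entirely. Positivity of $q$ is immediate, because the cone sector of the Euclidean ball $G$ centred at $\orig$ has positive hyperbolic measure proportional to $\mu(G)$.
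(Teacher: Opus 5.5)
Your argument is correct, and it takes a genuinely different route from the paper's.

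The paper does not revisit the geometry. It uses that $E(z,w)$ is decreasing in the white points and depends only on $\Zcal_w\cap\BGab(z,w)$. So, writing $R$ for the union of the other Gabriel balls and $B=\BGab(z,w)$, the worst possible conditioning is that $R\cap B$ contains no white points. This gives $\Pee(E\mid E_1\cap\dots\cap E_n)\le \Pee(E)/\Pee(\Zcal_w\cap R\cap B=\emptyset)$. The denominator costs at most a factor $\exp[\lambda\mu(R\cap B)]\le\exp[\lambda\delta\mu(B)]$, which is absorbed into the exponent of Lemma~\ref{lem:edgeprob} (via \eqref{eq:volballUB}) by taking $\delta$ small compared with that lemma's constant $c$.

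You instead discard the overlap region altogether and pass to an event $F'$ that is independent of the conditioning. Since $\BGab(z,w)\setminus U$ is no longer rotationally invariant, you replace Wendel's formula and the Chernoff bound by a finite net of cones plus a union bound. Your $\delta$ is then tied to the normalized cap measure $q$ rather than to the constant of Lemma~\ref{lem:edgeprob}.

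The paper's route is shorter given Lemma~\ref{lem:edgeprob}, and it is generic: it works for any decreasing event localized in $B$ that has an unconditional bound of the form $e^{-c\lambda\mu(B)}$. Yours is more elementary and self-contained. Taking $U=\emptyset$, it reproves Lemma~\ref{lem:edgeprob} itself without Wendel or Chernoff. The price is that it reuses the specific halfspace structure coming from Lemma~\ref{lem:elemgeoEucl}, so it does not transfer to other events.
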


\begin{proof}
For convenience, we write $r := \dist(z,w), B = \BGab(z,w), B_i = \BGab(z_i,w_i)$ and $E = E(z,w), E_i = E(z_i,w_i)$.
We consider a two-stage procedure where first we reveal the status of the 
white Poisson process inside $R := B_1 \cup \dots \cup B_n$ to 
decide whether or not $E_1\cap\dots\cap E_n$ holds and then 
we reveal the status of the white Poisson process on the remainder of $\Haa^d$ to check if $E$ holds as well.
This thought experiment leads to:

$$ \begin{array}{rcl} 
\Pee( E | E_1 \cap \dots \cap E_n ) 
& \leq & \Pee( E | \Zcal_w \cap R = \emptyset ) \\
& = & \Pee( E | \Zcal_w \cap R \cap B = \emptyset ) \\
& \leq & \Pee(E) / \Pee( \Zcal_w \cap R \cap B = \emptyset ). \end{array} $$

\noindent
(The first inequality follows by obvious monotonicity, the second step holds because $E$ depends only 
on what happens inside $B$, and the last inequality follows from the definition of conditional probability.)
Lemma~\ref{lem:edgeprob} now gives

$$ \Pee( E | E_1 \cap \dots \cap E_n ) \leq 
K\cdot\exp\left[ -c\cdot\lambda \cdot e^{\left(\frac{d-1}{2}\right)\cdot r} \right]
\cdot \exp\left[ + \lambda \cdot \mu(R\cap B)\right], $$

\noindent 
with $K,c$ as provided by that lemma. 
Assuming $\mu(R\cap B) \leq \delta\cdot\mu(B)$, the upper bound~\eqref{eq:volballUB} gives

$$ \mu(R\cap B) \leq \delta \cdot \frac{\omega_{d-1}}{2^{d-1}(d-1)} \cdot e^{\left(\frac{d-1}{2}\right)\cdot r}. $$

\noindent
So 

$$ \Pee( E | E_1 \cap \dots \cap E_n ) \leq  
K \cdot \exp\left[ - c' \cdot \lambda \cdot e^{\left(\frac{d-1}{2}\right)\cdot r} \right], $$

\noindent 
with $c' := c - \delta \cdot \frac{\omega_{d-1}}{2^{d-1}(d-1)}$ -- which is $>0$ provided $\delta$ is chosen 
sufficiently small. 
\end{proof}

\subsection{Overlapping balls in hyperbolic $d$-space.\label{sec:overlap}}

The following geometric fact will be used in conjunction with Corollary~\ref{cor:edgeprob}, when we 
need to control the overlap between Gabriel balls of edges.
The statement may appear counterintuitive to readers more familiar with Euclidean geometry than hyperbolic 
geometry, but the proof is not overly involved.

\begin{lemma}\label{lem:elemgeo} 
There exist $K = K(d), c=c(d) > 0$ such that for all 
$r > 0$ and all $x,y \in \Haa^d$:

$$ \mu\left( B(x,r) \cap B(y,r) \right) \leq K \cdot e^{- c \cdot \dist(x,y)} \cdot 
\mu\left( B(x,r) \right). $$

\end{lemma}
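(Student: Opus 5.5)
Set $t := \dist(x,y)$. If $t \leq 2r$ fails, the intersection is empty and there is nothing to prove. If $t$ is bounded by a constant, the bound is trivial: take $K \geq e^{c t_0}$. So we may assume $t_0 \leq t < 2r$, with $t_0$ a large constant. The plan is to show that every point of the lens $B(x,r)\cap B(y,r)$ lies within hyperbolic distance $r - t/2 + C$ of the midpoint $m$ of the geodesic segment $[x,y]$, for some constant $C=C(d)$. Given this, \eqref{eq:volballrminuss} together with homogeneity (isometries preserve $\mu$) yields
$$ \mu\left(B(x,r)\cap B(y,r)\right) \leq \mu\left(B(m, r-t/2+C)\right) \leq e^{-(d-1)(t/2-C)}\mu\left(B(x,r)\right), $$
provided $t/2 - C \geq 0$; for smaller $t$ the trivial bound covers the case. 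This gives the lemma with $c = (d-1)/2$ and $K = e^{(d-1)C}$.

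The geometric claim would follow from the hyperbolic cosine rule \eqref{eq:hypercosrule}. Fix $u$ in the lens and let $\gamma$ be the angle at $m$ between the geodesic $[m,u]$ and $[m,x]$; then the angle towards $y$ is $\pi-\gamma$. Write $a := \dist(m,u)$ and $h := t/2$. Applying the cosine rule to the triangles $m,x,u$ and $m,y,u$ gives
$$ \cosh r > \cosh a \cosh h \mp \cos\gamma \sinh a \sinh h. $$
One of the two signs makes the correction term nonnegative (take whichever of $x,y$ makes $\cos$ of the relevant angle $\leq 0$). That triangle yields $\cosh r > \cosh a \cosh h$. Hence $e^{r} \geq \tfrac12 e^{a} e^{h}$, i.e. $a \leq r - h + \ln 2$, so $C = \ln 2$ works. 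The degenerate cases $u=m$ or $u$ on the geodesic line through $x,y$ are immediate, by the same inequality with $\gamma\in\{0,\pi\}$.

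The only delicate point is this step of choosing the favourable triangle, which reduces to $\cos\gamma$ and $\cos(\pi-\gamma)$ having opposite signs. Everything else is bookkeeping of constants, including checking that \eqref{eq:volballrminuss} is applied with radius $r - t/2 + C$ possibly exceeding $r$ when $t$ is small. That case is absorbed by the trivial bound $\mu(B(x,r)\cap B(y,r))\leq\mu(B(x,r))$ and by enlarging $K$.
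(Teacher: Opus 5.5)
Your argument is correct, and it takes a different and more direct route than the paper.

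\textbf{How the two proofs differ.} The paper works from the centre $x$. It first reduces to $s_0\le \dist(x,y)\le r/2$, which costs a factor $4$ in the exponent. It then splits the lens into $B(x,r/2)\cup B(y,r/2)$ and two shells $A_1,A_2$. The cosine rule at $x$ shows that $A_1$ lies in a Euclidean cone at $x=\orig$ of opening $O(e^{-\dist(x,y)/2})$ in the Poincar\'e ball model. The volume fraction of that cone is then bounded using rotational invariance and Lemma~\ref{lem:kap}.

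You instead apply the cosine rule at the midpoint $m$. There the two triangles combine into the hyperbolic median identity
\[
\cosh\dist(u,x)+\cosh\dist(u,y)=2\cosh\dist(u,m)\cosh(t/2),
\]
so the whole lens sits inside the concentric ball $B(m,r-t/2+O(1))$. After that, \eqref{eq:volballrminuss} and homogeneity finish the proof in one line.

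\textbf{What your route gains.} It avoids the ball model, the cone lemma and all case reductions except the trivial one for small $t$. It also yields $c=(d-1)/2$, whereas the paper's argument as written only gives $(d-1)/8$. Your exponent is optimal. The lens contains $B(m,r-t/2)$, and letting $r\to\infty$ with $t$ fixed shows that no larger $c$ can work.

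\textbf{A small point on the constant.} The crude bounds $\cosh r\le e^r$ and $\cosh a\cosh h\ge \tfrac14 e^{a+h}$ only give $a<r-h+\ln 4$, not $\ln 2$. Your sharper constant is true, but it needs one more line. For example, use $\cosh a\cosh h\ge\tfrac12\bigl(\cosh(a+h)+1\bigr)\ge\cosh(a+h-\ln 2)$ when $a+h\ge\ln 2$. In any case, any constant $C$ suffices for the lemma.
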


\begin{proof}
For notational convenience, we write $s := \dist(x,y)$.
At the expense of worsening (increasing) $K$, we can and do assume that $s \geq s_0$ 
where $s_0$ is a large constant to be determined more precisely as we proceed with the proof.
(If we have a bound of the required form $K \cdot e^{-c\cdot s}$ that works for $s\geq s_0$, then 
replacing $K$ by $K' := \max( K, e^{c\cdot s_0} )$ gives a bound that works for all $s \geq 0$.)

Also note that if $s > 2r$ then $\mu\left( B(x,r) \cap B(y,r) \right) = 0$ and the bound is trivial.
So we can assume $s \leq 2r$. What is more, at the expense of worsening (decreasing) the constant $c$, we can assume that 
$s \leq r/2$.
(In a bit more detail: $\mu\left( B(x,r) \cap B(y,r) \right)$ is non-increasing in $s = \dist(x,y)$. 
So if we have a bound of the form $K \cdot e^{-c s}$ that works for $s \leq r/2$, then 
the bound $K \cdot e^{-(c/4) s}$ works for all $s \leq 2r$. Hence for all $s$.)

Writing %
$$ \begin{array}{l} 
A_1 := \{ z : r \geq \dist(x,z) \geq \dist(y,z) \geq r/2 \}, \\
A_2 := \{ z : r \geq \dist(y,z) \geq \dist(x,z) \geq r/2 \}, 
\end{array} $$

\noindent
we have 

$$ B(x,r) \cap B(y,r) \subseteq B(x,r/2) \cup B(y,r/2) \cup
A_1 \cup A_2. $$

\noindent
The bound~\eqref{eq:volballrminuss} gives

\begin{equation}\label{eq:aap1} 
\mu(B(x,r/2)) = \mu(B(y,r/2))
\leq e^{-((d-1)/2)\cdot r} \cdot \mu(B(x,r)).
\end{equation}

Now let $z \in A_1$ be arbitrary and consider the hyperbolic triangle 
with corners $x,y,z$. 
We let $\alpha := \angle zxy$ be the angle at $x$ and write 
$t := \dist(x,z), u := \dist(z,y)$. 
The hyperbolic cosine rule gives

$$ \begin{array}{rcl} \cosh(u) & = & \cosh(t)\cosh(s) - \cos\alpha\cdot \sinh(t)\sinh(s) \\
    & = & (1-\cos\alpha) \cosh(t)\cosh(s) + \cos\alpha\cdot\left(\cosh(t)\cosh(s) - \sinh(t)\sinh(s)\right) \\
    & = & (1-\cos\alpha) \cosh(t)\cosh(s) + \cos \alpha \cdot \cosh(t-s). 
   \end{array}
$$

\noindent
We have $t \geq u \geq r/2 \geq s$ (as $z \in A_1$), implying

$$ 2 \cosh(t) \geq \cosh(u) - \cos \alpha \cdot \cosh(t-s)
= (1-\cos\alpha) \cosh(t)\cosh(s). $$

\noindent
Dividing both sides by $\cosh(t)\cosh(s)$ gives

$$ 1 - \cos\alpha \leq \frac{2}{\cosh(s)} \leq 4 e^{-s}. $$

\noindent
Recalling our assumption $s\geq s_0$ and having chosen $s_0$ sufficiently large, we must have 
$4 e^{-s} < 1 - 1/\sqrt{2}$. So $0 \leq \alpha < \pi/4$. As $1-\cos\alpha \geq \alpha^2/3$ for $0 \leq \alpha \leq \pi/4$, it follows that

\begin{equation}\label{eq:alphaeend} 
\alpha \leq 3^{3/2}\cdot e^{-s/2}. 
\end{equation}

\noindent
Let us consider the Poincar\'e ball representation of 
$\Haa^d$. Applying a suitable isometry if needed, we can assume without loss of generality that 
$x = \orig$ is the origin. We note that in this situation, we have $z \in B(\orig,r) \cap \kone( y, 3^{3/2} e^{-s/2} )$ 
for all $z \in A_1$, with $\kone(.,.)$ the standard (Euclidean) cone defined above. (See Figure~\ref{fig:helpful}.)

\begin{figure}[!h]
 \begin{center}
  \includegraphics[width=.5\textwidth]{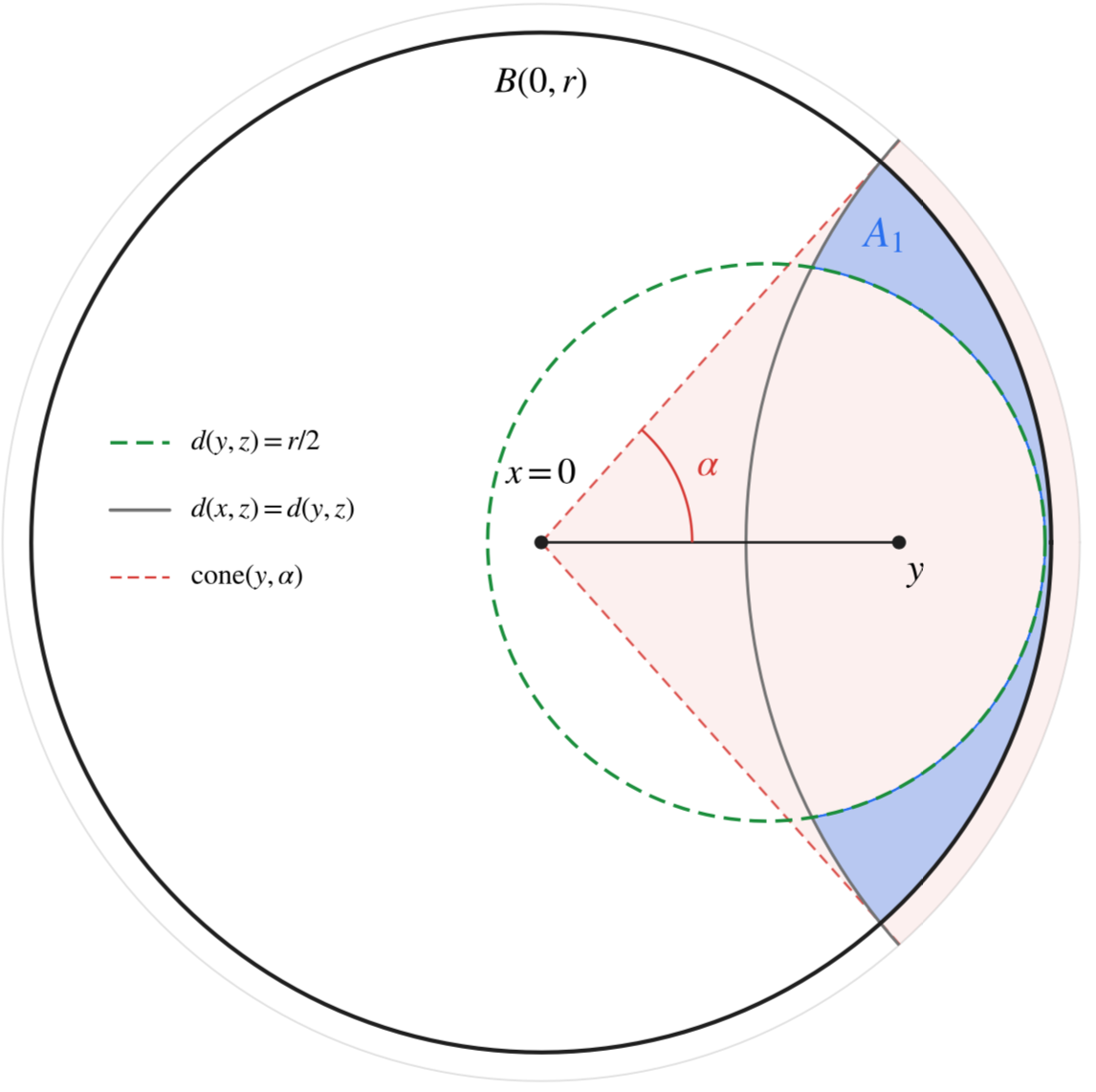}
 \end{center}

 \caption{Illustration of the sets $A_1, \kone(y,\alpha)$.\label{fig:helpful}}
\end{figure}

\noindent
This gives 

$$ \mu(A_1) \leq \mu\left( B(\orig,r) \cap \kone( y, 3^{3/2} e^{-s/2} ) \right) 
= \mu( B(\orig, r ) ) \cdot \Pee( U \in \kone(y, 3^{3/2} e^{-s/2}) ), $$

\noindent 
where $U$ is uniform on $B(\orig,r) $, i.e.~chosen according to the renormalized 
hyperbolic volume measure on $B(\orig,r) $. 
By the (rotational) symmetries of the Poincar\'e ball model, we can invoke Lemma~\ref{lem:kap}, to obtain:

\begin{equation}\label{eq:aap2} 
\mu(A_1) = \mu(A_2) \leq \mu( B(\orig, r ) ) \cdot c \cdot 3^{3(d-1)/2} \cdot e^{-((d-1)/2)\cdot s},  
\end{equation}

\noindent
where $c$ is as provided by Lemma~\ref{lem:kap}.
Combining~\eqref{eq:aap1} and~\eqref{eq:aap2} concludes the proof.
\end{proof}

%
%
%
%
%
%

\subsection{A model with independent edges.\label{sec:gracarmodel}}

In this section, we consider a percolation model whose vertices are the black Poisson points $\Zcal_b$ 
and edges are included independently, with the probability of an edge between $z, w$ 
prescribed by the function $P(z,w)$ which we will now proceed to define.
As mentioned earlier, the model we consider here is {\em almost} a special case of the weight-dependent random connection 
model of Gracar et al.~\cite{GracarEtal}.
Since we need to derive a new result and then transfer it to the 
Poisson-Delaunay graph on $\Haa^d$, we find it convenient to define the model from scratch in a setting 
that stays closer to that of the Poisson-Delaunay graph on $\Haa^d$, as opposed to working with 
the setup of~\cite{GracarEtal} and then having to translate everything back later.

In this subsection, we shall be working exclusively in the Poincar\'e halfspace model of $\Haa^d$.
For $z, w \in \Haa^d$, we set:

$$ \tdist(z,w) := 2 \ln\left(\frac{\norm{(z_1,\dots,z_{d-1})-(w_1,\dots,w_{d-1})}}{2\sqrt{z_dw_d}}\right). $$

\noindent
(With the convention that $\ln(0) = -\infty$.)
We remark that

\begin{equation}\label{eq:tdistdist} 
\dist(z,w) \geq \tdist(z,w),  
\end{equation}

\noindent
for all $z,w$. (This can be seen from~\eqref{eq:distdef} by noting that $\norm{z-w}
\geq \norm{(z_1,\dots,z_{d-1})-(w_1,\dots,w_{d-1})}$ and that $\arcsinh(r) > \ln(r)$ for all $r\geq 0$ -- 
which is perhaps easiest seen by noting that $\sinh(r) = \frac12 (e^r-e^{-r}) < e^r$.)

We fix a constant $s_0>1$ and set, for $s\in [-\infty,\infty)$:

$$ \varphi(s) := 
\begin{cases} 
  1 & \text{ if $s \leq s_0$, and; } \\
  s^{-2} & \text{ otherwise.}
\end{cases}
$$

\noindent
We are now finally ready to define the edge probability $P(z,w)$ by:

\begin{equation}\label{eq:Pdef} P(z,w) := 
\varphi\left( \lambda \cdot e^{\left(\frac{d-1}{2}\right) \cdot \widetilde{\dist}(z,w)} \right). 
\end{equation}

\noindent
As the reader may have noticed $P(z,w)$ is (much) larger than the upper bound on the edge probability in the 
Poisson-Voronoi model we've given in Lemma~\ref{lem:edgeprob} above. 
This will give us some extra wiggle room later on, when we transfer the results from 
this section to the true Poisson-Voronoi model. 

We will call the continuum percolation model we have just described, with vertex set $\Zcal_b$ and 
edges included independently with probabilities according to $P(.,.)$, the {\em independent edge model}.

For convenient future reference, we note that if we write $z = (x,y), w = (u,v) \in \eR^{d-1}\times(0,\infty)$ then 

\begin{equation}\label{eq:tdistnorm}
\exp\left[ \left(\frac{d-1}{2}\right) \cdot \widetilde{\dist}(z,w)\right] = 
\left(\frac{\norm{x-u}}{2\sqrt{y\cdot v}}\right)^{d-1}.
\end{equation}

\noindent
Also for convenient future reference, we point out that 

\begin{equation}\label{eq:cphidef} 
\begin{array}{rcl} 
\int_{\eR^{d-1}} \varphi(\norm{v}^{d-1}) {\dd}v 
& = &  
\omega_{d-2} \cdot \int_0^\infty \varphi( r^{d-1} ) r^{d-2} {\dd}r \\[1.5ex]
& = & 
\omega_{d-2} \cdot \left( \int_0^{s_0^{1/(d-1)}}r^{d-2} {\dd}r 
+ \int_{s_0^{1/(d-1)}}^\infty r^{-d} {\dd r} \right) \\
& = & 
\kappa_{d-1} \cdot \left( s_0 + s_0^{-1} \right) \\
& =: &  
c_\varphi,
\end{array} \end{equation}

\noindent 
is a constant that depends only on the dimension $d$ and the choice of the parameter $s_0$.
(In the first line we've used a switch to multidimensional polar coordinates, see for example~Theorem 15.13 in~\cite{Schilling}, and 
in the penultimate line we use that $\omega_{d-2} = (d-1)\cdot\kappa_{d-1}$.)

A path $z_0, z_1, \dots, z_n$ in the independent edge model is called a 
{\em shortcut-free path} if $P(z_i, z_j ) < 1$ whenever $|i-j|>1$. 
In other words, in a shortcut-free path it is not possible to jump ahead in the path 
using edges that are included in the model automatically because $\tdist$ is too small. Here
``too small'' of course means at most: 

\begin{equation}\label{eq:rho0def} 
\rho_0 := 
(2/(d-1))\cdot(\ln(1/\lambda)+ \ln s_0).
\end{equation}

We now define

$$ 
S_n(h) := 
\Ee_{p,\lambda}^o \left|\left\{\begin{array}{l}
                  \text{shortcut-free paths of length $n$ between $o$ and} \\
                  \text{some $z \in \Zcal_b \cap \left(\eR^{d-1} \times [h,\infty)\right)$ with all internal} \\
                  \text{nodes in $\eR^{d-1} \times (0,h)$}
\end{array} \right\}\right|. $$

The goal of this section is to establish the following result.

\begin{proposition}\label{prop:Snh}
For every $s_0>1$ there exist $c_1 = c_1(s_0), c_2 = c_2(s_0)$ such that

$$ S_n(h) \leq \left(c_1 \cdot p\right)^n \cdot \Pee( \Po( c_2 \cdot \ln h ) < n ), $$

\noindent
for all $0 < p < 1$, all $0<\lambda\leq 1$, all $n \in \eN$ and all $h>1$.
\end{proposition}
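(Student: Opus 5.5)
We expand $S_n(h)$ with the Mecke formula (Theorem~\ref{thm:Mecke}) applied to $z_1,\dots,z_n \in \Zcal_b$ under the Palm measure at $o=z_0$. Because the edges of the independent edge model are included independently, this gives
$$ S_n(h) \le (p\lambda)^n \int\cdots\int \prod_{i=1}^n P(z_{i-1},z_i)\, 1\{\text{shortcut-free}\}\,1\{z_{n,d}\ge h,\ z_{i,d}<h \text{ for } i<n\}\,\mu(\dd z_1)\cdots\mu(\dd z_n). $$
We write $z_i=(x_i,y_i)$ with $x_i \in \eR^{d-1}$ and $y_i>0$, so that $\mu(\dd z_i)=y_i^{-d}\dd x_i\,\dd y_i$. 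By~\eqref{eq:tdistnorm}, $P(z_{i-1},z_i)=\varphi\bigl(\lambda(\norm{x_{i-1}-x_i}/2\sqrt{y_{i-1}y_i})^{d-1}\bigr)$. Substituting $v=\lambda^{1/(d-1)}(x_{i-1}-x_i)/(2\sqrt{y_{i-1}y_i})$ and using~\eqref{eq:cphidef}, the $x_i$-integral of a single factor equals $c_\varphi\,2^{d-1}\lambda^{-1}(y_{i-1}y_i)^{(d-1)/2}$. This exactly cancels the $\lambda$ in $(p\lambda)^n$. After integrating out $x_n,x_{n-1},\dots,x_1$ in turn, we are left with a one-dimensional chain in the heights:
$$ (c\,p)^n\int \prod_{i=1}^n (y_{i-1}y_i)^{(d-1)/2}\prod_i y_i^{-d}\,\dd y_i, $$
which is, with $t_i=\ln y_i$, the kernel $\prod_i e^{-(t_i - t_{i-1})/2}\cdots$. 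This is the analogue of Lemmas 2.2 and 2.3 of~\cite{GracarEtal}, with $\gamma=1/2$.

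This crude chain diverges, because consecutive heights can be arbitrarily close; this is where shortcut-freeness must enter. Following Gracar et al., I would use the shortcut-free condition for the pair $z_{i-1},z_{i+1}$ (and the analogous condition on longer jumps) to impose an ordering constraint. A shortcut-free path cannot have two consecutive steps both going ``down then up'' to nearby heights without the endpoints being automatically adjacent. Concretely, I would classify each vertex of the path as a local minimum or local maximum in height. For a local-minimum vertex $z_i$, one can avoid integrating the full factor $P(z_{i-1},z_i)P(z_i,z_{i+1})$: one bounds it by $\min$ of the two and integrates using the third constraint $P(z_{i-1},z_{i+1})<1$. This yields a kernel in log-heights $t$ of the form $e^{-|t_i-t_{i-1}|/2}$ times bounded factors. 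The effect is that each step contributes a factor $c\,p$ times a density on increments in $t=\ln y$ that behaves like an exponential kernel with mean increment bounded by a constant.

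The Poisson term would then come from the constraint that all internal heights are in $(0,h)$ while the final height is at least $h$, that is, $t_0=0$, $t_i<\ln h$ and $t_n\ge \ln h$. Bounding the height chain by a renewal-type process, I would integrate out the positions of the ``record'' heights. The sequence of upward increments behaves like the arrivals of a Poisson process of rate $c_2$ on $[0,\ln h]$. Requiring the walk to reach $\ln h$ within $n$ steps gives, after an ordered-integration (simplex volume) computation, a sum of terms $(c_2\ln h)^k/k!$ over $k<n$ multiplied by $e^{-c_2\ln h}$. This is precisely $\Pee(\Po(c_2\ln h)<n)$, with the leftover constants absorbed into $c_1$.

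I expect the main obstacle to be the second step: making the shortcut-free constraint produce summable, exponentially decaying kernels in the log-heights in this borderline $\gamma=1/2$ case, uniformly in $\lambda\le 1$. It is especially delicate for very low vertices ($y\to 0$), which are not present in the Gracar et al.\ setting. To handle them, I would first try to reproduce their Lemma 2.3 argument: one inducts over the path and carries a weight function $y^{(d-1)/2}$ (a half-power of the height) as the inductive hypothesis, checking that a single integration step reproduces it with a constant factor.
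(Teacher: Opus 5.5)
There is a genuine gap. Your outline gets both ends right: the Mecke formula with the horizontal integrations, and the exact exponential/Poisson computation for a chain of increasing heights, which is the paper's Lemma~\ref{lem:Ikh}. The middle step carries all the difficulty, and it is not supplied; the concrete mechanism you propose for it fails.

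First, a small correction: the crude chain diverges because internal heights may tend to $0$, where the height measure is $y^{-1}\,\dd y$. It is not because consecutive heights can be close. Second, and more seriously, replacing $P(z_{i-1},z_i)P(z_i,z_{i+1})$ by the smaller factor discards exactly what is needed. Suppose $z_i=(x,y)$ lies deep below $z_{i\pm1}=(x_{i\pm1},y_{i\pm1})$, with $y_{i-1}\asymp y_{i+1}$, and set $D:=\norm{x_{i-1}-x_{i+1}}$. On a horizontal ball of radius $\asymp D$ around $x_{i-1}$, the minimum is the essentially constant factor $P(z_i,z_{i+1})\asymp\lambda^{-2}(y_{i+1}y)^{d-1}D^{-2(d-1)}$. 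Hence $\lambda y^{-d}\int\min\,\dd x\gtrsim\lambda^{-1}y_{i+1}^{d-1}D^{-(d-1)}y^{-1}$, which again is not integrable at $y=0$.

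What works, as in the paper's proof of Lemma~\ref{lem:U}, is to keep the product. By the triangle inequality, one of $\norm{x-x_{i-1}},\norm{x-x_{i+1}}$ is at least $D/2$. Say it is $\norm{x-x_{i+1}}\geq D/2$. Using $\varphi(s)\le s_0^2s^{-2}$ and $\tdist(z_{i-1},z_{i+1})>\rho_0$, bound $P(z_i,z_{i+1})\le s_0^2 4^{d-1}(y/y_{i-1})^{d-1}P(z_{i-1},z_{i+1})$. Then integrate $P(z_{i-1},z_i)$ exactly in $x$, which gives $2^{d-1}c_\varphi\lambda^{-1}(y_{i-1}y)^{(d-1)/2}$. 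The $y$-integrand becomes $\propto y_{i-1}^{-(d-1)/2}y^{(d-3)/2}$, which is integrable, so $\lambda p\int(\cdots)\,\mu(\dd z_i)\le Cp\,P(z_{i-1},z_{i+1})$.

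The output of this estimate is not a one-dimensional kernel in log-heights, as your sketch suggests. It is a new edge factor $P(z_{i-1},z_{i+1})$ that still depends on horizontal positions, and that is what allows the estimate to be iterated. It also explains why a forward induction carrying a weight of the current endpoint alone cannot close. A valley vertex is tamed only through the constraint linking its two neighbours, and that information is not retained by such a weight. Already the one-step endpoint density $\lambda p\,P(o,\cdot)$ has infinite total mass near $\partial\Haa^d$.

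The missing organisation is the paper's:
\begin{itemize}
\item Split the path at its left-to-right maxima $o=z_{j_0},z_{j_1},\dots,z_{j_k}=z_n$, which gives at most $2^n$ choices.
\item Between consecutive records, every internal vertex lies below both records. The expected number $U_m(z,w)$ of such shortcut-free excursions with $m$ internal vertices satisfies $U_m(z,w)\le(c_5p)^mP(z,w)$ (Lemma~\ref{lem:U}).
\item This is proved by induction on $m$, splitting at the \emph{highest} internal vertex $z_\ell$. Both halves are again excursions below their endpoints, so the induction hypothesis turns them into $P(z,z_\ell)P(z_\ell,w)$. The one-valley estimate above then closes the step, with shortcut-freeness giving $\tdist(z,w)>\rho_0$. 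The Catalan recursion absorbs the combinatorics into $4^m$.
\item What remains is exactly $I_k(h)$ over the increasing records.
\end{itemize}
If you instead peel valleys one at a time, you must fix an order of removal. Naively summing over height orderings then costs up to $(n-1)!$, which destroys the $(c_1p)^n$ form; the highest-vertex recursion is what reduces this bookkeeping to Catalan numbers.
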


We say $z \in \Haa^d$ is {\em underneath} $w \in \Haa^d$ if $z_d < w_d$ and 
$z$ is {\em above} $w$ otherwise. 
For $k \in \eN$ and $h > 1$ we denote by $I_k(h)$ the expected number of (not necessarily shortcut-free) paths of length $k$
between $o$ and some vertex in $\Zcal_b \cap \left(\eR^{d-1} \times [h,\infty)\right)$
such that each vertex is above the previous one and only the last one has $d$-th coordinate $\geq h$, 
in the Palm version of the model wrt.~$o$.

In order to reduce the notational burden in the upcoming arguments, we introduce the notations:

\begin{equation}\label{eq:xigdef}
 \begin{array}{l}
  \xi_n(w_1,\dots,w_n) := 1_{\left\{1 < (w_1)_d < \dots < (w_{n-1})_d < h \leq (w_n)_d\right\}}, \\[2ex]
  f_n(w_1,\dots,w_n) := \xi_n(w_1,\dots,w_n) \cdot P(o,w_1) \cdot P(w_1,w_2) \cdots P(w_{n-1},w_n).
 \end{array}
\end{equation}

\noindent 
We remark that the Mecke formula gives

\begin{equation}\label{eq:IkhMecke}
\begin{array}{rcl} 
I_k(h) 
& := & \displaystyle 
      \Ee_{p,\lambda}^o \left|\left\{\begin{array}{l}
                  \text{paths $o, z_1,\dots,z_k$ with} \\
                  \text{$1 < (z_1)_d < \dots < (z_{k-1})_d < h \leq (z_k)_d$} 
      \end{array} \right\}\right| \\[2ex]
& = &  \displaystyle 
(\lambda p)^k \int_{\Haa^d}\dots\int_{\Haa^d} f_k(z_1,\dots,z_k) \mu({\dd}z_k)\dots\mu({\dd}z_1).
\end{array}
\end{equation}

\noindent
We are now ready for the computations giving the following lemma.

\begin{lemma}\label{lem:Ikh}
For every $s_0>1$ there exist $c_3 = c_3(s_0), c_4 = c_4(s_0)$ such that, 
for all $0 < p < 1$, all $0 < \lambda \leq 1$, all $k \in \eN$ and all $h > 1$ we have 

$$ I_k(h) = \left(c_3 \cdot p\right)^k \cdot 
\Pee\left( \Po\left( c_4 \cdot \ln h \right) = k-1 \right). $$

\end{lemma}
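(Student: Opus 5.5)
Starting from the Mecke representation \eqref{eq:IkhMecke}, I will integrate out the horizontal coordinates one vertex at a time, beginning with the last. Write $z_i=(x_i,y_i)\in\eR^{d-1}\times(0,\infty)$ and $y_0:=1$, $x_0:=\orig$. The key observation is that $P(z_{i-1},z_i)$ depends on the horizontal coordinates only through $\norm{x_i-x_{i-1}}$. By \eqref{eq:tdistnorm},
$$P(z_{i-1},z_i)=\varphi\Bigl(\lambda\bigl(\norm{x_i-x_{i-1}}/(2\sqrt{y_{i-1}y_i})\bigr)^{d-1}\Bigr).$$
Substituting $v=\lambda^{1/(d-1)}(x_i-x_{i-1})/(2\sqrt{y_{i-1}y_i})$ and using \eqref{eq:cphidef}, we get
$$\int_{\eR^{d-1}}P(z_{i-1},z_i)\,{\dd}x_i=\lambda^{-1}\,2^{d-1}(y_{i-1}y_i)^{(d-1)/2}\,c_\varphi,$$
independently of $x_{i-1}$. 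Integrating successively over $x_k,x_{k-1},\dots,x_1$ therefore factorizes completely, and $\xi_k$ depends only on the heights. The factors $\lambda^{-k}$ cancel $\lambda^k$ from $(\lambda p)^k$, which is exactly why the bound is uniform in $\lambda$.

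What remains is the height integral, with measure $\mu({\dd}z_i)$ contributing $y_i^{-d}\,{\dd}y_i$:
$$I_k(h)=(2^{d-1}c_\varphi p)^k\int_{1<y_1<\dots<y_{k-1}<h\le y_k}\prod_{i=1}^k (y_{i-1}y_i)^{(d-1)/2}\prod_{i=1}^k y_i^{-d}\,{\dd}y_k\cdots{\dd}y_1.$$
Collecting exponents, $y_0=1$ contributes nothing, each internal $y_i$ ($1\le i\le k-1$) gets exponent $(d-1)-d=-1$, and $y_k$ gets $(d-1)/2-d=-(d+1)/2$. Hence the integrand is $\prod_{i=1}^{k-1}y_i^{-1}\cdot y_k^{-(d+1)/2}$.

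The $y_k$ integral over $[h,\infty)$ gives $\frac{2}{d-1}h^{-(d-1)/2}$. The ordered integral of $\prod y_i^{-1}$ over $1<y_1<\dots<y_{k-1}<h$ gives $(\ln h)^{k-1}/(k-1)!$, via the substitution $t_i=\ln y_i$ and the volume of the ordered simplex. So
$$I_k(h)=(2^{d-1}c_\varphi p)^k\cdot\frac{2}{d-1}\cdot h^{-(d-1)/2}\frac{(\ln h)^{k-1}}{(k-1)!}.$$
To match the claimed form, set $c_4:=(d-1)/2$, so that $h^{-(d-1)/2}=e^{-c_4\ln h}$. Then $h^{-(d-1)/2}(\ln h)^{k-1}/(k-1)!=c_4^{-(k-1)}\Pee(\Po(c_4\ln h)=k-1)$. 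This yields
$$I_k(h)=(c_3p)^k\,\Pee(\Po(c_4\ln h)=k-1)\quad\text{with } c_3:=2^{d-1}c_\varphi/c_4,$$
because the leftover constant is $\frac{2}{d-1}c_4^{-(k-1)}=c_4^{-k}$ exactly.

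The main points needing care are justifying the order of integration, which is fine by Tonelli since everything is nonnegative, and checking that the horizontal substitution is legitimate when the argument of $\varphi$ is near the cutoff $s_0$. It is, since $c_\varphi$ already accounts for the whole profile of $\varphi$. I expect no real obstacle beyond the exponent bookkeeping: the whole statement is an exact identity, so the delicate step is making sure the exponents of the internal heights come out to exactly $-1$. That is what produces the Poisson form.
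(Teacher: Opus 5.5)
Your proposal is correct and follows the paper's proof: the same Mecke representation and the same iterative horizontal substitution (from $x_k$ down to $x_1$) reduce $I_k(h)$ to $(2^{d-1}c_\varphi p)^k\int (y_1\cdots y_{k-1})^{-1}y_k^{-(d+1)/2}$ over the height region, and you arrive at the same constants $c_3=2^d c_\varphi/(d-1)$ and $c_4=(d-1)/2$. The only difference is in evaluating this last integral: the paper passes to log-increments $w_i=\ln y_i-\ln y_{i-1}$ and reads off $\Pee(W_1+\dots+W_{k-1}<\ln h\le W_1+\dots+W_k)$ for i.i.d.\ exponentials, whereas you integrate out $y_k$ separately and use the ordered-simplex volume $(\ln h)^{k-1}/(k-1)!$, which writes out the Poisson probability explicitly.
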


\begin{proof}
For convenience, we introduce some notations. We will write $z_0=o$ and 
$z_i = (x_i,y_i) \in \eR^{d-1}\times(0,\infty)$ for $i=0,\dots,k$.
We also set $V := \{ (y_1,\dots,y_k) \in \eR^k : 1 < y_1 < \dots < y_{k-1} < h \leq y_k \}$.
Combining~\eqref{eq:IkhMecke} and~\eqref{eq:tdistnorm} leads us to:
$$ \begin{array}{rcl} 
I_k(h) 
& = & \displaystyle
(p\lambda)^k \int\dots\int_V (y_1\dots y_k)^{-d} \int_{\eR^{d-1}}\dots\int_{\eR^{d-1}} \\
& & \displaystyle \hspace{8ex}
\prod_{i=1}^k \varphi\left( \lambda \cdot \left(\frac{\norm{x_i-x_{i-1}}}{2\sqrt{y_iy_{i-1}}}\right)^{d-1} \right) 
{\dd}x_k\dots{\dd}x_1 {\dd}y_k\dots{\dd}y_1 \\[2ex]
& = & \displaystyle
(p\lambda)^k \int\dots\int_V (y_1\dots y_k)^{-d} \cdot \prod_{i=1}^k\left(\frac{2^{d-1} (y_iy_{i-1})^{(d-1)/2}}{\lambda}\right) \\
& & \displaystyle \hspace{8ex} \cdot 
\left(\int_{\eR^{d-1}} \varphi(\norm{v}^{d-1}) {\dd}v\right)^k {\dd}y_k\dots{\dd}y_1 \\[2ex]
& = & \displaystyle
\left(p \cdot 2^{d-1} \cdot c_\varphi\right)^k \cdot \int\dots\int_V (y_1\dots y_k)^{-1} \cdot y_k^{-(d-1)/2} 
{\dd}y_k\dots{\dd}y_1,
\end{array} $$

\noindent
where we've used the substitutions 
$v_i := \left(\lambda^{1/(d-1)}/(2\sqrt{y_iy_{i-1}})\right)\cdot (x_i-x_{i-1})$ in the third line.
(If we do these substitutions iteratively starting from $i=k$ down to $i=1$ then we avoid 
having to consider the Jacobian.)

We now apply yet another change of variables. Namely, we set 
$w_1 := \ln y_1$ and $w_i := \ln y_i - \ln y_{i-1}$ for $i=2,\dots,k$.
As $\partial w_i / \partial y_j$ equals $1/y_i$ if $j=i$, and $-1/y_j$ if $j=i-1$, and zero otherwise, 
the corresponding Jacobian is 

$$\left|\det\left(\partial w_i / \partial y_j\right)_{i=1,\dots,k,\atop j=1,\dots,k} \right| = (y_1\dots y_k)^{-1}.$$

\noindent
Hence, writing $W := \{ (w_1,\dots,w_k) \in (0,\infty)^k : w_1+\dots+w_{k-1}<\ln h\leq w_1+\dots+w_k\}$
and noting that $y_k = \exp[w_1+\dots+w_k]$, we obtain

$$ \begin{array}{rcl} 
I_k(h) 
& = & \displaystyle
\left(p \cdot 2^{d-1} \cdot c_\varphi\right)^k \cdot \int\dots\int_W 
e^{ -\left(\frac{d-1}{2}\right) \cdot (w_1+\dots+w_k)}
{\dd}w_k\dots{\dd}w_1 \\[2ex]
& = & \displaystyle
\left(\frac{p \cdot 2^{d} \cdot c_\varphi}{d-1} \right)^k \cdot \int\dots\int_W 
\left(\frac{d-1}{2}\right)^k e^{ -\left(\frac{d-1}{2}\right) \cdot (w_1+\dots+w_k)}
{\dd}w_k\dots{\dd}w_1
\end{array} $$

\noindent
The integrand in the last line we recognise as the joint pdf of $k$ i.i.d.~exponential random variables
$W_1,\dots, W_k$ with common parameter $(d-1)/2$.
Hence, writing $c_3:= (2^{d} \cdot c_\varphi)/(d-1), c_4 := (d-1)/2$ we have 

$$ \begin{array}{rcl} 
I_k(h) 
& = & \displaystyle
(c_3 \cdot p)^k \cdot \Pee( W_1 + \dots + W_{k-1} < \ln h \leq  W_1 + \dots + W_k )
\\
& = & \displaystyle
(c_3 \cdot p)^k \cdot \Pee\left( \Po\left( c_4 \cdot \ln h \right) = k-1 \right), 
\end{array} $$

\noindent
as claimed by the lemma. (For the last line we use the standard elementary equivalence between sums of 
i.i.d.~exponentials and arrivals in one-dimensional homogeneous Poisson processes.)
\end{proof}

For $z,w \in \Haa^d$, we let $U_k(z,w)$ denote the expected number of shortcut-free paths between $z$ and $w$ 
with exactly $k$ internal nodes, all of which are {\em underneath} both $z$ and $w$, in the 
Palm version of the model wrt.~$z,w$. 

We denote

\begin{equation}\label{eq:psidef} 
\begin{array}{l}
\displaystyle 
\psi_n(w_0,\dots,w_{n+1}) := 1_{\left\{\begin{array}{l} 
(w_1)_d,\dots,(w_n)_d < (w_0)_d, (w_{n+1})_d \text{ and}\\
\tdist(w_i,w_j) > \rho_0 \text{ whenever } |i-j|>1\end{array}\right\}}, \\[8ex]
g_n(w_0,\dots,w_{n+1}) := \psi_n(w_0,\dots,w_{n+1}) \cdot \prod_{i=1}^{n+1} P(w_{i-1},w_i).
\end{array}
\end{equation}

\noindent
The Mecke formula gives

\begin{equation}\label{eq:UkMecke} 
\begin{array}{rcl}  
U_k(z,w) 
& := & \displaystyle 
\Ee_{p,\lambda}^{z,w} \left|\left\{
\begin{array}{l}
\text{shortcut-free paths between $z$ and $w$,} \\
\text{with exactly $k$ internal nodes,}\\
\text{all lying underneath $z,w$}
\end{array} \right\}\right| \\[4ex]
& = & \displaystyle 
(\lambda p)^{k} 
\int_{\Haa^d}\dots\int_{\Haa^d} 
g_k(z,z_1,\dots,z_k,w)
\mu({\dd}z_1)\dots\mu({\dd}z_{k}).
\end{array}
\end{equation}

\noindent
We have:

\begin{lemma}\label{lem:U}[Gracar et al.~\cite{GracarEtal}]
For every $s_0>1$ there exists a $c_5 = c_5(s_0)$ such that, 
for all $0 < p < 1$, all $0<\lambda \leq 1$, all $k \in \eN \cup \{0\}$ and all $z , w \in \Haa^d$ we have 

$$ U_k(z,w) \leq (c_5 \cdot p)^k \cdot P(z,w). $$

\end{lemma}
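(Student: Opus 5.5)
We argue by induction on $k$, with the inductive step obtained by integrating out the \emph{lowest} internal vertex of the path. For $k=0$ the path is the single edge $zw$, there is nothing to integrate, and by~\eqref{eq:UkMecke} we have $U_0(z,w)=g_0(z,w)=P(z,w)$. For $k\ge 1$, write the path as $z=z_0,z_1,\dots,z_k,z_{k+1}=w$. Let $j$ be the index of the internal vertex of minimal height. The reduced sequence $z_0,\dots,z_{j-1},z_{j+1},\dots,z_{k+1}$ is again a path whose $k-1$ internal nodes lie underneath $z,w$. It is again shortcut-free. Indeed, every pair that is non-consecutive in the reduced sequence was non-consecutive in the original path, including the new pair $(z_{j-1},z_{j+1})$, which satisfies $\tdist(z_{j-1},z_{j+1})>\rho_0$ by shortcut-freeness of the original path. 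Hence
$$\psi_k(z_0,\dots,z_{k+1})\le \sum_{j=1}^k 1_{\{z_j \text{ lowest}\}}\cdot\psi_{k-1}(z_0,\dots,\widehat{z_j},\dots,z_{k+1}).$$
Plugging this into~\eqref{eq:UkMecke} and integrating $z_j$ out first reduces everything to a two-edge estimate.

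\textbf{Two-edge estimate.} For $a,b\in\Haa^d$ with $\tdist(a,b)>\rho_0$ (i.e.\ $P(a,b)=\varphi(\sigma)$ with $\sigma>s_0$), we aim to show
$$\lambda p\int 1_{\{v_d<m\}}\,P(a,v)P(v,b)\,\mu({\dd}v)\;\le\; C p\,P(a,b)\cdot \min\left(1,\left(m/\max(a_d,b_d)\right)^{(d-1)/2}\right),$$
for every threshold $m\le\min(a_d,b_d)$. Write $v=(x,y)$, $a=(x_a,y_a)$, $b=(x_b,y_b)$. By the triangle inequality, $\norm{x-x_a}\ge\norm{x_a-x_b}/2$ or $\norm{x-x_b}\ge \norm{x_a-x_b}/2$. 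Suppose the first holds. Then by~\eqref{eq:tdistnorm} the argument of $\varphi$ in $P(a,v)$ is at least $2^{-(d-1)}\sigma\,(y_b/y)^{(d-1)/2}$. Since $\sigma>s_0$ and $\varphi(s)=s^{-2}$ there, we get
$$P(a,v)\le C\,P(a,b)\,(y/y_b)^{d-1}.$$
The remaining $x$-integral of $P(v,b)$ is computed exactly as in the proof of Lemma~\ref{lem:Ikh}: it equals $2^{d-1}c_\varphi (y y_b)^{(d-1)/2}/\lambda$. Together with the volume factor $y^{-d}$, the $y$-integrand becomes $C\,P(a,b)\,y^{(d-3)/2}y_b^{-(d-1)/2}$. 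Integrating over $0<y<m\le y_b$ gives $C\,P(a,b)\,(m/y_b)^{(d-1)/2}$, which is at most $C\,P(a,b)$. The other case is symmetric, with $a$ and $b$ swapped. This is precisely where the shortcut-free condition (giving $\sigma>s_0$) and the ``underneath'' condition (giving $y<y_a,y_b$) are used: without them the $y$-integral diverges at $0$.

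\textbf{Main obstacle: avoiding a factor $k!$.} Summing naively over the position $j$ of the lowest vertex gives $U_k\le k\,Cp\,U_{k-1}$, which yields a $k!$ rather than $c_5^k$. To beat this I would exploit the extra factor $(m/\max(y_{j-1},y_{j+1}))^{(d-1)/2}$, where $m$ is the minimum height of the remaining internal vertices, which the two-edge estimate provides when the removed vertex is required to be lowest. I would strengthen the induction hypothesis to carry a weight in the lowest height, for instance a bound for paths whose internal nodes lie below a prescribed level, and sum the resulting geometric-type factors over $j$. Failing that, I would follow Gracar et al.\ (their Lemmas~2.2--2.3) and decompose the path at the lowest vertex into two sub-paths whose internal nodes lie between the endpoint heights. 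I would then prove the bound via a generating-function or convolution recursion in $k$, whose solution is of the form $(c_5p)^k$.
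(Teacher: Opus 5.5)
\textbf{Gap: the induction does not close.} Your two-edge estimate is correct and is essentially the paper's computation of $I_1+I_2$. There is one slip: adding the two cases gives $(m/a_d)^{(d-1)/2}+(m/b_d)^{(d-1)/2}$, which is governed by $\min(a_d,b_d)$, not $\max(a_d,b_d)$.

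Integrating out the lowest internal vertex gives only $U_k\le k\,Cp\,U_{k-1}$, hence a factor $k!$, as you note yourself. Neither proposed remedy is carried out. The weighted hypothesis is too vague to check: the extra factors are at most $1$ but need not sum to $O(1)$ over the $k$ positions, e.g.\ when the remaining heights are comparable.

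The concrete fallback does not work either. Cutting at the lowest internal vertex $z_j$ gives pieces $z_0,\dots,z_j$ and $z_j,\dots,z_{k+1}$ whose internal nodes lie \emph{above} the endpoint $z_j$. These are not the paths counted by $U$, so the induction hypothesis says nothing about them. Your two-edge estimate does not apply to them either, since its hypothesis that the middle vertex lies below both endpoints fails. The paper cuts at the opposite end.

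\textbf{Missing idea: split at the highest internal vertex.} Let $z_\ell$ be the \emph{highest} internal vertex. Then each piece has all its internal nodes underneath both of its endpoints: underneath $z_0$, resp.\ $z_{k+1}$, by assumption, and underneath $z_\ell$ by maximality. Each piece inherits shortcut-freeness, and $z_\ell$ lies underneath $z_0,z_{k+1}$ with $\tdist(z_0,z_{k+1})>\rho_0$. Hence $\psi_k(z_0,\dots,z_{k+1})\le\sum_{\ell=1}^{k}\psi_1(z_0,z_\ell,z_{k+1})\,\psi_{\ell-1}(z_0,\dots,z_\ell)\,\psi_{k-\ell}(z_\ell,\dots,z_{k+1})$. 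Integrating out both halves first gives
\[ U_k(z,w)\le\sum_{\ell=1}^{k}\lambda p\int_{\Haa^d}\psi_1(z,v,w)\,U_{\ell-1}(z,v)\,U_{k-\ell}(v,w)\,\mu({\dd}v). \]
Induct on $U_j\le a_j(cp)^jP$ with $a_j$ the Catalan numbers. The sum over the split position is then the convolution $\sum_{\ell=1}^{k}a_{\ell-1}a_{k-\ell}=a_k\le 4^k$.

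This is where exponential rather than factorial growth comes from. Fixing the highest vertex splits the path into two independent sub-problems of the same type. Fixing the lowest vertex instead leaves a full $U_{k-1}$ for every choice of $j$.

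What remains is $\lambda p\int g_1(z,v,w)\,\mu({\dd}v)$. Your two-edge estimate with $m=\min(z_d,w_d)$ bounds this by $Cp\,P(z,w)$. Taking $c=C$ closes the induction, and $c_5=4C$ works.
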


\noindent
This is essentially Lemmas 2.2 and 2.3 in the work of Gracar et al.~\cite{GracarEtal}.
As mentioned previously the model studied in that paper strictly speaking does not subsume the model considered here, but 
the arguments carry over. Their model also has several parameters that can be tuned allowing for a lot of generality, 
and some amount of translation is needed when checking that their proof works in our setting.
What is more, we've found that the arguments of Gracar et al.~giving Lemma~\ref{lem:U} can be simplified somewhat. 
Gracar et al.~use an elaborate encoding of the paths counted by $U_k$ using 
binary trees, which leads to the Catalan numbers entering into the proof. 
It is possible to show directly  that the standard recursion that defines the Catalan numbers applies to some of the expressions involved, 
which removes the need for the mentioned encoding.
For the convenience of the reader, we include such a simplified proof of Lemma~\ref{lem:U} for the specific 
model we consider here, in Appendix~\ref{sec:U}.

\begin{proofof}{Proposition~\ref{prop:Snh}}
Given a path $z_0, z_1,\dots,z_n$ and $0\leq i\leq n$ we shall say that $z_i$ is a {\em left-to-right maximum} if 
$z_0,\dots,z_{i-1}$ lie underneath $z_i$. 
For $n \in \eN$ and $J \subseteq \{0,1,\dots,n\}$ let 
$S_{n,J}(h)$ denote the expected number of paths $z_0=o,z_1,\dots,z_n$ such that
{\bf a)} $\tdist(z_i,z_j) > \rho_0$ whenever $|i-j|>1$ (the path is shortcut-free), and
{\bf b)} $(z_1)_d, \dots, (z_{n-1})_d < h \leq (z_n)_d$, and {\bf c)} the 
left-to-right maxima are precisely $z_j : j \in J$.
(Note that necessarily $0, n \in J$.)

For the moment, let us fix $n$ and $J$ and let us denote by $\eta(z_0,\dots,z_n)$ the indicator function that 
{\bf a)},{\bf b)},{\bf c)} are satisfied and set 
$h(z_0,\dots,z_n) := \eta(z_0,\dots,z_n) \cdot \prod_{i=1}^n P(z_{i-1},z_i)$.
Let us enumerate $J$ as $j_0=0 < j_1 < \dots < j_k = n$. We remark that

$$ \eta(z_0,\dots,z_n) \leq \xi_{k}(z_{j_1},\dots,z_{j_k}) \cdot \prod_{\ell=1}^k
\psi_{n_\ell}(z_{j_{\ell-1}}, \dots, z_{j_\ell}), $$

\noindent 
where $n_{\ell} := j_{\ell}-j_{\ell-1}-1$ and $\xi, \psi$ are as defined in~\eqref{eq:xigdef},~\eqref{eq:psidef}. Hence also 

\begin{equation}\label{eq:hineq} h(z_0,\dots,z_n) \leq \xi_{k}(z_{j_1},\dots,z_{j_k}) \cdot \prod_{\ell=1}^k
g_{n_\ell}(z_{j_{\ell-1}}, \dots, z_{j_\ell}), 
\end{equation}

\noindent
where $g_{n}$ is as defined in~\eqref{eq:psidef} for $n \geq 1$ and $g_0(z,w) = P(z,w)$.
The Mecke formula gives

$$ \begin{array}{rcl}
   S_{n,J}(h) 
   & = & \displaystyle 
   (\lambda p)^n \int_{\Haa^d}\dots\int_{\Haa^d} h(z_0,\dots,z_n) \mu({\dd}z_n)\dots\mu({\dd}z_1) \\[2ex]
   & \leq & \displaystyle
   (\lambda p)^k \int_{\Haa^d}\dots\int_{\Haa^d} \xi_{k}(z_{j_1},\dots,z_{j_k})  \\
   & & \displaystyle 
   \cdot
   \prod_{\ell=1}^k \left( (\lambda p)^{n_\ell} \int_{\Haa^d}\dots\int_{\Haa^d}
   g_{n_\ell}(z_{j_{\ell-1}}, \dots, z_{j_\ell}) 
   \mu({\dd}z_{j_\ell-1})\dots\mu({\dd}z_{j_{\ell-1}+1})
   \right) \\
   & & \displaystyle 
   \mu({\dd}z_{j_k})\dots\mu({\dd}z_{j_1})  \\
   & = & 
   \displaystyle
   (\lambda p)^k \int_{\Haa^d}\dots\int_{\Haa^d} \xi_{k}(z_{j_1},\dots,z_{j_k})
   \prod_{\ell=1}^k U_{n_\ell}(z_{j_{\ell-1}},z_{j_{\ell}}) \mu({\dd}z_{j_k})\dots\mu({\dd}z_{j_1})  \\
   & \leq & \displaystyle 
   (c_5 \cdot p)^{n_1+\dots+n_k} \cdot (\lambda p)^k \int_{\Haa^d}\dots\int_{\Haa^d} \xi_{k}(z_{j_1},\dots,z_{j_k})
   \prod_{\ell=1}^k P(z_{j_{\ell-1}},z_{j_{\ell}}) \mu({\dd}z_{j_k})\dots\mu({\dd}z_{j_1}) \\
   & = & \displaystyle 
   (c_5 \cdot p)^{n - k} \cdot I_k(h) \\
   & = & \displaystyle 
   (c_5 \cdot p)^{n - k} \cdot (c_3 \cdot p)^k \cdot \Pee\left( \Po\left( c_4 \cdot \ln h \right) = k-1 \right) \\
   & \leq & \displaystyle 
   \left(\max(c_3,c_5) \cdot p\right)^{n} \cdot \Pee\left( \Po\left( c_4 \cdot \ln h \right) < n \right),
   \end{array} $$

\noindent 
where the second line follows from~\eqref{eq:hineq}, the third line uses~\eqref{eq:UkMecke}, the fourth line 
uses Lemma~\ref{lem:U}, the fifth line uses~\eqref{eq:IkhMecke} and the definition of $n_\ell$, and 
the sixth line uses Lemma~\ref{lem:Ikh}.
It follows that 

$$ \begin{array}{rcl} 
S_n(h) 
& = & 
\sum_{J \subseteq \{0,\dots,n\}, \atop J \ni 0,n} S_{n,J}(h) \\[2ex]
& \leq & 
2^n \cdot 
\left(\max(c_3,c_5) \cdot p\right)^{n} \cdot \Pee\left( \Po\left( c_4 \cdot \ln h \right) < n \right) \\[2ex]
& = & 
\left(2 \cdot \max(c_3,c_5) \cdot p\right)^{n} \cdot \Pee\left( \Po\left( c_4 \cdot \ln h \right) < n \right).
\end{array} $$

\noindent 
So the result holds with $c_1 := 2 \max(c_3,c_5), c_2 := c_4$.
\end{proofof}

\subsection{Reduced paths.\label{sec:redpath}}

In this section, we let $s_0 > 1$ and $x_0 \in \eN$ be large constants, to be determined more precisely as we go along, and 
we let $\rho_0$ be as defined in~\eqref{eq:rho0def}.

A sequence of points $z_0,\dots,z_n \in \Zcal_b \cup \{o\}$ is called a {\em reduced path}
if there exists a partition 

$$N \uplus S \uplus J =  \{1,\dots,n\}, $$

\noindent 
such that the conditions {\bf(i)}--{\bf(v)} below are met.
While digesting the upcoming conditions for a reduced path and the lemmas that follow it, it may aid the reader to 
keep in mind that $N,S,J$ stand for ``normal edges'', respectively ``shortcuts'', respectively ``jumps''.
For notational convenience, whenever the sequence $z_0,\dots,z_n$ is clear from the context, we will write 

$$ r_i := \dist(z_{i-1},z_i), \quad B_i := \BGab(z_{i-1},z_i), \quad E_i := E(z_{i-1},z_i)$$ 

\noindent
and we let $c_i$ denote the midpoint of the 
(hyperbolic) line segment between $z_{i-1}$ and $z_i$ (in other words $c_i$ is the center of $B_i = B(c_i,r_i/2)$).
We also define

\begin{equation}\label{eq:Nixdef} 
N_{i,x} := \{ j \in N : j \leq i, \dist(c_j,c_i) < x, |r_j-r_i| < 5x \}. 
\end{equation}

\noindent
We can now state the conditions for a reduced path:

\begin{enumerate}
 \item If $i \in N$ then $E_i$ holds, and;
 \item $i \in S$ if and only if $\tdist(z_{i-1},z_i) \leq \rho_0$, and;
 \item If $i \in J$ then $i-1 \in N$ and there is some $x \in \{x_0,x_0+1,\dots\}$ such that
 $|N_{i-1,x}|\geq x^{4}$ and $r_i \leq r_{i-1} + 10x$.
 \item $\tdist(z_i,z_j) > \rho_0$ whenever $|i-j|>1$, and;
 \item If $i\in N, x \in \{x_0,x_0+1,\dots\}$ are such that $|N_{i,x}|\geq x^{4}$
 then $\dist(z_i,z_j) > r_i + 10x$ for all $j > i+1$.
\end{enumerate}

The name ``reduced path'' refers to the following fact:

\begin{proposition}\label{prop:pathcontainsredpath}
If $z_0,\dots,z_n \in \Zcal_b \cup \{o\}$ is a path in the black subgraph of the Delaunay graph, 
then there is a subsequence $z_{j_0}, z_{j_1}, \dots, z_{j_k}$ with $j_0=0<j_1< \dots<j_k=n$ that is a
reduced path.
\end{proposition}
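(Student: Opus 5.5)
We extract the subsequence with a greedy left-to-right procedure and then check conditions (i)--(v) one at a time. Set $j_0=0$. Suppose the indices $j_0<\dots<j_m$ have been chosen with $j_m<n$, and write $w_\ell := z_{j_\ell}$. The next index $j_{m+1}$ is chosen by three rules, applied in order of priority.

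\emph{Shortcut.} If some $t>j_m+1$ satisfies $\tdist(z_{j_m},z_t)\le\rho_0$, take $j_{m+1}$ to be the \emph{largest} such $t$ and put $m+1\in S$.

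\emph{Jump.} If no shortcut exists, the last step $m$ is in $N$, and there are $x\ge x_0$ and $t>j_m+1$ with $|N_{m,x}|\ge x^4$ and $\dist(w_m,z_t)\le r_m+10x$, take the largest such $t$ over all admissible $x$ and put $m+1\in J$. Here $N_{m,x}$ is computed from the reduced path built so far, so it does not depend on later choices.

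\emph{Normal step.} Otherwise set $j_{m+1}=j_m+1$. If $\tdist(w_m,z_{j_m+1})\le\rho_0$, put $m+1\in S$; if not, put $m+1\in N$. Since $z_{j_m}z_{j_m+1}$ is a Delaunay edge, the event $E(z_{j_m},z_{j_m+1})$ holds (see the remark after~\eqref{eq:Edef}), so (i) is satisfied.

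The process terminates because the indices strictly increase and $n$ is finite. To make the last index exactly $n$, we allow $t=n$ in both the shortcut and jump rules; the normal step always advances by one, so the procedure eventually lands on $n$.

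Conditions (i)--(iii) then follow almost directly. Condition (ii) needs the assignment to depend only on $\tdist$ of the chosen pair. Any step with $\tdist\le\rho_0$ goes into $S$: this is clear for shortcut steps and for normal steps. The delicate case is a jump step with $\tdist(w_m,w_{m+1})\le\rho_0$. This cannot happen, because the shortcut rule takes priority and would already have selected a $t$ with $\tdist\le\rho_0$. Condition (iii) holds by construction: a jump is only made when step $m$ is in $N$, there is an $x$ with $|N_{m,x}|\ge x^4$, and $r_{m+1}=\dist(w_m,z_t)\le r_m+10x$.

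Conditions (iv) and (v) are the main obstacle. They are non-local: they compare $w_i$ with every later $w_j$ for $j>i+1$. The natural argument is a maximality argument. Suppose $\tdist(w_i,w_j)\le\rho_0$ for some $j>i+1$. Then at the moment we left $w_i$, the index $j_j>j_{i+1}$ was available as a shortcut target, and the shortcut rule would have chosen an index at least $j_j$. This contradicts $j_{i+1}<j_j$.

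Condition (v) is handled the same way. If $i\in N$, $|N_{i,x}|\ge x^4$, and $\dist(w_i,w_j)\le r_i+10x$ with $j>i+1$, then at the step leaving $w_i$ a jump (or a shortcut) to an index at least $j_j$ was available. But $i+1$ was obtained by a normal step or by a short jump, which is a contradiction.

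The subtle point in both arguments is that the sets $N_{i,x}$ must already be determined when $w_i$ is chosen. This holds because $N_{i,x}$ only involves indices $j\le i$. Each maximality argument therefore uses only data available at the time of the choice, and the candidate targets range over all later indices of the original path, which include every later reduced-path vertex. I expect checking this consistency carefully, in particular that the priority between shortcut and jump never breaks (ii) or (v), to be where most of the work lies.
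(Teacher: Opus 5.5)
Your construction has a genuine gap in condition (v), and it comes from giving shortcuts priority over jumps. Suppose the current step $m$ is in $N$, $|N_{m,x}|\ge x^4$ for some $x\ge x_0$, and the largest shortcut target is $t_1>j_m+1$. Suppose also there is some $t_2>t_1$ with $\tdist(w_m,z_{t_2})>\rho_0$ but $\dist(w_m,z_{t_2})\le r_m+10x$. This window is nonempty, since $r_m>\rho_0$. Your rule moves to $w_{m+1}=z_{t_1}$ and puts $m+1\in S$, and nothing prevents $z_{t_2}$ from becoming a later vertex of the reduced path. For instance, take $t_2=t_1+1$ and assume there is no shortcut from $z_{t_1}$; a jump from $z_{t_1}$ is excluded because $m+1\notin N$. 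Then the next step is a normal step to $w_{m+2}=z_{t_2}$, and $\dist(w_m,w_{m+2})\le r_m+10x$ violates (v) for $i=m$, $j=m+2$. Your argument for (v) only considers the cases where $i+1$ was reached by a normal step or a jump, and misses the case where it was reached by a shortcut to an index smaller than an available jump target. This matters, since (v) is exactly what drives Lemma~\ref{lem:Oixcard} and Lemma~\ref{lem:JixUB}.

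The fix, which is what the paper does, is to drop the priority and take a single joint maximum. Set $j_{m+1}:=\max(\{j_m+1\}\cup U_m\cup V_m)$, where $U_m$ is the set of shortcut targets $t>j_m+1$ and $V_m$ is the set of jump targets $t>j_m+1$, with $V_m=\emptyset$ unless $m\in N$. Then label the step by the chosen pair:
\begin{itemize}
\item $S$ if $\tdist(w_m,z_{j_{m+1}})\le\rho_0$;
\item $N$ if $j_{m+1}=j_m+1$ and $\tdist>\rho_0$;
\item $J$ otherwise. In this case $j_{m+1}\in V_m\setminus U_m$, so (iii) holds.
\end{itemize}
Condition (ii) now follows from the labelling rather than from priority. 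Both (iv) and (v) follow from the one maximality: any later vertex $w_j$ with $j>m+1$ has original index larger than $j_{m+1}\ge j_m+1$, so it can lie in neither $U_m$ nor $V_m$. The rest of your verification is fine: (i) for normal steps, termination at $n$, and the observation that $N_{m,x}$ is already determined when $w_m$ is left.
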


This can be shown via a straightforward iterative procedure. We spell it out for completeness, 
in Appendix~\ref{sec:pathcontainsredpath}.

We define $R_n(h)$ analogously to $S_n(h)$:

$$
R_n(h) :=  
\Ee_{p,\lambda}^o \left|\left\{\begin{array}{l}
                  \text{reduced paths of length $n$ between $o$ and} \\
                  \text{some $z \in \Zcal_b \cap \left(\eR^{d-1} \times [h,\infty)\right)$ with all internal} \\
                  \text{nodes in $\eR^{d-1} \times (0,h)$}
\end{array} \right\}\right|.
$$

The goal of the remainder of this section is to prove the following result.

\begin{proposition}\label{prop:Rh}
There exist $s_0>1$ and $x_0 \in \eN$ such that, for all $0\leq p \leq 1/2$, all $0<\lambda\leq 1$, all $n \in \eN$ and all $h>1$, 
we have
$R_n(h) \leq S_n(h)$.
\end{proposition}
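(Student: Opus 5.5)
By the Mecke formula, both $R_n(h)$ and $S_n(h)$ are integrals over the same domain. Write $z_0=o$. Then
\[ R_n(h) \le (\lambda p)^n \int\cdots\int \eta(z_0,\dots,z_n)\, \Pee^{z_1,\dots,z_n}_{p,\lambda}\bigl(\text{$z_0,\dots,z_n$ is reduced}\bigr)\,\mu({\dd}z_n)\cdots\mu({\dd}z_1), \]
where $\eta$ encodes the height constraints and shortcut-freeness. Condition (iv) makes every reduced path shortcut-free. The integrand of $S_n(h)$ is $\eta\cdot\prod_{i=1}^n P(z_{i-1},z_i)$. So it suffices to show, pointwise in $(z_0,\dots,z_n)$, that
\[ \Pee\bigl(\text{$z_0,\dots,z_n$ is reduced}\bigr) \le \prod_{i=1}^n P(z_{i-1},z_i). \]
Given the positions, the partition $(N,S,J)$ is almost determined. $S$ is fixed by (ii). Given $N$, the set $J$ is its complement, and conditions (iii) and (v) are deterministic in the positions. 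The only randomness is in (i): the events $E_i$ for $i\in N$, which depend only on $\Zcal_w$. Summing over the at most $2^n$ choices of $N$, I will aim for the stronger bound
\[ \Pee\Bigl(\bigcap_{i\in N} E_i\Bigr) \le 2^{-n}\prod_{i=1}^n P(z_{i-1},z_i) \]
for every admissible $N$, meaning one compatible with (ii)--(v). This bound is what has to be proved.

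The first step is a cost accounting. Edges with $i\in S$ have $P=1$ and cost nothing. For $i\in N\cup J$ we have $\tdist(z_{i-1},z_i)>\rho_0$, so $P_i=\varphi(\lambda e^{(d-1)\tdist/2})=s_i^{-2}$ with $s_i>s_0$. Since $\dist\ge\tdist$ by \eqref{eq:tdistdist}, Lemma~\ref{lem:edgeprob} gives $\Pee(E_i)\le K\exp[-c\lambda e^{(d-1)r_i/2}]\le K e^{-c s_i}$. For large $s_0$ this is far smaller than $2^{-1}s_i^{-2}$, indeed smaller than $(2^{-1}s_i^{-2})^{M}$ for any fixed $M$. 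The plan is to process $i\in N$ in increasing order and use Corollary~\ref{cor:edgeprob} to bound $\Pee(E_i\mid \bigcap_{j\in N,j<i}E_j)$ whenever $B_i$ has overlap less than $\delta\mu(B_i)$ with the earlier Gabriel balls. Each good normal edge $i$ then gains a factor $K e^{-c\lambda e^{(d-1)r_i/2}}$. This factor is used to pay for its own $P_i$, and also for the costs of those jump edges $J$ that will be charged to it.

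The second step handles two problems: normal edges whose Gabriel ball overlaps earlier ones substantially, and jump edges, which carry no event at all. For the overlap, Lemma~\ref{lem:elemgeo} applies. After comparing balls of radii differing by at most $5x$, a ball $B_j$ can cover a $\delta$-fraction of $B_i$ only if $\dist(c_j,c_i)=O(1)$ and $|r_j-r_i|=O(1)$. In general, $\mu(B_i\cap B_j)\le Ke^{-c\,\dist(c_i,c_j)}e^{O(|r_i-r_j|)}\mu(B_i)$, and balls much smaller than $B_i$ contribute little volume. Grouping the earlier balls by $x$-scale, the total overlap is at most $\sum_x |N_{i,x}|\,K e^{-c' x}\mu(B_i)$, plus small-ball terms. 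This is below $\delta\mu(B_i)$ unless $|N_{i,x}|\ge x^4$ for some $x\ge x_0$, provided $x_0$ is large, since $\sum_x x^4e^{-c'x}$ is then small.

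So a bad normal edge $i$ is exactly one where (v) is triggered. By (v), no later vertex returns within distance $r_i+10x$ of $z_i$. By (iii), a jump $i+1$ has $r_{i+1}\le r_i+10x$. I then redistribute costs over the $x^4$ edges in $N_{i,x}$. These edges have comparable lengths, within $5x$, so each has $\Pee(E_j)$ at most about $\exp[-c\lambda e^{(d-1)(r_i-5x)/2}]$. Their product, of $x^4$ factors, overwhelms $P_i P_{i+1}$, because $P$ is only polynomial in $e^{r}$ (up to $e^{O(x)}$ factors). Condition (v) ensures that each group $N_{i,x}$ is charged only boundedly often: after index $i$ the path leaves the $(r_i+10x)$-neighbourhood, so later bad edges at the same location and scale cannot reuse the same group.

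The main obstacle is this charging scheme. I must make sure each normal edge is charged only a bounded number of times across scales $x$ and across bad indices, and I must also control the small-$\lambda$ regime. There the term $\lambda e^{(d-1)r/2}$ is the relevant quantity rather than $r$ itself, and $\tdist>\rho_0$ guarantees $\lambda e^{(d-1)r/2}>s_0$ on all non-shortcut edges. The approach I will try first is an induction over $i$, maintaining the invariant that the conditional probability accumulated so far is at most $\prod_{j\le i}(P_j/2)$, multiplied by a reserve factor that is stored on the good edges still available for later charging.
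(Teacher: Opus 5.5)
There is a genuine gap, and it sits in the step you yourself flag as the main obstacle. Your Mecke reduction to a pointwise bound on $Q$ and the sum over partitions match the paper. So do the ingredients you list: Corollary~\ref{cor:edgeprob}, the superpolynomial gap between $Ke^{-cs}$ and $s^{-2}$, and using (v) to stop reuse.

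First, a minor point. The bound $2^{-n}\prod_iP(z_{i-1},z_i)$ is false when $S\neq\emptyset$: if all edges are shortcuts, then $N=\emptyset$ and the left side is $1$. Since $S$ is fixed by (ii), there are at most $2^{n-|S|}$ partitions, so $2^{-(n-|S|)}$ is what you need.

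The real gap is conditioning in index order, combined with the claim that a normal edge has large overlap with earlier Gabriel balls only when (v) fires. An earlier ball can be much larger and swallow a later one.
\begin{itemize}
\item Take $j,j+1\in N$ with $z_{j-1},z_j$ near the two feet of a large geodesic semicircle and $z_{j+1}$ at its top. This is compatible with (ii)--(v) and need not trigger (v).
\item Then $B_{j+1}\subseteq B_j$, and in fact $E_j\subseteq E_{j+1}$. The ball witnessing $E_j$ contains $z_{j+1}$ by strict convexity, and the ball internally tangent to it at $z_j$ passing through $z_{j+1}$ witnesses $E_{j+1}$.
\item So $\Pee(E_{j+1}\mid E_j)=1$, and there is no factor to collect for edge $j+1$.
\end{itemize}
Your overlap bound $\sum_x|N_{i,x}|Ke^{-c'x}\mu(B_i)$ silently drops the factor $e^{O(|r_i-r_j|)}$ from your own pairwise estimate, and that factor is unbounded here. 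Lemma~\ref{lem:elemgeo} only compares balls of equal radius.

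Even at comparable radii your claim fails. Two normal edges whose Gabriel balls (nearly) coincide, e.g.\ two diameters of one ball, are compatible with (ii)--(v) and have $|N_{i,x}|<x^4$ for all $x\ge x_0$. Yet the overlap is (nearly) all of $B_i$; the tail $\sum_{x\ge x_0}x^4e^{-c'x}$ says nothing about scales below $x_0$.

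Finally, you cannot pay for a jump with the product of $\Pee(E_k)$ over the $x^4$ edges of $N_{i,x}$: those events are dependent.

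The missing idea is to order by radius rather than by index.
\begin{itemize}
\item The paper greedily selects $M\subseteq N$. It takes $i_1\in N$ with maximal $r_i$, deletes $O_{i_1,x_0}$ (comparable, slightly smaller balls of \emph{any} index), takes a maximiser of what remains, and so on.
\item It then factors $\Pee(\bigcap_{i\in M}E_i)$ so that each $E_{i_k}$ is conditioned only on the events of later-chosen, hence smaller, balls.
\item Their overlap with $B_{i_k}$ is below $\delta\mu(B_{i_k})$ by Corollary~\ref{cor:overlapdelta}. That corollary combines Lemma~\ref{lem:elemgeo} at every scale $x>x_0$ with $|O_{i,x}|\le 16x^4+1$ from Lemma~\ref{lem:Oixcard}, which is one place (v) enters. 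So Corollary~\ref{cor:edgeprob} applies.
\item The deleted near-coincident edges number at most $16x_0^4$ per kept edge, and all have $q\ge q_{i_k}$. They are paid for by taking roots: $\prod_{i\in M}q_i\le(\prod_{i\in N}q_i)^{1/(16x_0^4+1)}\le 8^{-|N|}\prod_{i\in N}p_i^2$ by the choice~\eqref{eq:choiceofs0} of $s_0$. This is where the superpolynomial smallness you observed is really used.
\item Jumps are paid from the surplus. The second copy of $\prod_{i\in N}p_i$ supplies $p_{j-1}$ for each $j\in J$.
\item Each normal edge also sets aside a factor $1-x^{-2}$ per scale. Corollary~\ref{cor:JxUB} ($|N|\ge x^4|J_x|$, via Lemma~\ref{lem:JixUB}) turns this into $e^{-x^2}\le e^{-10(d-1)x}$ per jump in $J_x$.
\item Finally, (iii) gives $p_j\ge e^{-10(d-1)x}p_{j-1}$.
\end{itemize}
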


We will call a sequence of points $z_0,\dots,z_n \in \Haa^d$ for which some 
partition $N \uplus S \uplus J = \{1,\dots,n\}$ exists such that {\bf(ii)},{\bf(iii)},{\bf(iv)},{\bf(v)}
are satisfied a {\em viable sequence}, and  
we will call such a partition a {\em viable partition} (for $z_0,\dots,z_n$). 
Perhaps superfluously, we point out that whether or not $z_0,\dots,z_n$ is viable depends only on 
the geometric constellation of $z_0,\dots,z_n$ and not on the (other points of the) Poisson point process, but 
in order to determine whether {\bf(i)} also holds -- i.e.~the viable sequence is a reduced path -- we'd typically 
have to reveal the white Poisson process $\Zcal_w$ inside some of the Gabriel balls $B_i$.
Also, a given viable sequence might have more than one corresponding viable partition.
We denote by $\Vcal = \Vcal(z_0,\dots,z_n)$ the family of all viable partitions for
$z_0,\dots,z_n$.

For $z_0,\dots,z_n$ a viable sequence, we denote
the probability that there is some $(N,S,J) \in \Vcal$ such that {\bf(i)} is satisfied 
(in other words, that $z_0,\dots,z_n$ is a reduced path) by:

$$ Q(z_0,\dots,z_n) := \Pee_{p,\lambda}\left( \bigcup_{(N,S,J) \in \Vcal}\bigcap_{i \in N} E_i \right). $$

\noindent
The key observation needed for the proof of Proposition~\ref{prop:Rh} is stated in the following lemma.

\begin{lemma}\label{lem:QP}
There exist $s_0 > 1$ and $x_0 \in \eN$ such that, for all $p \leq 1/2$ and $0 < \lambda \leq 1$ and 
every viable sequence $z_0,\dots,z_n$, we have 

$$ Q(z_0,\dots,z_n) \leq \prod_{i=1}^n P(z_{i-1},z_i), $$

\noindent
with $P(.,.)$ as defined by~\eqref{eq:Pdef} above.
\end{lemma}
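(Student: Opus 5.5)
The plan is to fix one viable partition $(N,S,J)\in\Vcal$, bound $\Pee_{p,\lambda}(\bigcap_{i\in N}E_i)$ by a product with room to spare, and then take a union bound over $\Vcal$. Write $s_i:=\lambda e^{((d-1)/2)\tdist(z_{i-1},z_i)}$, so that $P(z_{i-1},z_i)=\varphi(s_i)$.

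\textbf{Reduction.} For $i\in S$, condition (ii) gives $\tdist\leq\rho_0$, hence $s_i\le s_0$ and $P(z_{i-1},z_i)=1$. For $i\notin S$ we have $s_i>s_0$, so $P(z_{i-1},z_i)=s_i^{-2}\le s_0^{-2}$. Also $\dist\ge\tdist$ by~\eqref{eq:tdistdist}, so $\lambda e^{((d-1)/2) r_i}\ge s_i$. Since $S$ is fixed by (ii), $|\Vcal|\le 2^{|N\cup J|}$. It therefore suffices to show, for each fixed partition,
$$\Pee\Big(\bigcap_{i\in N}E_i\Big)\le\prod_{i\in N\cup J}\tfrac12 s_i^{-2}.$$
Because the factor $2$ per edge is tiny next to $s_i^{2}\geq s_0^2$, this slack can be absorbed once $s_0$ is large. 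The target product includes the jump edges $J$, on which no event is required, so their cost must be paid by the other edges.

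\textbf{Good versus bad edges.} Process $i\in N$ in increasing order, conditioning on $\bigcap_{j\in N,\,j<i}E_j$. Call $i$ \emph{good} if $\mu\big(B_i\cap\bigcup_{j\in N,j<i}B_j\big)<\delta\mu(B_i)$. For good $i$, Corollary~\ref{cor:edgeprob} gives a conditional factor $K\exp[-c\lambda e^{((d-1)/2)r_i}]\le K e^{-c s_i}\le s_i^{-M}$ for any fixed $M$ (say $M=10$), once $s_0=s_0(M)$ is large. So each good edge pays its own $s_i^{-2}$ and leaves a surplus factor $s_i^{-(M-2)}$.

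For a bad $i$, use Lemma~\ref{lem:elemgeo}, extended to balls of unequal radii via~\eqref{eq:volballrminuss}, to show that most of the overlap comes from earlier balls that are close to $B_i$ in both centre and radius. Precisely, $B_j$ with $\dist(c_j,c_i)\ge x$ or $|r_j-r_i|\ge 5x$ contributes at most about $e^{-c'x}\mu(B_i)$. Condition (iv) (shortcut-freeness) gives a packing bound on how many such $j$ can sit at each scale. Summing over scales, a bad $i$ must satisfy $|N_{i,x}|\ge x^4$ for some $x\ge x_0$, once $x_0$ is large relative to $\delta$.

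\textbf{Charging bad edges and jumps.} A bad $N$-edge $i$, and a jump $i+1\in J$ after it (condition (iii)), are charged to the $\ge x^4$ edges $j\in N_{i,x}$. These edges have $|r_j-r_i|<5x$, and jumps have $r_{i+1}\le r_i+10x$. So the required factor $s_i^{2}$ (or $s_{i+1}^2$) is at most $e^{O(x)}$ times a power of $s_j$. This is provided one can compare $\tdist$ and $\dist$ for non-shortcut edges, using $\dist\le\tdist+O(1)$ whenever $\tdist$ is large, which holds here since $s_i>s_0$. Splitting this charge over $x^4$ edges costs each of them a factor like $e^{O(1/x^3)}s_j^{O(1/x^4)}$. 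I would apply this charging recursively in order, so that a charged $j$ whose own requirement was already paid by others still carries surplus. Condition (v) is what prevents double charging: once $|N_{i,x}|\ge x^4$, all later vertices lie at distance $>r_i+10x$. Consequently, later clusters at a comparable scale near $c_i$ cannot reuse the same $j$, except over boundedly many, geometrically growing scales $x$. The sum over $x$ of the charges, $\sum_x O(x^{-3})$, is then finite and small for $x_0$ large.

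\textbf{Main obstacle.} I expect the charging bookkeeping to be the hardest step. It requires a rigorous bound on how many bad $i$ (over all $x$) can charge a given $j$, using (iii)–(v) together with the hyperbolic overlap geometry. It also requires making the $\tdist$ versus $\dist$ comparison and the unequal-radii version of Lemma~\ref{lem:elemgeo} precise enough that the exponents in $x$ actually close.
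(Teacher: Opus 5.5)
\textbf{There is a genuine gap in your treatment of bad edges.} Your reduction and union bound over $\Vcal$ match the paper. The problem is the implication ``$\mu\bigl(B_i\cap\bigcup_{j\in N,\,j<i}B_j\bigr)\ge\delta\mu(B_i)$ forces $|N_{i,x}|\ge x^4$ for some $x\ge x_0$'', which is false in two ways.

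\emph{Larger earlier balls.} You claim that every $B_j$ with $\dist(c_j,c_i)\ge x$ or $|r_j-r_i|\ge 5x$ contributes only about $e^{-c'x}\mu(B_i)$. This fails for \emph{larger} balls: if $r_j\ge r_i+5x$ and $\dist(c_i,c_j)\le(r_j-r_i)/2$, then $B_i\subseteq B_j$. This occurs in viable sequences. Take $z_0,z_1$ at equal height and far apart horizontally, followed by a short pair $z_2,z_3$ near the top $c_1$ of their geodesic. Condition (iv) only asks for $\tdist>\rho_0$, and (v) is vacuous because all $|N_{\cdot,x}|\le 3<x^4$.

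\emph{Equal radii.} Take two normal edges whose endpoints are different antipodal pairs on one sphere of radius $\gg\rho_0$. They have the same Gabriel ball, so the later one is bad, yet $|N_{i,x}|\le 3$.

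In both situations Corollary~\ref{cor:edgeprob} is unavailable for $E_i$ under index-ordered conditioning, and your scheme has nowhere to charge $s_i^{2}$. A fixed surplus $s_j^{-(M-2)}$ with $M=10$ could not absorb up to order $x_0^4$ such neighbours anyway. Also, the packing bound cannot come from (iv), which only separates points at scale $\rho_0\ll r_i$. In the paper it comes from (v), giving $|O_{i,x}|\le 16x^4+1$ (Lemma~\ref{lem:Oixcard}).

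\textbf{The missing idea is to condition in order of radius and to thin.} The paper repeatedly picks the remaining $i\in N$ with largest $r_i$ and discards $O_{i,x_0}$, producing $M\subseteq N$. It then factorises $\Pee(\bigcap_{i\in M}E_i)$ so that each $E_{i_j}$ is conditioned only on selected events of \emph{no larger} radius lying outside $O_{i_j,x_0}$. Then Corollary~\ref{cor:overlapdelta} applies to every factor without exception, and containment in larger balls never matters.

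Each kept edge has the smallest $q_i=Ke^{-cs_i}$ in its discarded group, which has size at most $16x_0^4+1$. The constant $s_0$ is chosen \emph{after} $x_0$ so that $Ke^{-cs}<(8s^4)^{-(16x_0^4+1)}$ (see \eqref{eq:choiceofs0}). This gives $\prod_{i\in M}q_i\le 8^{-|N|}\prod_{i\in N}p_i^2$.

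Jumps are then paid globally rather than by recursive charging. Lemma~\ref{lem:JixUB} gives $|N|\ge x^4|J_x|$ (Corollary~\ref{cor:JxUB}). So the factor $\prod_{x\ge x_0}(1-x^{-2})^{|N|}>2^{-|N|}$ can be redistributed as $(1-x^{-2})^{x^4}\le e^{-x^2}\le e^{-10(d-1)x}$ per jump in $J_x$. Together with the spare $p_{j-1}$ of the normal edge $j-1$, this pays for $p_j$.

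\textbf{Drop the $\tdist$-based $s_i$.} The comparison $\dist\le\tdist+O(1)$ for large $\tdist$ is false: for $z=o$ and $w=(L,0,\dots,0,L^{3/2})$ one has $\tdist\approx\frac12\ln L$ but $\dist\approx\frac32\ln L$. It is also unnecessary. Define $s_i:=\lambda e^{((d-1)/2)\dist(z_{i-1},z_i)}$; then \eqref{eq:tdistdist} gives $P(z_{i-1},z_i)\ge s_i^{-2}$ directly.
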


Before we dive into the proof of Lemma~\ref{lem:QP}, we derive some properties of viable sequences and viable partitions 
that we will rely on.
To ease the burden of notation, for the time being we fix some viable sequence $z_0,\dots,z_n$ and partition 
$(N,S,J) \in \Vcal$.
The following subsets of $N$ will be relevant when we apply Corollary~\ref{cor:edgeprob} later on, and 
we need to control the overlap between the Gabriel balls of the edges.

\begin{equation}\label{eq:Oixdef} 
O_{i,x} := \{ j \in N : \dist(c_j,c_i) < x, r_i - 5x \leq r_j \leq r_i \} 
\end{equation}

\noindent
(Note the difference with the set $N_{i,x}$. We now allow $j$
to be larger than $i$, but we restrict $r_j$ to be less than $r_i$.)

\begin{lemma}\label{lem:Oixcard}
We have
$|O_{i,x}| \leq 16x^{4} + 1$ 
for every $i \in N$ and $x \in \{x_0,x_0+1,\dots\}$. 
\end{lemma}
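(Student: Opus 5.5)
The plan is to argue by contradiction, using condition \textbf{(v)} of viable sequences at a suitably chosen element of $O_{i,x}$ with parameter $2x$ in place of $x$. The key observation is that $O_{i,x}$ has small ``diameter'' both in the positions of the centers and in the radii. Any two $j,j' \in O_{i,x}$ satisfy $\dist(c_j,c_{j'}) < 2x$ by the triangle inequality through $c_i$. They also satisfy $|r_j - r_{j'}| \leq 5x < 5\cdot(2x)$, since both radii lie in $[r_i-5x, r_i]$. Consequently, for any $m \in O_{i,x}$, every $j \in O_{i,x}$ with $j \leq m$ belongs to $N_{m,2x}$ as defined in~\eqref{eq:Nixdef}, because $O_{i,x} \subseteq N$. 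Note also that $2x \in \{x_0,x_0+1,\dots\}$ whenever $x$ is.

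Suppose, for contradiction, that $|O_{i,x}| \geq 16x^4 + 2$. Enumerate $O_{i,x}$ in increasing order of index. Let $m$ be its $(16x^4)$-th smallest element and let $q$ be its largest element. Since at least two elements of $O_{i,x}$ come after $m$, we get $q \geq m+2$. By the observation above, $|N_{m,2x}| \geq 16x^4 = (2x)^4$, and $m \in N$. Condition \textbf{(v)}, applied with $2x$ in place of $x$, then yields
$$ \dist(z_m, z_j) > r_m + 20x \quad \text{for all } j > m+1, $$
and in particular $\dist(z_m,z_q) > r_m + 20x$.

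On the other hand, we bound $\dist(z_m,z_q)$ directly through the centers of the two Gabriel balls. Since $z_m$ is an endpoint of the segment with midpoint $c_m$, we have $\dist(z_m,c_m) = r_m/2$, and likewise $\dist(c_q,z_q) = r_q/2$. Combining this with $\dist(c_m,c_q) < 2x$ and $r_q \leq r_i \leq r_m + 5x$ gives
$$ \dist(z_m,z_q) \leq \tfrac{r_m}{2} + 2x + \tfrac{r_q}{2} \leq r_m + \tfrac{5x}{2} + 2x < r_m + 20x, $$
which is a contradiction. Hence $|O_{i,x}| \leq 16x^4+1$.

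The only delicate point is the choice of the pivot $m$. It must have at least $(2x)^4$ elements of $O_{i,x}$ at or before it, so that \textbf{(v)} applies, and also some element of $O_{i,x}$ at index at least $m+2$, so that \textbf{(v)} says something. Taking $m$ to be the $(16x^4)$-th element achieves both at once, and this is exactly what produces the ``$+1$'' in the bound. Nothing about the Poisson process is used here; the statement is purely a consequence of viability.
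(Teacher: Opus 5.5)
Your proof is correct and is essentially the paper's argument: both take the $(16x^4)$-th element of $O_{i,x}$, observe that it has at least $(2x)^4$ elements of $N_{\cdot,2x}$ at or before it, apply \textbf{(v)} with $2x$, and use the triangle inequality through the Gabriel-ball centers to exclude elements of $O_{i,x}$ with index beyond it plus one. The only difference is presentation: you derive a contradiction from the largest element $q \geq m+2$ by upper-bounding $\dist(z_m,z_q)$, whereas the paper shows that every later index $\ell > j_k+1$ with $r_\ell \le r_i$ has $\dist(c_\ell,c_i) > 14x$, so $\ell \notin O_{i,x}$.
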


\begin{proof}
Let us enumerate the elements of $O_{i,x}$ as $j_1 < j_2 < \dots < j_m$, and write $k := 16x^4$ for notational convenience.
We can assume $m \geq k$, as otherwise there is nothing to prove.
We note that

$$ N_{j_k,2x} \supseteq \{j_1,\dots,j_k\}. $$

\noindent
and consequently, by {\bf (v)}, we have $\dist(z_{j_k}, z_\ell) > r_{j_k} + 20x$ for all $\ell > j_{k}+1$.
If $r_\ell \leq r_{i}$ then  

$$ \begin{array}{rcl} 
\dist(c_{\ell},c_i) 
& \geq & 
\dist(z_{\ell},z_{j_k}) - \dist(z_\ell,c_\ell) - \dist(z_{j_k},c_{j_k}) - \dist(c_{j_k},c_i ) \\
& > &    r_{j_k} + 20x - r_\ell/2 - r_{j_k}/2 - x \\ 
& \geq & r_i - 5x + 20x - r_i - x \\
& = & 14x. 
\end{array} $$

We've just established that for every $\ell > j_k+1$ one of $\dist(c_\ell,c_i)>x$ or $r_\ell > r_i$
holds. In particular $\ell \not\in O_{i,x}$.

We conclude that $O_{i,x} \subseteq \{j_1,\dots,j_k\} \cup \{j_k+1\}$ has cardinality at most $k+1$.
\end{proof}

\begin{corollary}\label{cor:overlapdelta}
For every $\delta > 0$ there exists a $x_1=x_1(\delta)$ such that
whenever we choose $x_0 \geq x_1$ then

$$ \mu\left( B_i \cap 
\left( \bigcup_{j \in N, r_j \leq r_i, \atop j \not\in O_{i,x_0}} B_j \right) \right) 
\leq \delta \cdot \mu( B_i ), $$

\noindent 
for all viable sequences $z_0,\dots,z_n$, all viable partitions $(N,S,J) \in \Vcal(z_0,\dots,z_n)$ and all $i \in N$.
\end{corollary}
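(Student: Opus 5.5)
The plan is a union bound over the balls $B_j$, grouping the indices $j$ by a "scale" $x$. Each group is small by Lemma~\ref{lem:Oixcard}, and each ball in a group overlaps $B_i$ only a little, by Lemma~\ref{lem:elemgeo} or by~\eqref{eq:volballrminuss}.

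\textbf{Grouping by scale.} Fix a viable sequence, a viable partition and some $i \in N$. Write $T := \{ j \in N : r_j \leq r_i,\ j \notin O_{i,x_0}\}$. Every $j \in N$ with $r_j \leq r_i$ lies in $O_{i,x}$ once $x$ is large enough: we only need $x > \dist(c_j,c_i)$ and $x \geq (r_i-r_j)/5$. So for each $j \in T$ we may define
\[
x(j) := \min\{ x \in \{x_0,x_0+1,\dots\} : j \in O_{i,x}\}.
\]
Since $j \notin O_{i,x_0}$, we have $x(j) \geq x_0+1$. By definition $j \in O_{i,x(j)}$, so Lemma~\ref{lem:Oixcard} gives
\[
|\{ j \in T : x(j) = x\}| \leq |O_{i,x}| \leq 16x^4+1 .
\]
Also $j \notin O_{i,x(j)-1}$. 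Since $r_j \leq r_i$, this forces either $\dist(c_j,c_i) \geq x(j)-1$ or $r_j < r_i - 5(x(j)-1)$.

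\textbf{Bounding a single overlap.} Write $x = x(j)$; there are two cases.
\begin{itemize}
\item Suppose $r_j < r_i - 5(x-1)$. Then by~\eqref{eq:volballrminuss}, since both balls have the same volume as balls with a common center,
\[
\mu(B_i \cap B_j) \leq \mu(B(c_j,r_j/2)) \leq e^{-(d-1)\cdot 5(x-1)/2}\,\mu(B_i).
\]
\item Otherwise $\dist(c_j,c_i) \geq x-1$. Using $B_j = B(c_j,r_j/2) \subseteq B(c_j,r_i/2)$, Lemma~\ref{lem:elemgeo} with radius $r_i/2$ gives
\[
\mu(B_i\cap B_j) \leq \mu\left(B(c_i,r_i/2)\cap B(c_j,r_i/2)\right) \leq K e^{-c(x-1)}\mu(B_i).
\]
\end{itemize}
In both cases $\mu(B_i\cap B_j) \leq \left(K e^{-c(x-1)} + e^{-5(d-1)(x-1)/2}\right)\mu(B_i)$.

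\textbf{Summing.} Combining the two steps,
\[
\mu\Bigl(B_i \cap \bigcup_{j\in T} B_j\Bigr) \leq \mu(B_i)\sum_{x \geq x_0+1} (16x^4+1)\left(K e^{-c(x-1)} + e^{-5(d-1)(x-1)/2}\right).
\]
The series converges. Its tail beyond $x_0$ depends only on $d$ and tends to $0$ as $x_0\to\infty$. Choosing $x_1(\delta)$ so that this tail is at most $\delta$ for all $x_0\geq x_1$ finishes the proof. The bound is uniform in the sequence, the partition and $i$, because Lemma~\ref{lem:Oixcard} holds for all $x \geq x_0$.

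The only point requiring care is the case split: the dichotomy has to be extracted from the minimality of $x(j)$ rather than from $x_0$ alone. Without this, one cannot charge each $j$ to a single shell whose size is controlled by Lemma~\ref{lem:Oixcard}.
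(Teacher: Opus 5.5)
Your proof is correct and follows essentially the paper's argument: a union bound over integer shells $x>x_0$, with shell sizes controlled by Lemma~\ref{lem:Oixcard} and individual overlaps bounded by Lemma~\ref{lem:elemgeo}. The only difference is in how you treat balls with $r_j$ much smaller than $r_i$: the paper uses the triangle inequality to put all of them inside the single shrunken ball $B(c_i,r_i/2-x_0)$ and bounds its volume once with~\eqref{eq:volballrminuss}, whereas your minimal-$x$ assignment charges each such ball to its own shell and counts it via Lemma~\ref{lem:Oixcard}, which works equally well.
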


For clarity, let us emphasize that $x_1$ depends {\em only} on the dimension $d$ and on $\delta$, and in particular {\em not} on 
$\lambda, p, s_0$. 

\begin{proof}
For notational convenience, we write 

$$ A := 
B_i \cap 
\left( \bigcup_{j \in N, r_j \leq r_i, \atop j \not\in O_{i,x_0}} B_j \right), \quad 
A_x := B_i \cap \left( 
\bigcup_{j \in O_{i,x}, \atop \dist(c_j,c_i) \geq x-1} B_j \right). $$

\noindent
If $j \in N \setminus O_{i,x_0}$ and $r_j \leq r_i$ then either {\bf a)} $\dist(c_i,c_j) \geq x_0$ or
{\bf b)} $\dist(c_i,c_j) < x_0$ and $r_j \leq r_i - 5x_0$.
We point out that in case {\bf b)} we have 

\begin{equation}\label{eq:Bjjjj} 
B_j = B(c_j,r_j/2) \subseteq B( c_i, r_i/2 - x_0). 
\end{equation}

\noindent
(Any $w \in B_j$ satisfies 
$\dist(w,c_i) \leq \dist(w,c_j) + \dist(c_i,c_j) < r_j/2 + x_0  
\leq (r_i-5x_0)/2 + x_0 < r_i/2 - x_0$.) 

In case {\bf a)} there is some $x>x_0$ such that $x-1 \leq \dist(c_i,c_j) < x$. 
We distinguish the subcases {\bf a-1)} when $j \in O_{i,x}$ and {\bf a-2)} 
when $r_j < r_i-5x$. Analogously to~\eqref{eq:Bjjjj}, in case {\bf a-2)} we have 

$$ B_j \subseteq B(c_i,r_i/2 - x) \subseteq B(c_i,r_i/2 - x_0). $$

\noindent
We've thus established

$$ A \subseteq B(c_i,r_i/2 - x_0) \cup \bigcup_{x>x_0} A_x. $$

\noindent
By~\eqref{eq:volballrminuss} we have

$$\mu\left( B\left( c_i, r_i/2 - x_0 \right) \right)
\leq e^{-(d-1)x_0} \cdot \mu(B_i)  < (\delta/2) \cdot \mu(B_i), $$

\noindent
provided $x_0 \geq x_1$ and we've chosen $x_1 = x_1(\delta)$ sufficiently large.
By Lemma~\ref{lem:elemgeo}, for each $j \in O_{i,x+1}$ we have 

$$ \mu(B_i \cap B_j) \leq \mu\left( B_i \cap B(c_j,r_i/2) \right) \leq K \cdot e^{- c(x-1)} \cdot \mu(B_i), $$ 

\noindent
where $K$ and $c$ are as provided by Lemma~\ref{lem:elemgeo}.
Invoking Lemma~\ref{lem:Oixcard}, we find 

\begin{equation}\label{eq:Ax} \begin{array}{rcl} 
\displaystyle \mu(A_x)
& \leq & 
|O_{i,x}| \cdot K \cdot e^{-c(x-1)}\cdot \mu(B_i) \\
& \leq & 
 (16x^{4}+1) \cdot K \cdot e^{-c(x-1)} \cdot \mu(B_i) \\
& =: & 
a_x \cdot \mu(B_i), 
\end{array} \end{equation}

\noindent
Noticing that the sum $a_1+a_2+\dots$ is convergent, we see that we can choose $x_1=x_1(\delta)$ such that 
$a_{x_1} + a_{x_1+1} + \dots < \delta/2$.
This gives that, whenever $x_0 \geq x_1$:

$$ \begin{array}{rcl} 
\mu(A) 
& \leq & 
(\delta/2) \cdot \mu(B_i) + \sum_{x\geq x_0} \mu(A_x) \\[1.5ex] 
& \leq &  
(\delta/2) \cdot \mu(B_i) + \left(\sum_{x\geq x_0} a_x \right) \cdot \mu(B_i) \\[1.5ex]
& < & \delta \cdot \mu(B_i). 
\end{array} $$

\end{proof}

\noindent 
For $x \in \{x_0,x_0+1,\dots\}$, we let $J_x$ denote 
the set of ``jumps'' $j \in J$ where $|N_{j-1,x}| \geq x^{4}$ and $r_j \leq r_{j-1}+10x$. In a formula:

$$ J_x := \{ j \in J : |N_{j-1,x}| \geq x^4, r_j \leq r_{j-1}+10x\}. $$ 

\noindent
We also define, for $i \in N$:

\begin{equation}\label{eq:Jixdef} 
J_{i,x} := \left\{ j \in J_x : i \in N_{j-1,x}\right\} 
\end{equation}

\noindent
So in particular $J = \bigcup_{x\geq x_0} J_x$ and $J_x = \bigcup_{i\in N} J_{i,x}$.

\begin{lemma}\label{lem:JixUB}
We have $|J_{i,x}| \leq 1$, 
for all $i \in N, x \in \{x_0,x_0+1,\dots\}$.
\end{lemma}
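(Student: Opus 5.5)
The plan is to argue by contradiction: suppose $j < j'$ are two distinct elements of $J_{i,x}$, and use condition {\bf (v)} applied at the index $j-1$ to show that $z_{j'-1}$ is too close to $z_{j-1}$.

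First I collect what membership in $J_{i,x}$ gives. Since $j \in J_x$, condition {\bf (iii)} gives $j-1 \in N$, and by definition $|N_{j-1,x}| \geq x^4$. Similarly $j'-1 \in N$. Since $j \in J$ and $j'-1 \in N$, and $N, J$ are disjoint, we have $j'-1 \neq j$, so $j'-1 \geq j+1$. In other words $j'-1 > (j-1)+1$, which is exactly the index range in which {\bf (v)} applies to $j-1$.

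Next, {\bf (v)} with the index $j-1 \in N$ and this $x$ (allowed because $|N_{j-1,x}|\geq x^4$) yields
$$ \dist(z_{j-1}, z_{j'-1}) > r_{j-1} + 10x. $$
On the other hand, $i \in N_{j-1,x} \cap N_{j'-1,x}$. By~\eqref{eq:Nixdef} this gives $\dist(c_i,c_{j-1}) < x$, $\dist(c_i,c_{j'-1}) < x$, $|r_{j-1}-r_i| < 5x$ and $|r_{j'-1}-r_i|<5x$. Hence $\dist(c_{j-1},c_{j'-1}) < 2x$ and $r_{j'-1} < r_{j-1} + 10x$. Since $c_m$ is the midpoint of the segment from $z_{m-1}$ to $z_m$, we have $\dist(z_m,c_m) = r_m/2$. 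The triangle inequality then gives
$$ \dist(z_{j-1},z_{j'-1}) \leq \frac{r_{j-1}}{2} + 2x + \frac{r_{j'-1}}{2} < \frac{r_{j-1}}{2} + 2x + \frac{r_{j-1}+10x}{2} = r_{j-1} + 7x. $$
This contradicts the bound from {\bf (v)}, so $|J_{i,x}|\leq 1$.

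The only delicate point is the index bookkeeping: we need $j'-1 > j$, not merely $j' > j$, for {\bf (v)} to apply. This follows from the disjointness of $N$ and $J$ together with $j'-1\in N$. Note that the jump condition $r_j \leq r_{j-1}+10x$ is not even needed here; only the $N_{\cdot,x}$ membership and {\bf (v)} are used.
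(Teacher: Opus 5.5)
Your proof is correct and follows the paper's argument essentially verbatim: assume two elements $j<j'$ of $J_{i,x}$, use $j'-1\in N$ (hence $j'-1\neq j$) to get $j'-1>j$ so that {\bf (v)} applies at index $j-1$, and contradict it via the triangle inequality through $c_{j-1}$ and $c_{j'-1}$, which yields $\dist(z_{j-1},z_{j'-1})<r_{j-1}+7x$. Your remark that the jump bound $r_j\le r_{j-1}+10x$ is not needed is also consistent with the paper's proof, which does not use it either.
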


\begin{proof}
Aiming for a contradiction, suppose there exist $j, k \in J_{i,x}$ with $j < k$.
We have $|r_i - r_{j-1}|, |r_i-r_{k-1}| \leq 5x$ and $\dist(c_i,c_{j-1}), \dist(c_i,c_{k-1}) < x$.
This implies

\begin{equation}\label{eq:paranoot} 
\begin{array}{rcl} 
\dist(z_{j-1},z_{k-1}) 
& \leq & 
\dist(z_{j-1},c_{j-1}) + \dist(z_{k-1},c_{k-1}) + \dist(c_i,c_{j-1}) \\
& &  + \dist(c_i,c_{k-1}) \\
& < & r_{j-1}/2 + r_{k-1}/2 + 2x \\
& \leq & r_{j-1} + 7x.    
\end{array} 
\end{equation}

\noindent
(We use $|r_{j-1}-r_{k-1}| \leq |r_{j-1}-r_i|+|r_{k-1}-r_i| \leq 10x$ in the second inequality.)

Next, we remark that necessarily $k > j+1$, because $k-1 \in N$ by {\bf (iii)}. So {\bf(v)} implies

$$ \dist(z_{j-1},z_{k-1}) > r_{j-1} + 10x, $$

\noindent 
contradicting~\eqref{eq:paranoot}. 
Our initial assumption that $J_{i,x}$ has at least two distinct
elements must be wrong.
\end{proof}

\begin{corollary}\label{cor:JxUB}
We have $|N| \geq x^4 \cdot |J_x|$ for all $x \in \{x_0,x_0+1,\dots\}$.
\end{corollary}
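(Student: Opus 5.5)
This is a double counting argument built on Lemma~\ref{lem:JixUB}. Fix $x \in \{x_0,x_0+1,\dots\}$ and count the pairs
$$ \Pi_x := \{ (i,j) : j \in J_x,\ i \in N_{j-1,x} \}. $$
By the definition of $J_x$, every $j \in J_x$ satisfies $|N_{j-1,x}| \geq x^4$. Counting $\Pi_x$ by its second coordinate therefore gives $|\Pi_x| \geq x^4 \cdot |J_x|$.

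Now count by the first coordinate. For a pair $(i,j) \in \Pi_x$ we have $i \in N_{j-1,x} \subseteq N$, and $(i,j)\in\Pi_x$ is equivalent to $j \in J_{i,x}$, by the definition~\eqref{eq:Jixdef}. Hence
$$ |\Pi_x| = \sum_{i \in N} |J_{i,x}| \leq |N|, $$
using Lemma~\ref{lem:JixUB}, which gives $|J_{i,x}|\leq 1$ for each $i\in N$.

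Combining the two counts yields $x^4 |J_x| \leq |\Pi_x| \leq |N|$, which is the claim. No step here is genuinely difficult. The only points to check are two. First, $N_{j-1,x}$ is well defined and contained in $N$ for $j\in J_x$; this holds because $j-1\in N$ by {\bf(iii)}, and $N_{j-1,x}\subseteq N$ by~\eqref{eq:Nixdef}. Second, the assignment $(i,j)\mapsto i$ loses nothing in the count, which is exactly what Lemma~\ref{lem:JixUB} guarantees.
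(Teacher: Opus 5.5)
Your proposal is correct and follows the paper's proof: the paper counts the edges of the auxiliary bipartite graph on $N \cup J_x$ with $ij$ an edge iff $j \in J_x$ and $i \in N_{j-1,x}$, which is exactly your set $\Pi_x$. Both arguments bound degrees of $i \in N$ by $1$ via Lemma~\ref{lem:JixUB} and degrees of $j \in J_x$ below by $x^4$ via the definition of $J_x$.
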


\begin{proof}
We fix an arbitrary $x \in \{x_0,x_0+1,\dots\}$ and define the auxiliary bipartite graph $G$ by:

$$V(G) = N \cup J_x, \quad  E(G) = \{ ij : j \in J_x, i \in N_{j-1,x} \}. $$

\noindent
By Lemma~\ref{lem:JixUB}, we have $\text{deg}(i) \leq 1$ for all $i \in N$, while 
$\text{deg}(j) \geq x^4$ for all $j \in J_x$ by definition of $J_x$.
This gives 

$$ |N| \geq \sum_{i\in N} \text{deg}(i) = |E(G)| = \sum_{j\in J_x} \text{deg}(j) \geq x^4 \cdot |J_x|. $$

\end{proof}

\begin{proofof}{Lemma~\ref{lem:QP}}
We let $K, c,\delta$ be as provided by Corollary~\ref{cor:edgeprob} and $x_1=x_1(\delta)$ as provided 
by Corollary~\ref{cor:overlapdelta}.
We now set $x_0 := \max(x_1,10\cdot (d-1))$. 
Having picked $x_0$, we choose $s_0>1$ such that 

\begin{equation}\label{eq:choiceofs0} 
K \cdot e^{-cs} < \left(8 \cdot s^4\right)^{-(16x_0^4+1)}, 
\end{equation}

\noindent
for all $s\geq s_0$. (This is clearly possible since the LHS decays exponentially, while 
the RHS is of the form $\text{const} \cdot s^{-\text{const}}$.)

Let $z_0,\dots,z_n$ be an arbitrary viable sequence. 
The event whose probability $Q$ we need to bound considers all viable partitions $(N,S,J) \in \Vcal(z_0,\dots,z_n)$ 
simultaneously, but for the moment we just fix some arbitrary partition $(N,S,J)$. 
For notational convenience, we write for $i \in N \cup J$:

\begin{equation}\label{eq:sidef} 
s_i := \lambda e^{\left(\frac{d-1}{2}\right) r_i}, \quad q_i := K e^{-c s_i}, \quad p_i := s_i^{-2}. 
\end{equation}

\noindent 
We remark that for $i \in N \cup J$, since $r_i \geq \tdist(z_{i-1},z_i) > \rho_0$, we have 

$$ s_i > \lambda e^{\left(\frac{d-1}{2}\right) \rho_0} = s_0, \quad \text{ and } \quad P(z_{i-1},z_i) \geq p_i. $$

We greedily select a subset $M = \{i_1,\dots,i_m\} \subseteq N$ as follows.
We take an element $i_1 \in N$ that maximizes $r_i$, remove 
$O_{i_1,x_0}$, take an element $i_2$ of the remainder $N \setminus O_{i_1,x_0}$ that maximizes $r_i$, remove
$O_{i_2,x_0}$, and so on. 
(We repeat until no more elements are left.)
Recalling Lemma~\ref{lem:Oixcard}, we see that for every element  of $N$ we've kept, we've
discarded at most $16x_0^4$ elements, all with $q_i$ at least as large. This gives:

$$  
\prod_{i \in M} q_i \leq 
\left( \prod_{i \in N} q_i \right)^{1/(16x_0^4+1)} 
\leq  8^{-|N|} \cdot \prod_{i \in N} s_i^{-4}
= 
8^{-|N|} \cdot \left(\prod_{i\in N} p_i\right)^2 . $$

\noindent
using~\eqref{eq:choiceofs0} for the second inequality.
Since 

$$ \prod_{x\geq x_0} (1-x^{-2}) 
\geq 1 - \sum_{x\geq x_0} x^{-2}
\geq 1 - \sum_{x\geq x_0-1} \frac{1}{x(x+1)} = \frac{x_0-2}{x_0-1} > \frac12, $$

\noindent 
(using $x_0 \geq 10(d-1) > 3$ in the final inequality) it follows that 

$$  
\prod_{i \in M} q_i \leq 4^{-|N|} \cdot \left( \prod_{x\geq x_0} (1-x^{-2}) \right)^{|N|}
\cdot \left( \prod_{i\in N} p_i \right)^2. $$

\noindent 
Recalling that $j-1 \in N$ for each $j \in J$, we find:

\begin{equation}\label{eq:boneless} 
\begin{array}{rcl} 
\displaystyle
\prod_{i \in M} q_i 
& \leq & \displaystyle
2^{-(|N|+|J|)} \cdot \prod_{x\geq x_0} (1-x^{-2})^{|N|} \cdot 
\prod_{i\in N} p_i \cdot \prod_{j\in J} p_{j-1} \\[3ex]
& = & \displaystyle
2^{-(n-|S|)} \cdot \prod_{x\geq x_0} (1-x^{-2})^{|N|} \cdot 
\prod_{i\in N} p_i \cdot \prod_{j\in J} p_{j-1}. 
\end{array} 
\end{equation}

\noindent
Let us write $I_{x_0} := J_{x_0}$ and $I_x := J_x \setminus (J_{x_0} \cup \dots \cup J_{x-1})$ for $x>x_0$. 
Corollary~\ref{cor:JxUB} gives:

\begin{equation}\label{eq:imbaboon} \begin{array}{rcl} 
\displaystyle
\prod_{x\geq x_0} (1-x^{-2})^{|N|} \cdot \prod_{j\in J} p_{j-1}
& \leq & \displaystyle
\prod_{x\geq x_0} (1-x^{-2})^{x^4|I_x|} \cdot \prod_{j \in J} p_{j-1} \\[2ex]
& = & \displaystyle
\prod_{x\geq x_0}\prod_{j \in I_x} (1-x^{-2})^{x^4} p_{j-1} \\[2ex]
& \leq & \displaystyle
\prod_{x\geq x_0}\prod_{j \in I_x} e^{-x^2} p_{j-1} 
\leq 
\prod_{x\geq x_0}\prod_{j \in I_x} e^{-10(d-1)x} p_{j-1} \\[2ex]
& \leq & \displaystyle
\prod_{x\geq x_0}\prod_{j \in I_x} p_{j} 
= 
\prod_{j \in J} p_j,
\end{array} \end{equation}
 
\noindent
where we use that $x \geq 10(d-1)$ in the fourth step, that 
$r_j \leq r_{j-1} + 10x$ implies $p_j \geq e^{-10(d-1)x} p_{j-1}$ in the penultimate step (see~\eqref{eq:sidef}) 
and that $I_{x_0}, I_{x_0+1}, \dots$ is a partition of $J$ in the last step.
Combining~\eqref{eq:boneless} and~\eqref{eq:imbaboon}, we see that 

\begin{equation}\label{eq:karpouzi} 
\prod_{i \in M} q_i \leq 2^{-(n-|S|)}\cdot\prod_{i \in N \cup J} p_i 
\leq 2^{-(n-|S|)}\cdot\prod_{i=1}^n P(z_{i-1},z_i).
\end{equation}

\noindent 
(Since $P(z_{i-1},z_i)$ equals 1 if $i \in S$ and is $\geq p_i$ otherwise.)

Next we point out that 

\begin{equation}\label{eq:meloen} 
\Pee\left(\bigcap_{i\in N} E_i\right) \leq \Pee\left(\bigcap_{i\in M} E_i\right)
= \Pee( E_{i_m} ) \cdot \Pee( E_{i_{m-1}} | E_{i_m} ) \cdot \dots \cdot 
\Pee( E_{i_1} | E_{i_2} \cap \dots \cap E_{i_m} ). 
\end{equation}

\noindent
By construction of $M$ and Corollary~\ref{cor:overlapdelta} we have 
$\mu\left( B_{i_j} \cap \left( B_{i_{j+1}} \cup \dots \cup B_{i_m}\right) \right) < \delta \mu( B_{i_j} )$ for all $1 \leq j \leq m$.
Corollary~\ref{cor:edgeprob} now gives:

$$ \Pee( E_{i_j} | E_{i_{j+1}}\cap\dots\cap E_{i_m} ) \leq q_{i_j}. $$

\noindent 
Filling this into~\eqref{eq:meloen} and recalling~\eqref{eq:karpouzi} gives

$$ \Pee\left(\bigcap_{i\in N} E_i\right) \leq \prod_{i \in M} q_i \leq 2^{-(n-|S|)}\cdot\prod_{i=1}^n P(z_{i-1},z_i). $$

\noindent 
Since $(N,S,J) \in \Vcal(z_0,\dots,z_n)$ was arbitrary, this bound holds for every viable partition. 

We now remark that the set $S$ is in fact uniquely determined by the viable sequence $z_0,\dots,z_n$ because
of {\bf(ii)}. (That is, any two viable partitions for $z_0,\dots,z_n$ agree on the set $S$.) This in particular implies

$$ |\Vcal(z_0,\dots,z_n)| \leq 2^{(n-|S|)}. $$

\noindent
We are now in a position to finally derive:

$$ \begin{array}{rcl} 
Q(z_0,\dots,z_n) 
& \leq & \displaystyle 
\sum_{(N,S,J) \in \Vcal} \Pee\left(\bigcap_{i\in N} E_i\right) \\[2ex]
& \leq & \displaystyle 
\sum_{(N,S,J) \in \Vcal} 2^{-(n-|S|)}\cdot\prod_{i=1}^n P(z_{i-1},z_i) \\[3ex]
& \leq & 
\prod_{i=1}^n P(z_{i-1},z_i). 
\end{array} $$

\end{proofof}

Proposition~\ref{prop:Rh} is an easy consequence of Lemma~\ref{lem:QP} and the Mecke formula.

\begin{proofof}{Proposition~\ref{prop:Rh}}
Let us write $\xi(z_1,\dots,z_n)$ for the indicator function that $o,z_1,\dots,z_n$ is a
viable sequence, and $\eta(z_1,\dots,z_n)$ for the indicator that demand {\bf(iv)} is satisfied (but not necessarily the
other demands for viable sequences), and write 
$R := \left( \eR^{d-1} \times (0,h) \right)^{n-1} \times \left( \eR^{d-1} \times [h,\infty) \right) $.
By the Mecke formula, writing $z_0 := o$ for convenience:

$$ \begin{array}{rcl} 
R_n(h) 
& = & \displaystyle 
\left(\lambda p\right)^n \cdot 
\int\dots\int_R \xi(z_1,\dots,z_n) \cdot Q(z_0,\dots,z_n) \mu({\dd}z_n)\dots\mu({\dd}z_1) \\[2ex]
& \leq & \displaystyle 
\left(\lambda p\right)^n \cdot 
\int\dots\int_R \eta(z_1,\dots,z_n) \cdot \prod_{i=1}^n P(z_{i-1},z_i) \mu({\dd}z_n)\dots\mu({\dd}z_1) \\[2ex]
& = & S_n(h).
\end{array} $$

\end{proofof}

\subsection{Wrapping up the proof of Theorem~\ref{thm:main}.\label{sec:wrapitup}}

Combining the results from the previous two sections, we can now give a quick derivation of the 
following observation.

\begin{corollary}\label{cor:p0exists}
There exists $p_0 > 0$ such that, for all $p \leq p_0$ and $0 < \lambda \leq 1$:

$$ \Pee_{p,\lambda}^o\left( o \pijl \Zcal_b \cap \left(\eR^{d-1} \times [h,\infty)\right) \right) \xrightarrow[h\to\infty]{} 0. $$

\end{corollary}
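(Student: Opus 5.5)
We reduce the event to the existence of a reduced path and then sum the first-moment bounds from Proposition~\ref{prop:Rh} and Proposition~\ref{prop:Snh}. Fix $s_0$ and $x_0$ as given by Proposition~\ref{prop:Rh}, and let $c_1=c_1(s_0)$ and $c_2=c_2(s_0)$ be the constants from Proposition~\ref{prop:Snh}. Set $p_0:=\min(1/2,\,1/(2c_1))$.

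The first step is to extract a suitable reduced path. Suppose $o\pijl \Zcal_b\cap(\eR^{d-1}\times[h,\infty))$ holds in the Palm version. Then there is a path $o=z_0,z_1,\dots,z_m$ in the black subgraph of the Delaunay graph ending in a black point of height $\geq h$. Cut it at the first index $i$ with $(z_i)_d\geq h$. This gives a path from $o$ to a point of height $\geq h$ whose internal nodes all have height $<h$.

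By Proposition~\ref{prop:pathcontainsredpath} this path contains a reduced subsequence with the same endpoints. Its internal nodes form a subset of the original internal nodes, so they also lie in $\eR^{d-1}\times(0,h)$. Note that $o$ itself has height $1<h$, so it is not an internal node. Hence the reduced path is counted by $R_n(h)$ for some $n\ge 1$.

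The second step is the union bound combined with Markov's inequality:
$$\Pee^o_{p,\lambda}\bigl(o\pijl \Zcal_b\cap(\eR^{d-1}\times[h,\infty))\bigr)\le \sum_{n\ge1}R_n(h)\le \sum_{n\ge1}S_n(h)\le \sum_{n\ge1}(c_1p)^n\,\Pee(\Po(c_2\ln h)<n).$$
For $p\le p_0$ we have $c_1p\le 1/2$. Each term is therefore at most $2^{-n}\Pee(\Po(c_2\ln h)<n)$, which tends to $0$ as $h\to\infty$ for each fixed $n$ and is dominated by the summable sequence $2^{-n}$. Dominated convergence then gives that the sum tends to $0$. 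The bound is uniform in $0<\lambda\le 1$, since none of the constants depend on $\lambda$.

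The only point needing care is that the reduced subsequence genuinely satisfies the conditions defining $R_n(h)$. Concretely, its first point is $o$, its endpoint is black with height $\geq h$, and its internal nodes are below height $h$; all of this holds because we truncated at the first hitting time of height $h$ before reducing. One should also check that Proposition~\ref{prop:pathcontainsredpath} applies with $o$ treated as an element of $\Zcal_b\cup\{o\}$ in the Palm version, which is exactly how reduced paths were defined.
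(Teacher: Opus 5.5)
Your proposal is correct and follows the paper's proof essentially verbatim. You use the same choice $p_0=\min(1/2,1/(2c_1))$ and the same chain $\sum_n R_n(h)\le\sum_n S_n(h)\le\sum_n 2^{-n}\Pee(\Po(c_2\ln h)<n)$ via Proposition~\ref{prop:pathcontainsredpath}. Your dominated-convergence step replaces the paper's explicit split at $n_0$, and your truncation at the first vertex of height $\geq h$ correctly spells out a step the paper leaves implicit.
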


\begin{proof}
We let $s_0, x_0$ be as provided by Proposition~\ref{prop:Rh} and we let $c_1 = c_1(s_0),c_2 = c_2(s_0)$ 
be as provided by Proposition~\ref{prop:Snh}.
For all $0 < p \leq p_0 := \min(1/(2\cdot c_1), 1/2)$, all $n \in \eN$ and all $h>1$, we have 

$$ R_n(h) \leq S_n(h) \leq 2^{-n} \cdot \Pee\left( \Po\left( c_2 \cdot \ln h \right) < n \right). $$

\noindent
Appealing to Proposition~\ref{prop:pathcontainsredpath}, it follows that 

$$ \begin{array}{rcl} 
\Pee_{p,\lambda}^o\left( o \pijl \Zcal_b \cap \left(\eR^{d-1} \times [h,\infty)\right) \right)
& \leq &  
\sum_{n=1}^\infty R_n(h) \\[2ex]
& \leq & 
\sum_{n=1}^{n_0} \Pee\left( \Po\left( c_2 \cdot \ln h \right) < n \right) 
+ \sum_{n > n_0} 2^{-n} \\[2ex]
& \leq & 
n_0 \cdot \Pee\left( \Po\left( c_2 \cdot \ln h \right) < n_0 \right) + 2^{-n_0}, 
\end{array} $$

\noindent
for any $n_0 \in \eN$.

To conclude the proof, let $\eps>0$ be arbitrary.
First choosing $n_0$ such that $2^{-n_0} < \eps/2$, we can choose $h_0$ such that 
$n_0 \cdot \Pee\left( \Po\left( c_2 \cdot \ln h \right) < n_0 \right) < \eps/2$ for all $h>h_0$.

This gives that $\Pee_{p,\lambda}^o\left( o \pijl \Zcal_b \cap \left(\eR^{d-1} \times [h,\infty)\right) \right) < \eps$ 
for all 
sufficiently large $h$, which is what we needed to show.
\end{proof}

\begin{corollary}\label{cor:p0existssss}
There exists $p_0 > 0$ such that, for all $p \leq p_0$ and $0 < \lambda \leq 1$:

$$ \Pee_{p,\lambda}\left( o \pijl (0,\dots,0,h) \right) \xrightarrow[h\to\infty]{} 0. $$

\end{corollary}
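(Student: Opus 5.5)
We derive this from Corollary~\ref{cor:p0exists}, with the same $p_0$. The one difference is that $o$ and $q_h:=(0,\dots,0,h)$ are arbitrary points of $\Haa^d$, not nuclei, and we work under $\Pee_{p,\lambda}$ rather than the Palm measure.

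\textbf{Step 1: reduce to nuclei.} Almost surely $o$ lies in the interior of a unique cell $C(z)$, and $q_h$ in the interior of a unique cell $C(w)$. If $o \pijl q_h$, then $z,w\in\Zcal_b$ and there is a black path in the Delaunay graph from $z$ to $w$. The height of $w$ is comparable to $h$ unless $\dist(w,q_h)$ is large. Let $\Ycal:=\Zcal\cap B(o,1)$. We split according to whether $\Ycal$ is empty and whether $\dist(q_h,w)\le t$, where $t$ is a fixed radius. The probability that $B(q_h,t)$ contains no point of $\Zcal$ is $e^{-\lambda\mu(B(o,t))}$, independently of $h$, and it can be made small by choosing $t$ large (for fixed $\lambda$). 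On the complementary event we have $w_d\ge e^{-t}h$.

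\textbf{Step 2: handle the start.} Similarly, with probability at least $1-\eps$ there is a nucleus within distance $t$ of $o$, so $\dist(o,z)\le t$. By the Mecke formula (Theorem~\ref{thm:Mecke}),
$$\Pee_{p,\lambda}\bigl(o\pijl q_h,\ \dist(o,z)\le t\bigr) \le \lambda p\int_{B(o,t)} \Pee^{u}_{p,\lambda}\bigl(u\pijl \Zcal_b\cap(\eR^{d-1}\times[e^{-t}h,\infty))\bigr)\,\mu(\dd u) + \eps .$$
Take an isometry mapping $u$ to $o$, composed of a horizontal translation and a dilation by the factor $1/u_d\in[e^{-t},e^{t}]$. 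It maps the halfspace $\{y_d\ge e^{-t}h\}$ into $\{y_d\ge e^{-2t}h\}$. By Corollary~\ref{cor:p0exists}, the integrand therefore tends to $0$ pointwise as $h\to\infty$. It is bounded by $1$, and the domain has finite measure, so dominated convergence gives that the integral tends to $0$.

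\textbf{Step 3: conclude.} Sending $h\to\infty$ and then $\eps\to0$ gives the claim for each fixed $0<\lambda\le1$.

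\textbf{Main obstacle.} The delicate point is the reduction in Step 1, namely that connection to the point $q_h$ forces connection to a black nucleus at height $\gtrsim h$. This needs the cell of $q_h$ to have a nucleus at bounded hyperbolic distance, which holds with probability close to $1$ uniformly in $h$ by homogeneity. The rest is Palm/Mecke bookkeeping plus invariance of $\Zcal$ under isometries preserving the height ordering.
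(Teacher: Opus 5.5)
Your proof is correct, and its skeleton matches the paper's. Both reduce to Corollary~\ref{cor:p0exists}: an empty-ball estimate controls the cell containing $(0,\dots,0,h)$, and a comparison with a Palm measure handles the start at $o$.

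The two steps are carried out differently, though.

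For the start, the paper does not integrate over the nucleus of $o$'s cell. It observes that adding $o$ as an extra black nucleus can only enlarge the black region: every point that was in a black cell is afterwards either still in that cell or in the new black cell of $o$. So the event is increasing, and $\Pee_{p,\lambda}(\cdot)\le\Pee^o_{p,\lambda}(\cdot)$ follows in one line. Your Mecke-plus-isometry step reaches the same conclusion with more bookkeeping. Its merit is that it does not use any monotonicity of the event. Also, your integrand is bounded uniformly in $u\in B(o,t)$ by $\Pee^o_{p,\lambda}\bigl(o\pijl\Zcal_b\cap(\eR^{d-1}\times[e^{-2t}h,\infty))\bigr)$, so dominated convergence is not actually needed.

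For the end, the paper uses the moving threshold $\sqrt h$ instead of your fixed radius $t$. If the nucleus of the cell containing $(0,\dots,0,h)$ has height below $\sqrt h$, then the ball of radius $\frac12\ln h$ around that point contains no nuclei. The probability of that event tends to $0$ as $h\to\infty$, so no $\eps$ and no second limit are required.

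Minor point: the set $\Ycal=\Zcal\cap B(o,1)$ introduced in your Step 1 is never used and can be dropped.
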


\begin{proof}
We fix $0<\lambda\leq 1$ and $p \leq p_0$ with $p_0$ as provided by the previous corollary. 
For notational convenience, we write 

$$\begin{array}{c} 
E_h := \{ o \pijl (0,\dots,0,h) \}, \quad  F_h := \left\{ o \pijl \Zcal_b \cap \left(\eR^{d-1}\times[\sqrt{h},\infty)\right) \right\}, \\[2ex]
G_h := \left\{\begin{array}{l}
\text{the nucleus of the cell that contains}\\
\text{$(0,\dots,0,h)$ lies in $\eR^{d-1}\times(0,\sqrt{h})$}
\end{array}\right\}.    
\end{array} $$

\noindent
Clearly 

$$ \Pee_{p,\lambda}(E_h) \leq \Pee_{p,\lambda}(F_h) + \Pee_{p,\lambda}(G_h). $$

\noindent
By the previous corollary 

$$ \Pee_{p,\lambda}(F_h) \leq \Pee_{p,\lambda}^o(F_h) \xrightarrow[h\to\infty]{} 0. $$

\noindent
(We use that adding the origin as a black nucleus can only make the event $F_h$ more likely.)
Next we observe that if the event $G_h$ holds then 
the ball around $(0,\dots,0,h)$ of radius $\int_{\sqrt{h}}^h y^{-1}{\dd}y = \frac12\ln h$ has to be devoid of
Poisson points. 
So

$$ \begin{array}{rcl} 
\Pee_{p,\lambda}(G_h) 
& \leq & \exp\left[ - \lambda\cdot\mu\left( B\left( (0,\dots,0,h),\frac12\ln h \right) \right)\right] \\[2ex]
& = & \exp\left[ - (1+o_h(1)) \cdot \lambda \cdot \frac{\omega_{d-1}}{2^{d-1}(d-1)} \cdot h^{(d-1)/2} \right] \xrightarrow[h\to\infty]{} 0. 
\end{array} $$

\noindent
where we've used~\eqref{eq:asympvolball} in the second line.
The result follows.
\end{proof}

To conclude the proof of Theorem~\ref{thm:main} we rely on the following well-known fact.

\begin{proposition}\label{prop:pu} If $p>p_u(\lambda)$ then 
 $\displaystyle \inf_{x,y \in \Haa^d} \Pee_{p,\lambda}( x \pijl y ) > 0$.
\end{proposition}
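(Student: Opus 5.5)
We prove Proposition~\ref{prop:pu} with the standard Harris--FKG argument. Fix $p>p_u(\lambda)$. By definition of $p_u$ and monotonicity in $p$ (couple all $p$ via i.i.d.\ uniform labels on the nuclei), there is some $p'\in(p_u(\lambda),p]$ with $\Pee_{p',\lambda}(\text{exactly one unbounded cluster})>0$. The number of unbounded clusters is an isometry-invariant event, and the coloured Poisson process is ergodic under the isometry group of $\Haa^d$ (indeed mixing under translations along a geodesic). Hence this probability is $1$ at $p'$. To pass from $p'$ to $p$ we use the standard uniqueness-monotonicity argument (H\"aggstr\"om--Peres/Schonmann type, as adapted to the continuum by Benjamini--Schramm \cite{BS2001}). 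Since the proposition is ``well-known'', one can also simply interpret it with $p$ chosen so that uniqueness holds a.s., which is all that is needed for Theorem~\ref{thm:main}. Either way, from here on we assume that a.s.\ there is exactly one unbounded black cluster $\mathcal{C}_\infty$ at parameter $p$.

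Next we show that $\theta:=\Pee_{p,\lambda}(x\pijl\infty)$ is positive and independent of $x$. Independence of $x$ holds by homogeneity: isometries preserve $\mu$, hence the law of the coloured process. Positivity holds because an unbounded cluster exists a.s., so by countable additivity some bounded ball meets it with positive probability. A finite-energy modification then forces a fixed point $x$ into it with positive probability: condition on the configuration outside a ball, and recolour or add a black nucleus close to $x$ so that its cell joins the cluster. Alternatively, argue via ergodicity that $\mathcal{C}_\infty$ has positive density.

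The key step is Harris--FKG. The events $\{x\pijl\infty\}$ and $\{y\pijl\infty\}$ are increasing in the black points and decreasing in the white points. Equivalently, they are increasing in the marked Poisson process if we view the colour as a mark, where turning white to black is an increase. Harris--FKG for Poisson processes with independent marks (e.g.\ via approximation by finite Bernoulli families on a fine grid) gives
$$\Pee_{p,\lambda}(x\pijl\infty,\ y\pijl\infty)\geq\theta^2 .$$
On this event, uniqueness of the unbounded cluster forces $x$ and $y$ into the same cluster, so $x\pijl y$. Hence $\Pee_{p,\lambda}(x\pijl y)\ge\theta^2>0$ uniformly in $x,y$.

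The main obstacle is technical: justifying FKG for the continuum Voronoi model, where a cell's shape depends on all nuclei. The monotonicity concern is that adding a white nucleus can shrink black cells, and adding a black nucleus can shrink other black cells but only replaces area by black area, so connectivity through black regions only grows. I would check carefully that ``add black point'' and ``recolour white to black'' preserve black connectivity, and then approximate by a discretization to which Harris' inequality applies. The other delicate point is the passage from uniqueness at some $p'$ to all $p>p_u$, which I would cite from \cite{BS2001}.
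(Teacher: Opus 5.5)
Your skeleton (ergodicity upgrades $\Pee_{p',\lambda}(U)>0$ to $\Pee_{p',\lambda}(U)=1$, then $\theta>0$ by homogeneity, then Harris--FKG, then uniqueness forcing $x\pijl y$) matches the paper's. The gap is the step where you transfer almost-sure uniqueness from $p'$ to $p$. The monotone coupling does not give this, because the event $U$ of exactly one unbounded cluster is not increasing. Uniqueness monotonicity is a genuine theorem (H\"aggstr\"om--Peres and Schonmann for graphs), and \cite{BS2001} does not provide it here. That paper works in $\Haa^2$, where uniqueness above $p_u$ follows from planar duality with the white process, and this argument has no analogue in $\Haa^d$ for $d\geq 3$. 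Your fallback of simply assuming uniqueness at $p$ proves a weaker statement, not the proposition, even though, as you note, the weaker version would suffice for Theorem~\ref{thm:main}.

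The missing observation is that you never need uniqueness at $p$. Run the entire FKG argument at $p'$, where $U$ holds almost surely. This gives $\Pee_{p',\lambda}(x\pijl y)\geq\theta_{p'}^2$ with $\theta_{p'}:=\Pee_{p',\lambda}(o\pijl\infty)>0$. Then use the coupling you already set up: the event $\{x\pijl y\}$ is increasing, so $\Pee_{p,\lambda}(x\pijl y)\geq\Pee_{p',\lambda}(x\pijl y)\geq\theta_{p'}^2$. This is exactly the paper's proof.

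There is also a smaller simplification. For $\theta_{p'}>0$ you do not need a finite-energy modification. Since $\theta_{p'}$ does not depend on the point and Voronoi cells almost surely have nonempty interior, the unbounded cluster almost surely contains a point of the countable set $\Haa^d\cap\Qu^d$. So $\theta_{p'}=0$ would give $\Pee_{p',\lambda}(U)\leq\sum_{x\in\Haa^d\cap\Qu^d}\theta_{p'}=0$. For Harris--FKG, the paper avoids discretization by citing \cite{LastPenrose2017} (Theorem 20.4), applied first to the black process and then to the white process; either approach works.
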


\noindent
This observation has a short and relatively elementary proof. 
We include it for the convenience of the reader, in Appendix~\ref{sec:pu}.
We should however also mention that Greb\'{\i}k and Recke (\cite{GrebikRecke}, Theorem 6.1) in fact prove the stronger result that 
$p_u(\lambda) = \inf\{ p : \inf_{x,y} \Pee_{p,\lambda}( x \pijl y ) > 0 \}$ 
for Poisson-Voronoi percolation defined over an ambient space from a certain family of 
geometric spaces which includes hyperbolic $d$-space.
(But the argument of Greb\'{\i}k and Recke is much more involved than the proof of the statement we need.)
Earlier, a result for Cayley graphs analogous to that of Greb\'{\i}k and Recke was proved by Lyons and Schramm~\cite{LyonsSchramm}.

Another fact that we will use is the following.

\begin{proposition}\label{prop:pcpos}
For every $\lambda_0 > 0$, we have
$\displaystyle \inf_{\lambda > \lambda_0} p_c(\lambda) > 0$.
\end{proposition}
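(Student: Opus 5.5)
We will show that for every $\lambda_0>0$ there is $p_1=p_1(\lambda_0,d)>0$ such that, for all $\lambda\geq\lambda_0$ and $p\leq p_1$, almost surely there is no unbounded black cluster. This gives $p_c(\lambda)\geq p_1$ for all $\lambda>\lambda_0$. The tool is a first moment bound on black paths in the Delaunay graph, via the Mecke formula (Theorem~\ref{thm:Mecke}). If the cluster of $o$ (in the Palm version, or of the cell containing $o$) is unbounded, there are black Delaunay paths $o=z_0,z_1,\dots,z_n$ of every length $n$ with distinct vertices. So it suffices to show that the expected number of such self-avoiding paths of length $n$ tends to $0$ as $n\to\infty$.

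Applying Theorem~\ref{thm:Mecke}, the expected number of such paths is
\[
(\lambda p)^n\int\dots\int \Pee^{z_0,\dots,z_n}_{p,\lambda}\big(z_{i-1}z_i \text{ Delaunay edge for all } i\big)\,\mu({\dd}z_1)\dots\mu({\dd}z_n).
\]
I would bound the integrand crudely by $\min_i$ of the probability of a single edge, or more usefully factorize along every other edge. The key single-edge estimate is
\[
\lambda\int_{\Haa^d}\Pee^{z,w}(zw\text{ is a Delaunay edge})\,\mu({\dd}w)\le D(\lambda_0),
\]
i.e.\ the expected Delaunay degree is bounded uniformly in $\lambda\geq\lambda_0$. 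By homogeneity of $\Haa^d$ this is independent of $z$. For $\lambda\geq\lambda_0$ the relevant length scale is at most that for $\lambda_0$, so this is plausible. To get it, I would use~\eqref{eq:DelEdge}: an edge $zw$ forces an empty ball with $z,w$ on its boundary. Covering by a bounded number of cones (Lemma~\ref{lem:kap}) shows that one of $O(1)$ fixed-aperture sectors at $z$ of radius comparable to $\dist(z,w)$ must be empty. This probability is at most $\exp(-c\lambda\mu(B(z,c'\dist(z,w))))$, and integrating against $\mu$ using~\eqref{eq:volball} gives a finite bound. Alternatively, the argument of Lemma~\ref{lem:edgeprob} yields doubly-exponential decay for large distances, though that lemma is stated only for $\lambda\le1$.

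The main obstacle is the dependence between edges along a path, which is why the paper needs its elaborate reduced-path machinery for small $\lambda$. For bounded-below $\lambda$ I would avoid this with a cruder standard device. Discretize $\Haa^d$ into a bounded-degree family of tiles of bounded diameter, forming a quasi-lattice graph of bounded degree $\Delta$. A black Delaunay path of length $n$ with all edges of length at most $L$ visits a chain of tiles. With high probability, long edges are rare, since each edge of length $>L$ requires an empty region of volume $\gtrsim e^{(d-1)L/2}$ near it. The colours of distinct nuclei are independent, and a lattice path of $m$ tiles forces at least $m/C$ distinct black nuclei in well-separated tiles, which are independent given the point process. So the expected number of such "black" lattice animals/paths is at most $\Delta^{m}\cdot(C' p)^{m/C}$, where $C'$ bounds the expected number of nuclei per tile (this bound uses $\lambda$ bounded and large-$\lambda$ rescaling). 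This sum is summable for $p$ small. I expect the handling of large $\lambda$ to be the delicate point. As $\lambda\to\infty$ the cells shrink and the count of nuclei per tile explodes, so I would instead choose tiles of diameter $\asymp\lambda^{-1/d}$. Locally, hyperbolic space looks Euclidean at that scale, so the Euclidean argument that $p_c(\eR^d)>0$ goes through with uniform constants. I would verify that the constants (degree of the tile graph, the probability of empty tiles) stay uniform in $\lambda\geq\lambda_0$, since the curvature only helps at scales below $1$.
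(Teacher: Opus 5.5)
Your overall strategy, coarse-graining $\Haa^d$ into a bounded-degree tile graph at a $\lambda$-dependent scale and running a Peierls count, is the same as the paper's. However, the step meant to make it work fails. You restrict to black Delaunay paths whose edges have length at most $L$, and dismiss longer edges as ``rare with high probability''. That is not enough for an almost-sure statement. For any fixed $L$ there are almost surely infinitely many Delaunay edges longer than $L$ (they occur with positive density), and an infinite black cluster may use them. You also cannot let $L\to\infty$, because the degree of your tile graph, and hence the admissible $p$, depends on $L$.

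Viewed in space, the same problem looks like this: a black nucleus surrounded by a large empty region has a huge cell. The black region can then cross many tiles that contain no black nucleus, so ``a chain of $m$ tiles forces $m/C$ black nuclei'' is false. Your first, Mecke-based route has the analogous defect. Delaunay adjacency events for disjoint pairs stay dependent through empty balls of unbounded size, so ``factorising along every other edge'' is not justified.

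The missing idea is to charge large cells to empty regions, working with the black region as a subset of space rather than with Delaunay paths. The paper proceeds as follows.
\begin{itemize}
\item Choose $R$ with $\lambda V(R)=A$, take a maximal $R$-separated net $\Ncal$, and join net points at distance $\le 8R$.
\item Call $v$ bad if $\Zcal_b\cap B(v,4R)\neq\emptyset$ or $\Zcal_w\cap B(v,R)=\emptyset$.
\item Deterministically, every point $x$ of a black cell lies within $R$ of a bad $v$. If $v$ were good, some white point would be within $2R$ of $x$, while every black point is at distance $>3R$.
\item Hence an unbounded black component gives an infinite bad cluster, whatever the edge lengths.
\end{itemize}

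The probability estimate is then $\Pee(v\text{ bad})\le pA\,V(4R)/V(R)+e^{-A/2}$. The second term does not vanish as $p\to 0$, so it must beat the degree of the net graph; this is the real content of the argument. One needs $\Delta_A e^{-A/4}<1$, where $\Delta_A=\sup_{R\le R_A}V(9R)/V(R/2)$ and $\lambda_0 V(R_A)=A$. This holds for large $A$ because $\log\Delta_A=O(R_A)$ while $A\asymp e^{(d-1)R_A}$.

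With tiles of diameter $\asymp\lambda^{-1/d}$ and an unspecified constant, the emptiness probability is just some fixed constant, and the Peierls sum need not converge. Enlarging tiles in $\Haa^d$ raises the degree, so this balance must be checked, not borrowed from $\eR^d$; curvature does not ``only help''. Finally, no separate large-$\lambda$ regime is needed. Since $R\le R_A$ for every $\lambda\ge\lambda_0$, and the ratios $V(cR)/V(R)$ are uniformly bounded on $(0,R_A]$, one choice of scale covers all $\lambda\ge\lambda_0$.
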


This also seems to be known, but as far as 
we know it has not been spelled out explicitly in the literature. 
It can be easily derived by combining and slightly adapting existing results in the literature, as follows.
Benjamini and Schramm~\cite{BS2001} have shown that $\lambda \mapsto p_c(\lambda)$ is continuous and strictly positive. 
Strictly speaking they considered only dimension $d=2$ but their arguments carry over to general dimension with very minor adjustments.
A striking recent result of B\"{u}hler et al.~\cite{BDRS25} gives that in any fixed dimension $d \geq 2$, as $\lambda \to \infty$, the critical probability 
$p_c(\lambda)$ tends to the 
critical probability for Euclidean Poisson-Voronoi percolation on $\eR^d$ -- which is non-zero. (This confirms
a conjecture of Hansen and the second author~\cite{HansenMuller2024}.)
In particular, there exist $\eps>0, \lambda_1 > 0$ such that $p_c(\lambda) > \eps$ for all $\lambda > \lambda_1$.
By compactness (and the mentioned results of Benjamini and Schramm) we must have 
$\inf_{\lambda_0\leq \lambda \leq \lambda_1} p_c(\lambda) > 0$, completing the (sketch) proof of Proposition~\ref{prop:pcpos}.

The proof of B\"{u}hler et al.~\cite{BDRS25} is rather involved and it is possible to give a more direct proof of 
Proposition~\ref{prop:pcpos} that is relatively short and elementary. So the argument we just sketched could be seen as a case of 
``shooting a sparrow with a cannon''.
For the benefit of the reader we provide a direct and elementary proof of Proposition~\ref{prop:pcpos}, in Appendix~\ref{sec:pcpos}.

\begin{proofof}{Theorem~\ref{thm:main}}
For $0 < \lambda \leq 1$, Proposition~\ref{prop:pu} and Corollary~\ref{cor:p0existssss} imply that 

$$ \inf_{0<\lambda\leq 1} p_u(\lambda) \geq p_0 > 0. $$

\noindent
As $p_u(\lambda) \geq p_c(\lambda)$ for all $\lambda$, Proposition~\ref{prop:pcpos} now gives

$$ \inf_{\lambda > 0} p_u(\lambda) \geq \min\left( p_0, \inf_{\lambda>1} p_c(\lambda) \right) > 0. $$

\end{proofof}

\section{Discussion and suggestions for further work.\label{sec:discuss}}

In this paper, we have shown that Poisson-Voronoi percolation on $\Haa^d$ with $d\geq 3$ satisfies
$\inf_{\lambda>0} p_u(\lambda)>0$, answering a question of Greb\'{\i}k and Recke.
An obvious follow-up question is:

\begin{question}\label{qu:obvious} 
What is $\displaystyle \lim_{\lambda\searrow 0} p_u(\lambda)$?
\end{question}

In fact, it is not even clear whether this limit exists. 
So proving the existence of this limit -- without necessarily determining it -- could be another open question for other teams to try.
We remind the reader that, by an unpublished argument of D'Achille and Curien, the limit -- if it exists -- is at most $1/2$ in any dimension $d\geq 3$.

It should be possible to squeeze explicit bounds out of the proofs in this paper, but it seems 
unlikely to us that this will lead to anything that is close to ``sharp''.

Unlike in the case of the asymptotics of $p_c(\lambda)$ as $\lambda \searrow 0$, where 
Hansen and the second author~\cite{HansenMuller2024} conjectured 
it should be asymptotic to the reciprocal of the expected typical degree by analogy with the 
two-dimensional case, we do not have a concrete guess of the limit in Question~\ref{qu:obvious}.
A natural starting point might be a more careful analysis of ``upward paths''. That is, one might start by 
deriving something in the 
spirit of Lemma~\ref{lem:Ikh} but with $P$ more closely approximating the true edge probabilities in the 
Poisson-Delaunay graph.

Other natural questions on the behaviour of $p_u(\lambda)$ that come to mind are:

\begin{question}\label{qu:diffble} 
Is $\lambda \mapsto p_u(\lambda)$ differentiable?
\end{question}

\begin{question}\label{qu:monotone} 
Is $\lambda \mapsto p_u(\lambda)$ (strictly) decreasing?
\end{question}

\begin{question}\label{qu:puHdpcRd} 
Is $p_u(\lambda)$ (strictly) larger than the critical probability for Poisson-Voronoi percolation on $\eR^d$ for all $\lambda$?
\end{question}

\noindent
By the results of Benjamini and Schramm~\cite{BS2001}, this last question has a positive answer in dimension $d=2$.
We point out that, by the results of B\"uhler et al.~\cite{BDRS25}, a positive answer to Question~\ref{qu:monotone}
implies a positive answer to Question~\ref{qu:puHdpcRd}, as well as a positive answer to 
the question of the existence of the limit $\lim_{\lambda\searrow 0} p_u(\lambda)$.

Let us also point out that $\lambda\mapsto p_u(\lambda)$ is continuous (in any dimension $d$).
This follows by combining the characterization of the uniqueness threshold due to Greb\'{\i}k and
Recke~\cite[Theorem~6.1]{GrebikRecke} with the continuity argument of Benjamini and
Schramm~\cite[Lemma~6.6]{BS2001}.

Another natural direction for further work is to try and adapt our proof to other geometric spaces. 
In particular it would be of interest to determine a reasonably general list of conditions for the 
ambient geometric space that guarantees 
$\inf_\lambda p_u(\lambda)>0$.
In view of the results in~\cite{dAchilleVanishing,GrebikRecke}, perhaps Theorem~\ref{thm:main} will generalize to 
all (non-compact) rank one symmetric spaces.

Question 10.6 in~\cite{GrebikRecke} asks for non-amenable Cayley graphs satisfying $0 < \inf_{\lambda}
p_u(\lambda) \leq \sup_\lambda p_u(\lambda) < 1$.
We suspect the Cayley graph of a suitably chosen finitely generated subgroup of 
the isometries of $\Haa^d$ could do the trick, and some of the methods from the current paper might be of use.
We have however chosen not to pursue this in the current project.

In most of our proofs we have assumed $0 < \lambda \leq 1$, for technical reasons.
The value one is a convenient upper bound : for larger $\lambda$ the value of $\ln(1/\lambda)$ 
occurring in the definition~\eqref{eq:rho0def} of $\rho_0$ is negative. 
Of course this minor technical nuisance could easily be overcome.
Our proofs do however need some upper bound on $\lambda$, because when $\lambda$ is very large then 
most of the ``action'' occurs when the radii of the Gabriel balls are small, and the asymptotics 
for the volume of hyperbolic balls~\eqref{eq:asympvolball} no longer applies. Rather, on the relevant
scale the geometry behaves closer to the situation in $\eR^d$.

As mentioned earlier, we feel that the most novel technical/mathematical contribution of our paper is the trick 
with the reduced paths developed in Section~\ref{sec:redpath}. 
We are not aware of any similar techniques in the literature other than perhaps the lower bound proof in 
the paper~\cite{HansenMuller2024} by Hansen and the second author (which is not that similar from our -- admittedly biased -- perspective).
Perhaps a version of this technique could be useful in other percolation models with dependencies.

\subsection*{Acknowledgements}

We thank Matteo d'Achille, Gilles Bonnet, Nicolas Curien, J\'ulia Komj\'athy and R\'eka Szabo for helpful discussions.
We thank Matteo d'Achille for introducing us to the problem and we thank J\'ulia Komj\'athy for pointing us to the 
reference~\cite{GracarEtal}.
We thank the organizers of the inspiring MFO mini-workshop ``hyperbolic meets stochastic geometry'', where we 
learned of several exciting new developments and the problem tackled in the current paper was brought to 
our attention.

\bibliographystyle{plain}
\bibliography{ReferencesIrlbeckMuller}

@article { Wendel62,
    AUTHOR = {Wendel, J.G.},
     TITLE = {A problem in geometric probability},
   JOURNAL = {Math. Scand.},
  FJOURNAL = {Mathematica Scandinavica},
    VOLUME = {11},
      YEAR = {1962},
     PAGES = {109--111},
      ISSN = {0025-5521},
   MRCLASS = {60.15 (53.90)},
  MRNUMBER = {146858},
MRREVIEWER = {H. S. M. Coxeter},
       DOI = {10.7146/math.scand.a-10655},
       URL = {https://doi.org/10.7146/math.scand.a-10655},
}

@book { Kingman,
    AUTHOR = {Kingman, J. F. C.},
     TITLE = {Poisson processes},
    SERIES = {Oxford Studies in Probability},
    VOLUME = {3},
      NOTE = {Oxford Science Publications},
 PUBLISHER = {The Clarendon Press, Oxford University Press, New York},
      YEAR = {1993},
     PAGES = {viii+104},
      ISBN = {0-19-853693-3},
   MRCLASS = {60G05 (60G55 60K99)},
  MRNUMBER = {1207584},
MRREVIEWER = {Dietrich Stoyan},
}

@book { SchneiderWeil,
    AUTHOR = {Schneider, R. and Weil, W.},
     TITLE = {Stochastic and integral geometry},
    SERIES = {Probability and its Applications (New York)},
 PUBLISHER = {Springer-Verlag, Berlin},
      YEAR = {2008},
     PAGES = {xii+693},
      ISBN = {978-3-540-78858-4},
   MRCLASS = {60-02 (52A22 60D05 60G55 62M30)},
  MRNUMBER = {2455326},
MRREVIEWER = {V. K. Ohanyan},
       DOI = {10.1007/978-3-540-78859-1},
       URL = {https://doi.org/10.1007/978-3-540-78859-1},
}

@article { BriedenEtal,
    AUTHOR = {Brieden, A. and Gritzmann, P. and Kannan, R.
              and Klee, V. and Lov\'{a}sz, L. and Simonovits, M.},
     TITLE = {Deterministic and randomized polynomial-time approximation of
              radii},
   JOURNAL = {Mathematika},
  FJOURNAL = {Mathematika. A Journal of Pure and Applied Mathematics},
    VOLUME = {48},
      YEAR = {2001},
    NUMBER = {1-2},
     PAGES = {63--105},
      ISSN = {0025-5793},
   MRCLASS = {52B55 (52A20 52A38 68U05 68W20 68W25)},
  MRNUMBER = {1996363},
MRREVIEWER = {Therese C. Biedl},
       DOI = {10.1112/S0025579300014364},
       URL = {https://doi.org/10.1112/S0025579300014364},
}

@book { Schilling,
    AUTHOR = {Schilling, R.L.},
     TITLE = {Measures, integrals and martingales},
 PUBLISHER = {Cambridge University Press, New York},
      YEAR = {2005},
     PAGES = {xii+381},
      ISBN = {978-0-521-61525-9; 0-521-61525-9},
   MRCLASS = {28-01 (28A12 28A20 28A25 60G42 60G46)},
  MRNUMBER = {2200059},
MRREVIEWER = {Peter Eichelsbacher},
       DOI = {10.1017/CBO9780511810886},
       URL = {https://doi.org/10.1017/CBO9780511810886},
}

@book {Penroseboek,
    AUTHOR = {Penrose, M.D.},
     TITLE = {Random geometric graphs},
    SERIES = {Oxford Studies in Probability},
    VOLUME = {5},
 PUBLISHER = {Oxford University Press},
   ADDRESS = {Oxford},
      YEAR = {2003},
     PAGES = {xiv+330},
      ISBN = {0-19-850626-0},
   MRCLASS = {60-02 (05C80 60D05)},
  MRNUMBER = {1986198 (2005j:60003)},
MRREVIEWER = {Ilya S. Molchanov},
       DOI = {10.1093/acprof:oso/9780198506263.001.0001},
       URL = {http://dx.doi.org/10.1093/acprof:oso/9780198506263.001.0001},
}

@book { StoyanKendallMecke87,
    AUTHOR = {Stoyan, D. and Kendall, W.S. and Mecke, J.},
     TITLE = {Stochastic geometry and its applications},
    SERIES = {Wiley Series in Probability and Mathematical Statistics:
              Applied Probability and Statistics},
      NOTE = {With a foreword by D. G. Kendall},
 PUBLISHER = {John Wiley \& Sons, Ltd., Chichester},
      YEAR = {1987},
     PAGES = {345},
}

@article{ BS2001,
	title={Percolation in the hyperbolic plane},
	author={Benjamini, I. and Schramm, O.},
	journal={Journal of the American Mathematical Society},
	volume={14},
	number={2},
	pages={487--507},
	year={2001}
}

@book{ Ratcliffe,
    AUTHOR = {Ratcliffe, J.G.},
     TITLE = {Foundations of hyperbolic manifolds},
    SERIES = {Graduate Texts in Mathematics},
    VOLUME = {149},
   EDITION = {Third},
 PUBLISHER = {Springer, Cham},
      YEAR = {2019},
     PAGES = {xii+800},
      ISBN = {978-3-030-31597-9; 978-3-030-31596-2},
   MRCLASS = {57M50 (20H10 30F40 57K32)},
  MRNUMBER = {4221225},
       DOI = {10.1007/978-3-030-31597-9},
       URL = {https://doi.org/10.1007/978-3-030-31597-9},
}

@article { GracarEtal,
    AUTHOR = {Gracar, P. and L\"uchtrath, L. and M\"orters, P.},
     TITLE = {Percolation phase transition in weight-dependent random
              connection models},
   JOURNAL = {Adv. in Appl. Probab.},
  FJOURNAL = {Advances in Applied Probability},
    VOLUME = {53},
      YEAR = {2021},
    NUMBER = {4},
     PAGES = {1090--1114},
      ISSN = {0001-8678,1475-6064},
   MRCLASS = {60K35 (05C80)},
  MRNUMBER = {4342578},
       DOI = {10.1017/apr.2021.13},
       URL = {https://doi.org/10.1017/apr.2021.13},
}

@unpublished{ BDRS25, 
Author = {B\"{u}hler, T. and Dembin, B. and Radhakrishnan, R.R. and Severo, F.},
title = {High-intensity {V}oronoi percolation on manifolds},
note = {Preprint, 2025. Available from arXiv:2503.21737},
}

@book{MeesterRoy1996,
  author    = {Meester, R. and Roy, R.},
  title     = {Continuum Percolation},
  series    = {Cambridge Tracts in Mathematics},
  volume    = {119},
  publisher = {Cambridge University Press},
  address   = {Cambridge},
  year      = {1996}
}

@book{LastPenrose2017,
  author    = {Last, G. and Penrose, M.D.},
  title     = {Lectures on the Poisson Process},
  series    = {Institute of Mathematical Statistics Textbooks},
  volume    = {7},
  publisher = {Cambridge University Press},
  year      = {2017},
  doi       = {10.1017/9781316104477}
}

@unpublished{ GrebikRecke,
author = {Greb\'{\i}k, J. and Recke, K.},
title = {{P}oisson-{V}oronoi percolation in higher rank},
note = {Preprint, 2025. Available from arXiv:2504.02435},
}

@article { HansenMuller2024,
    AUTHOR = {Hansen, B.T. and M\"uller, T.},
     TITLE = {Poisson-{V}oronoi percolation in the hyperbolic plane with
              small intensities},
   JOURNAL = {Ann. Probab.},
  FJOURNAL = {The Annals of Probability},
    VOLUME = {52},
      YEAR = {2024},
    NUMBER = {6},
     PAGES = {2342--2405},
      ISSN = {0091-1798,2168-894X},
   MRCLASS = {60K35},
  MRNUMBER = {4815974},
MRREVIEWER = {Anatoly\ Yambartsev},
       DOI = {10.1214/24-aop1698},
       URL = {https://doi.org/10.1214/24-aop1698},
}

@article { HansenMuller2022,
    AUTHOR = {Hansen, B.T. and M\"uller, T.},
     TITLE = {The critical probability for {V}oronoi percolation in the
              hyperbolic plane tends to {$1/2$}},
   JOURNAL = {Random Structures Algorithms},
  FJOURNAL = {Random Structures \& Algorithms},
    VOLUME = {60},
      YEAR = {2022},
    NUMBER = {1},
     PAGES = {54--67},
      ISSN = {1042-9832,1098-2418},
   MRCLASS = {82B43 (60K35)},
  MRNUMBER = {4340473},
MRREVIEWER = {Adam\ Matthew\ Bowditch},
       DOI = {10.1002/rsa.21018},
       URL = {https://doi.org/10.1002/rsa.21018},
}

@article { LyonsSchramm,
    AUTHOR = {Lyons, R. and Schramm, O.},
     TITLE = {Indistinguishability of percolation clusters},
   JOURNAL = {Ann. Probab.},
  FJOURNAL = {The Annals of Probability},
    VOLUME = {27},
      YEAR = {1999},
    NUMBER = {4},
     PAGES = {1809--1836},
      ISSN = {0091-1798,2168-894X},
   MRCLASS = {60K35 (60B99 82B43)},
  MRNUMBER = {1742889},
MRREVIEWER = {Olle\ H\"aggstr\"om},
       DOI = {10.1214/aop/1022677549},
       URL = {https://doi.org/10.1214/aop/1022677549},
}

@unpublished{ dAchilleVanishing,
author = {D'Achille, M. and Greb\'{\i}k, J. and Khezeli, A. and Recke, K. and Wilkens, A.},
title = {Vanishing uniqueness thresholds in {V}oronoi percolation on products},
note = {Preprint, 2025, available from arXiv:2511.23317},
}

@article { dAchilleIdeal,
    AUTHOR = {D'Achille, M. and Curien, N. and Enriquez,
              N. and Lyons, R. and \"{U}nel, M.},
     TITLE = {Ideal {P}oisson-{V}oronoi tessellations on hyperbolic spaces},
   JOURNAL = {Ann. Probab.},
  FJOURNAL = {The Annals of Probability},
    VOLUME = {54},
      YEAR = {2026},
    NUMBER = {2},
     PAGES = {846--891},
}

@unpublished{ LiYu,
author = { Li, X. and Liu, Y.},
title = {Sharpness of phase transition for {V}oronoi percolation in hyperbolic space},
note = {Preprint, 2021. Available from arXiv:2111.07276},
}

@article { Vanneuville19,
    AUTHOR = {Vanneuville, H.},
     TITLE = {Annealed scaling relations for {V}oronoi percolation},
   JOURNAL = {Electron. J. Probab.},
  FJOURNAL = {Electronic Journal of Probability},
    VOLUME = {24},
      YEAR = {2019},
     PAGES = {Paper No. 39, 71},
}

@unpublished{ dAchilleThaele,
author = {D'Achille, M. and Th{\"{a}}le, C.}, 
title = {Face volume densities of positive-intensity and ideal {P}oisson--{V}oronoi tessellations in hyperbolic spaces},
note = {Preprint, 2026. Available from arXiv:2606.26049},
}

@unpublished{ MellickEtal,
title = {{P}oisson--{V}oronoi tessellations and fixed price in higher rank},
author = {Fr{\fontencoding{T1}\selectfont\k{a}}czyk, M. and Mellick, S. and Wilkens, A.},
note = {Annals of Mathematics, to appear},
}

@article { BudzinskiEtalCheeger,
    AUTHOR = {Budzinski, T. and Curien, N. and Petri, B.},
     TITLE = {On {C}heeger constants of hyperbolic surfaces},
   JOURNAL = {Invent. Math.},
  FJOURNAL = {Inventiones Mathematicae},
    VOLUME = {242},
      YEAR = {2025},
    NUMBER = {2},
     PAGES = {511--530},
      ISSN = {0020-9910,1432-1297},
   MRCLASS = {53A05 (05C50 30F99)},
  MRNUMBER = {4965870},
}

@article { MellickIndistinguish,
    AUTHOR = {Mellick, S.},
     TITLE = {Indistinguishability of cells for the ideal {P}oisson
              {V}oronoi tessellation},
   JOURNAL = {Proc. Amer. Math. Soc.},
  FJOURNAL = {Proceedings of the American Mathematical Society},
    VOLUME = {154},
      YEAR = {2026},
    NUMBER = {4},
     PAGES = {1589--1596},
      ISSN = {0002-9939,1088-6826},
   MRCLASS = {60G55 (37A20)},
  MRNUMBER = {5042206},
       DOI = {10.1090/proc/17493},
       URL = {https://doi.org/10.1090/proc/17493},
}

@article { CalkaChapronEnriquez,
    AUTHOR = {Calka, P. and Chapron, A. and Enriquez, N.},
     TITLE = {Poisson-{V}oronoi tessellation on a {R}iemannian manifold},
   JOURNAL = {Int. Math. Res. Not. IMRN},
  FJOURNAL = {International Mathematics Research Notices. IMRN},
    VOLUME = {2021},
      YEAR = {2021},
    NUMBER = {7},
     PAGES = {5413--5459},
}

@article { Freedman97,
    AUTHOR = {Freedman, M.H.},
     TITLE = {Percolation on the projective plane},
   JOURNAL = {Math. Res. Lett.},
  FJOURNAL = {Mathematical Research Letters},
    VOLUME = {4},
      YEAR = {1997},
    NUMBER = {6},
     PAGES = {889--894},
}

@article { BalisterBollobasQuas05,
    AUTHOR = {Balister, P. and Bollob\'{a}s, B. and Quas, A.},
     TITLE = {Percolation in {V}oronoi tilings},
   JOURNAL = {Random Structures Algorithms},
  FJOURNAL = {Random Structures \& Algorithms},
    VOLUME = {26},
      YEAR = {2005},
    NUMBER = {3},
     PAGES = {310--318},
}

@article { BalisterBollobas10,
    AUTHOR = {Balister, P. and Bollob\'{a}s, B.},
     TITLE = {Bond percolation with attenuation in high dimensional
              {V}orono\u{\i} tilings},
   JOURNAL = {Random Structures Algorithms},
  FJOURNAL = {Random Structures \& Algorithms},
    VOLUME = {36},
      YEAR = {2010},
    NUMBER = {1},
     PAGES = {5--10},
}

@article { Isokawa00,
AUTHOR = {Isokawa, Y.},
TITLE = {Some mean characteristics of {P}oisson-{V}oronoi and {P}oisson-{D}elaunay tessellations in hyperbolic planes},
JOURNAL = {Bulletin of the Faculty of Education, Kagoshima University. Natural science},
VOLUME = {52},
YEAR = {2000},
PAGES = {11-25},
}

@article { Isokawa3d,
    AUTHOR = {Isokawa, Y.},
     TITLE = {Poisson-{V}oronoi tessellations in three-dimensional
              hyperbolic spaces},
   JOURNAL = {Adv. in Appl. Probab.},
  FJOURNAL = {Advances in Applied Probability},
    VOLUME = {32},
      YEAR = {2000},
    NUMBER = {3},
     PAGES = {648--662},
}

@article { BSconformal,
    AUTHOR = {Benjamini, I. and Schramm, O.},
     TITLE = {Conformal invariance of {V}oronoi percolation},
   JOURNAL = {Comm. Math. Phys.},
  FJOURNAL = {Communications in Mathematical Physics},
    VOLUME = {197},
      YEAR = {1998},
    NUMBER = {1},
     PAGES = {75--107},
}

@article { Elliot,
    AUTHOR = {Benjamini, I. and Paquette, E. and Pfeffer, J.},
     TITLE = {Anchored expansion, speed and the {P}oisson-{V}oronoi
              tessellation in symmetric spaces},
   JOURNAL = {Ann. Probab.},
  FJOURNAL = {The Annals of Probability},
    VOLUME = {46},
      YEAR = {2018},
    NUMBER = {4},
     PAGES = {1917--1956},
}

@article{ Elliot2,
    AUTHOR = {Benjamini, I. and Krauz, Y. and Paquette, E.},
     TITLE = {Anchored expansion of {D}elaunay complexes in real hyperbolic
              space and stationary point processes},
   JOURNAL = {Probab. Theory Related Fields},
  FJOURNAL = {Probability Theory and Related Fields},
    VOLUME = {181},
      YEAR = {2021},
    NUMBER = {1-3},
     PAGES = {197--209},
      ISSN = {0178-8051,1432-2064},
   MRCLASS = {60D05 (52C20 60G55)},
  MRNUMBER = {4341072},
MRREVIEWER = {Stanislav\ Volkov},
       DOI = {10.1007/s00440-021-01076-y},
       URL = {https://doi.org/10.1007/s00440-021-01076-y},
}

@book{ BollobasRiordanboek,
	title={Percolation},
	author={Bollob{\'a}s, B. and Riordan, O.},
	year={2006},
	publisher={Cambridge University Press}
}

@book { Grimmettboek,
    AUTHOR = {Grimmett, G.},
     TITLE = {Percolation},
 PUBLISHER = {Springer-Verlag, Berlin},
      YEAR = {1999},
}

@article{bollobas2006critical,
  title={The critical probability for random {V}oronoi percolation in the plane is 1/2},
  author={Bollob{\'a}s, B. and Riordan, O.},
  journal= {Probability Theory and Related Fields},
  volume={136},
  number={3},
  pages={417--468},
  year={2006},
  publisher={Springer}
}

@article {AhlbergEtAl16,
    AUTHOR = {Ahlberg, D. and Griffiths, S. and Morris, R. and
              Tassion, V.},
     TITLE = {Quenched {V}oronoi percolation},
   JOURNAL = {Adv. Math.},
  FJOURNAL = {Advances in Mathematics},
    VOLUME = {286},
      YEAR = {2016},
     PAGES = {889--911},
}

@article {AhlbergBaldasso18,
    AUTHOR = {Ahlberg, D. and Baldasso, R.},
     TITLE = {Noise sensitivity and {V}oronoi percolation},
   JOURNAL = {Electron. J. Probab.},
  FJOURNAL = {Electronic Journal of Probability},
    VOLUME = {23},
      YEAR = {2018},
     PAGES = {Paper No. 108, 21},
   MRCLASS = {60K35 (60G55 82B43)},
  MRNUMBER = {3878133},
       DOI = {10.1214/18-ejp233},
       URL = {https://doi.org/10.1214/18-ejp233},
}

@article {Duminil19,
    AUTHOR = {Duminil-Copin, H. and Raoufi, A. and Tassion, V.},
     TITLE = {Exponential decay of connection probabilities for subcritical
              {V}oronoi percolation in {$\Bbb{R}^d$}},
   JOURNAL = {Probab. Theory Related Fields},
  FJOURNAL = {Probability Theory and Related Fields},
    VOLUME = {173},
      YEAR = {2019},
    NUMBER = {1-2},
     PAGES = {479--490},
}

@article {Tassion16,
    AUTHOR = {Tassion, V.},
     TITLE = {Crossing probabilities for {V}oronoi percolation},
   JOURNAL = {Ann. Probab.},
  FJOURNAL = {The Annals of Probability},
    VOLUME = {44},
      YEAR = {2016},
    NUMBER = {5},
     PAGES = {3385--3398},
}

@unpublished { Zvavitch,
author = { Zvavitch, A.}, 
title = {The critical probability for {V}oronoi percolation}, 
note = {MSc. thesis, Weizmann Institute of Science, 1996, available from 
\url{http://www.math.kent.edu/~zvavitch/master_version_dvi.zip}},
}

@incollection{CannonFloydKenyonParry1997,
  author    = {Cannon, J.W. and Floyd, W.J. and Kenyon, R. and Parry, W.R.},
  title     = {Hyperbolic Geometry},
  booktitle = {Flavors of Geometry},
  editor    = {Levy, Silvio},
  series    = {Mathematical Sciences Research Institute Publications},
  volume    = {31},
  pages     = {59--115},
  publisher = {Cambridge University Press},
  address   = {Cambridge},
  year      = {1997}
}

\appendix

\section{The proofs we postponed.}

\subsection{Derivations of~\eqref{eq:volballUB},~\eqref{eq:asympvolball} and~\eqref{eq:volballrminuss}.\label{sec:ballrminuss}}

We start with~\eqref{eq:volballUB}. 
Since $\sinh s = \frac12 (e^s - e^{-s}) \leq \frac12 e^s$ for all $s$, we have 

$$ 
\mu( B(x,r) ) 
= \omega_{d-1}\cdot\int_0^r \left(\sinh s\right)^{d-1}{\dd}s 
\leq \frac{\omega_{d-1}}{2^{d-1}} \cdot \int_{-\infty}^r e^{(d-1)s}{\dd}s 
= \frac{\omega_{d-1}}{2^{d-1}(d-1)} \cdot e^{(d-1)r}.   
$$

\noindent 
To see that~\eqref{eq:asympvolball} holds, we note that 
we can write $\sinh s = \frac12 e^s \cdot (1-e^{-2s})$, so that 

$$ \begin{array}{rcl} 
\mu( B(x,r) ) 
& \geq & \displaystyle 
(1-e^{-r})^{d-1} \cdot \frac{\omega_{d-1}}{2^{d-1}} \cdot \int_{r/2}^r e^{(d-1)s}{\dd}s\\ 
& = & \displaystyle 
(1-e^{-r})^{d-1} \cdot \frac{\omega_{d-1}}{2^{d-1}(d-1)} \cdot (e^{(d-1)r}-e^{(d-1)r/2}) \\ 
& = &  \displaystyle 
(1-e^{-r})^{d-1} \cdot (1-e^{-r(d-1)/2}) \cdot\frac{\omega_{d-1}}{2^{d-1}(d-1)} \cdot  e^{(d-1)r} \\
& = & \displaystyle 
(1-o_r(1)) \cdot \frac{\omega_{d-1}}{2^{d-1}(d-1)} \cdot e^{(d-1)r}. 
\end{array} $$

\noindent 
Together with~\eqref{eq:volballUB} this gives~\eqref{eq:asympvolball}.

Next we consider~\eqref{eq:volballrminuss}. We first note that if $x>y>0$ then 

$$ \sinh(x-y) = \frac12 \left( e^{x-y} - e^{y-x} \right) \leq e^{-y} \cdot \frac12 \left( e^{x} - e^{-x} \right)
= e^{-y} \cdot \sinh x. $$

\noindent 
Applying the substitution $u := t+s$ we find

$$ \begin{array}{rcl} 
\mu( B(x,r-s) ) 
& = & 
\omega_{d-1} \cdot \int_0^{r-s} \sinh^{d-1} t {\dd}t
=  \omega_{d-1} \cdot \int_s^{r} \sinh^{d-1}(u-s){\dd}u \\
& \leq & 
e^{-(d-1)s} \cdot \omega_{d-1} \cdot \int_s^{r} \sinh^{d-1}(u){\dd}u 
\leq e^{-(d-1)s} \cdot \omega_{d-1} \cdot \int_0^{r} \sinh^{d-1}(u){\dd}u \\
& = & e^{-(d-1)s} \cdot \mu( B(x,r ) ). 
\end{array} $$

\subsection{Proof of Lemma~\ref{lem:elemgeoEucl}.\label{sec:elemgeoEucl}}

\begin{proofof}{Lemma~\ref{lem:elemgeoEucl}}
Let us write $B = B_{\eR^d}(c,r)$. If $c=\orig$ then we must have $B = B_{\eR^d}(\orig, \norm{a} )$ and any 
linear halfspace $L$ will do. So we assume $c \neq \orig$ from now.
We will show that the linear halfspace 

$$L = \{ x \in \eR^d : c^t x > 0 \},$$ 

\noindent 
does the trick.
To see this we first note that $a,-a \in \partial B$ implies

$$ \begin{array}{rcl} r^2 & = & \norm{c-a}^2 = \norm{c}^2 + \norm{a}^2 - 2c^ta \\
& = & \norm{c+a}^2 = \norm{c}^2 + \norm{a}^2 + 2c^ta, \end{array} $$

\noindent
so that $c^t a = 0$ and $r^2 = \norm{c}^2 + \norm{a}^2$.
Now let $x \in B_{\eR^d}(\orig,\norm{a}) \cap L$ be arbitrary. 
We can write $x = \lambda c + \mu v$ with $c,v$ orthogonal and $\lambda,\mu \in \eR$.
By Pythagoras

$$ \norm{x}^2 = \lambda^2 \norm{c}^2 + \mu^2 \norm{v}^2. $$

\noindent
Since $x \in L$ we have $c^t x = \lambda c^tc + \mu c^tv  = \lambda c^tc> 0$, so that $\lambda > 0$. 
Now 

$$ \begin{array}{rcl} 
\norm{x-c}^2 
& = & 
\norm{(\lambda-1)c + \mu v}^2 
= (\lambda-1)^2 \norm{c}^2 + \mu^2 \norm{v}^2 \\
& = & 
(1-2\lambda)\norm{c}^2 + \lambda^2 \norm{c}^2 + \mu^2 \norm{v}^2 
=  (1-2\lambda)\norm{c}^2 + \norm{x}^2 \\
& < & 
\norm{c}^2 + \norm{a}^2 = r^2. 
\end{array} $$

\noindent
using that $\lambda > 0$ and $\norm{x} < \norm{a}$ in the last step.
This shows $x \in B$, which is what we needed to show.
\end{proofof}

\subsection{Proof of Lemma~\ref{lem:U}.\label{sec:U}}

\begin{proofof}{Lemma~\ref{lem:U}}
We'll write $a_k := \frac{1}{k+1} {2k \choose k}$ for the $k$-th Catalan number.
Since $a_k \leq 4^k$, it suffices to prove that 

\begin{equation}\label{eq:induct} 
U_k(z,w) \leq a_k \cdot (c \cdot p)^k \cdot P(z,w), 
\end{equation}

\noindent 
for some constant $c=c(s_0)$ to be determined more precisely during the proof.
(After having proven~\eqref{eq:induct}, the statement will follow with $c_5 := 4\cdot c$.)

We will use induction. The base case $k=0$ is trivial as $U_0(z,w) = P(z,w)$.
Let us thus assume~\eqref{eq:induct} holds for $\ell = 0,\dots,k$ and consider $U_{k+1}(z,w)$.
If $\tdist(z,w) \leq \rho_0$ then there are no shortcut-free paths with $k+1$ internal nodes
and in particular $U_{k+1}(z,w) = 0$. So we assume $\tdist(z,w) > \rho_0$ from now on.
As usual, we write $z_0 = z, z_{k+2}=w$ in the remainder of the proof.

Recalling the definitions~\eqref{eq:psidef} of $\psi_n$ and $g_n$ we remark that, since some $z_\ell$ is the highest 
among $z_1,\dots,z_{k+1}$, we have%
$$ \psi_{k+1}(z_0,\dots,z_{k+2}) \leq \sum_{\ell=1}^{k+1}
\psi_{1}(z_0,z_\ell,z_{k+2}) \cdot \psi_{\ell-1}(z_0,\dots,z_\ell) \cdot 
\psi_{k+1-\ell}(z_\ell,\dots,z_{k+2}), $$

\noindent 
and hence also%
$$ g_{k+1}(z_0,\dots,z_{k+2}) \leq \sum_{\ell=1}^{k+1}
\psi_{1}(z_0,z_\ell,z_{k+2}) \cdot g_{\ell-1}(z_0,\dots,z_\ell) \cdot 
g_{k+1-\ell}(z_\ell,\dots,z_{k+2}), $$
\noindent
Combining this observation with~\eqref{eq:UkMecke} gives:

$$ \begin{array}{rcl} U_{k+1}(z,w)
& \leq & \displaystyle \sum_{\ell=1}^{k+1} (\lambda p)^{k+1} 
\int_{\Haa^d}\dots\int_{\Haa^d} \psi_{1}(z_0,z_\ell,z_{k+2}) \cdot g_{\ell-1}(z_0,\dots,z_\ell) \cdot 
g_{k+1-\ell}(z_\ell,\dots,z_{k+2}) \\
& & \displaystyle \hspace{5ex} 
\mu({\dd}z_1)\dots\mu({\dd}z_{k+1}) \\
& = & \displaystyle \sum_{\ell=1}^{k+1} \lambda p
\int_{\Haa^d} \psi_{1}(z_0,z_\ell,z_{k+2}) 
\cdot \left( (\lambda p)^{\ell-1} \int_{\Haa^d}\dots\int_{\Haa^d}
g_{\ell-1}(z_0,\dots,z_\ell) 
\mu({\dd}z_1)\dots\mu({\dd}z_{\ell-1})
\right)  \\
& & \displaystyle 
\cdot \left( (\lambda p)^{k+1-\ell} \int_{\Haa^d}\dots\int_{\Haa^d} 
g_{k+1-\ell}(z_\ell,\dots,z_{k+2}) 
\mu({\dd}z_{\ell+1})\dots\mu({\dd}z_{k+1})\right) \mu({\dd}z_\ell) \\
& = & \displaystyle 
\displaystyle \sum_{\ell=1}^{k+1} \lambda p
\int_{\Haa^d} \psi_{1}(z_0,z_\ell,z_{k+2}) \cdot U_{\ell-1}(z_0,z_\ell) \cdot U_{k+1-\ell}(z_\ell,z_{k+2}) 
\mu({\dd}z_\ell) \\
& \leq & \displaystyle 
\lambda p \cdot \left( c\cdot p \right)^k \cdot \left( \sum_{\ell=1}^{k+1} a_{\ell-1} a_{k+1-\ell} \right)
\cdot \int_{\Haa^d} \psi_{1}(z_0,z_\ell,z_{k+2}) \cdot P(z_0,z_\ell) \cdot P(z_\ell,z_{k+2}) 
\mu({\dd}z_\ell) \\
& = & \displaystyle 
\lambda p \cdot \left( c\cdot p \right)^k \cdot a_{k+1}
\cdot \int_{\Haa^d} g_1(z_0,z_\ell,z_{k+2}) 
\mu({\dd}z_\ell),
\end{array} $$

\noindent 
where the third line follows from~\eqref{eq:UkMecke}, the fourth line follows from the induction hypothesis, and
the last line 
by recalling that the Catalan numbers satisfy the recursion $a_{k+1} = \sum_{i = 0}^k a_i \cdot a_{k-i}$
and again recalling~\eqref{eq:UkMecke}.

Let us write $z_\ell = (x,y)$ with $x \in \eR^{d-1}$ and $y \in (0,\infty)$, and similarly
$z_0 = (x_0,y_0), z_{k+2} = (x_{k+2},y_{k+2})$.
Using~\eqref{eq:tdistnorm}, the integral in the last line of the last display can now be written as:

$$ \begin{array}{rcl} I 
& := & \displaystyle 
\int_{\Haa^d} g_1(z_0,z_\ell,z_{k+2}) 
\mu({\dd}z_\ell) \\
& = & \displaystyle 
\int_0^{\min(y_0,y_{k+2})} y^{-d} \int_{\eR^{d-1}}
\varphi\left(\lambda\left(\frac{\norm{x-x_0}}{2\sqrt{y_0y}}\right)^{d-1}\right) 
\cdot \varphi\left(\lambda\left(\frac{\norm{x-x_{k+2}}}{2\sqrt{y_{k+2}y}}\right)^{d-1}\right) {\dd}x{\dd}y. 
\end{array} $$

\noindent
By the triangle inequality, we have $\norm{x-x_0} \geq \norm{x_0-x_{k+2}}/2$ or 
$\norm{x-x_{k+2}} \geq \norm{x_0-x_{k+2}}/2$ for every $x \in \eR^{d-1}$.
Since $\varphi$ is non-increasing, it follows that

$$ \begin{array}{rcl} 
I 
& \leq & \displaystyle
    \int_0^{\min(y_0,y_{k+2})} y^{-d} \int_{\eR^{d-1}}
\varphi\left(\lambda\left(\frac{\norm{x-x_0}}{2\sqrt{y_0y}}\right)^{d-1}\right) 
\cdot \varphi\left(\lambda\left(\frac{\norm{x_0-x_{k+2}}}{4\sqrt{y_{k+2}y}}\right)^{d-1}\right) {\dd}x{\dd}y \\
& & \displaystyle 
+ \int_0^{\min(y_0,y_{k+2})} y^{-d} \int_{\eR^{d-1}}
\varphi\left(\lambda\left(\frac{\norm{x_0-x_{k+2}}}{4\sqrt{y_0y}}\right)^{d-1}\right) 
\cdot \varphi\left(\lambda\left(\frac{\norm{x-x_{k+2}}}{2\sqrt{y_{k+2}y}}\right)^{d-1}\right) {\dd}x{\dd}y \\
& =: & I_1 + I_2.
   \end{array}
$$ 

\noindent
Focusing attention on $I_1$ we have 

$$ \begin{array}{rcl}
    I_1 
    & = & \displaystyle 
    \int_0^{\min(y_0,y_{k+2})} y^{-d} \cdot \varphi\left(\lambda\left(\frac{\norm{x_0-x_{k+2}}}{4\sqrt{y_{k+2}y}}\right)^{d-1}\right)
    \cdot \left( \int_{\eR^{d-1}}
\varphi\left(\lambda\left(\frac{\norm{x-x_0}}{2\sqrt{y_0y}}\right)^{d-1}\right) 
 {\dd}x \right) {\dd}y \\
    & = & \displaystyle 
    \int_0^{\min(y_0,y_{k+2})} y^{-d} \cdot \varphi\left(\lambda\left(\frac{\norm{x_0-x_{k+2}}}{4\sqrt{y_{k+2}y}}\right)^{d-1}
    \right) \cdot 2^{d-1} \cdot y_0^{(d-1)/2}\cdot y^{(d-1)/2} \cdot (1/\lambda)  \\
    & & \displaystyle 
    \cdot 
    \left( \int_{\eR^{d-1}} \varphi\left(\norm{v}^{d-1}\right){\dd}v \right)
{\dd}y \\
& = & \displaystyle 
(1/\lambda) \cdot 2^{d-1} \cdot c_\varphi \cdot y_0^{(d-1)/2} \cdot 
\left( \int_0^{\min(y_0,y_{k+2})} y^{-(d+1)/2} \cdot \varphi\left(\lambda\left(\frac{\norm{x_0-x_{k+2}}}{4\sqrt{y_{k+2}y}}\right)^{d-1}
    \right) {\dd}y 
\right)
   \end{array} $$

\noindent 
where we've used the substitution $v := (\lambda^{1/(d-1)}/(2\sqrt{y_0 y}))\cdot (x-x_0)$ in the second line.
Now note that, by definition of $\varphi$ and since we chose $s_0 > 1$, we have 
$\varphi(s) \leq s_0^2 \cdot s^{-2}$ for all $s > 0$.
This gives 

$$ \begin{array}{rcl}
\varphi\left(\lambda\left(\frac{\norm{x_0-x_{k+2}}}{4\sqrt{y_{k+2}y}}\right)^{d-1}
    \right) 
    & \leq & 
    s_0^2 \cdot \left(\lambda\left(\frac{\norm{x_0-x_{k+2}}}{4\sqrt{y_{k+2}y}}\right)^{d-1}
    \right)^{-2} \\
    & = & s_0^2 \cdot 4^{d-1} \cdot \left(\lambda\left(\frac{\norm{x_0-x_{k+2}}}{2\sqrt{y_{0}y_{k+2}}}\right)^{d-1}
    \right)^{-2} \cdot y_0^{-(d-1)} \cdot y^{(d-1)} \\
    & = & s_0^2 \cdot 4^{d-1} \cdot \varphi\left(\lambda\left(\frac{\norm{x_0-x_{k+2}}}{2\sqrt{y_{0}y_{k+2}}}\right)^{d-1}\right) 
    \cdot y_0^{-(d-1)} \cdot y^{(d-1)} \\
    & = & s_0^2 \cdot 4^{d-1} \cdot P(z_0, z_{k+2}) \cdot y_0^{-(d-1)} \cdot y^{(d-1)}, 
\end{array} $$

\noindent
where in the third line we use our assumption $\tdist(z_0,z_{k+2}) > \rho_0$.
(Or, equivalently, using~\eqref{eq:tdistnorm} and the definition~\eqref{eq:rho0def} of
$\rho_0$, we have $\lambda \cdot \left( \norm{x_0-x_{k+2}} / (2\sqrt{y_0y_{k+2}}) \right)^{d-1} > s_0$.)

Filling this into the expression for $I_1$ gives

$$ \begin{array}{rcl} 
I_1 
& \leq &  
P(z_0, z_{k+2}) \cdot 
(1/\lambda) \cdot 8^{d-1} \cdot c_\varphi \cdot
s_0^2 \cdot y_0^{-(d-1)/2} 
\cdot \left( \int_0^{\min(y_0,y_{k+2})} y^{(d-3)/2} {\dd}y \right) \\
& = & 
P(z_0, z_{k+2}) \cdot 
(1/\lambda) \cdot 8^{d-1} \cdot c_\varphi \cdot s_0^2 \cdot y_0^{-(d-1)/2} 
\cdot \left(\frac{2}{d-1}\right) \cdot \min(y_0,y_{k+2})^{(d-1)/2} \\
& \leq & 
P(z_0, z_{k+2}) \cdot (1/\lambda) \cdot 8^{d-1} \cdot c_\varphi \cdot
s_0^2.
\end{array} $$ 

\noindent 
Swapping the role of $z_0,z_{k+2}$ and repeating the computations, we see the same bound applies to $I_2$.
We have thus obtained

$$ \begin{array}{rcl} U_{k+1}(z_0,z_{k+2}) & \leq &  
\lambda p \cdot \left( c\cdot  p \right)^k \cdot a_{k+1}
\cdot\left(I_1 + I_2\right) \\
& \leq &  
\lambda p \cdot \left( c\cdot p \right)^k \cdot a_{k+1} \cdot (1/\lambda) \cdot 2^{3d-2} \cdot c_\varphi 
\cdot s_0^2 \cdot P(z_0,z_{k+2}) \\
& = & 
(2^{3d-2} \cdot c_\varphi \cdot s_0^2) \cdot p \cdot \left( c\cdot p \right)^k \cdot a_{k+1} \cdot P(z_0,z_{k+2}) \\
& = & (c\cdot p)^{k+1} \cdot a_{k+1} \cdot P(z_0,z_{k+2}), 
\end{array} $$

\noindent
where the last line follows by finally revealing our choice of $c := 2^{3d-2} \cdot c_\varphi \cdot s_0^2$.
We've now established~\eqref{eq:induct} by induction. As observed earlier, the lemma statement follows.
\end{proofof}

\subsection{Proof of Proposition~\ref{prop:pathcontainsredpath}.\label{sec:pathcontainsredpath}}

\begin{proofof}{Proposition~\ref{prop:pathcontainsredpath}} 
We are given $z_0,\dots,z_n \in \Zcal_b \cup \{o\}$ such that $E(z_0,z_1),\dots,E(z_{n-1},z_n)$ all hold, and
we determine the subsequence $z_{j_0}, z_{j_1}, \dots, z_{j_k}$ with $j_0=0 < j_1 < \dots < j_k = n$
and nested sequences $N_0 \subseteq N_1 \subseteq \dots \subseteq N_k \subseteq \{1,\dots,k \}$,
$S_0 \subseteq S_1 \subseteq \dots \subseteq S_k \subseteq \{1,\dots,k \}$ and 
$J_0 \subseteq J_1 \subseteq \dots \subseteq J_k \subseteq \{1,\dots,k \}$
as follows.

We set $j_0:=0, N_0 = S_0 = J_0 := \emptyset$. Assuming we've already determined $j_0,\dots,j_i$ 
and $N_0,\dots, N_i, S_0,\dots,S_i, J_0,\dots,J_i$ with $j_i < n$ but we've not determined
$j_{i+1}$ yet, we define $j_{i+1}, N_{i+1}, S_{i+1}, J_{i+1}$ as follows.
We first define:

$$ \begin{array}{rcl}
U_i & := & \{ j > j_i + 1 : \tdist(z_{j_i},z_j) \leq \rho_0 \}, \\[2ex]

V_i & := & \begin{cases} 
            \emptyset & \text{ if $i\not\in N_i$, } \\
\left\{ j > j_i+1 : 
  \begin{array}{l} 
	\text{There exists }x \in \{x_0,x_0+1,\dots\} \text{ such that } \\
	 |\{ k \in N_i : \dist(c_{j_k},c_{j_i}) < x, |r_{j_k}-r_{j_i}|<5 x \}| \geq x^{4} \\
	 \text{and } \dist(z_{j_i}, z_j ) \leq r_{j_i} + 10 x
  \end{array}
\right\} & \text{ otherwise. }
\end{cases}.
\end{array} $$ 

\noindent 
We now set 

\begin{equation}\label{eq:jipluseendef} 
j_{i + 1} := \max( \{j_i+1\} \cup U_i \cup V_i ). 
\end{equation}

\noindent
If $\tdist(z_{j_i},z_{j_{i+1}}) \leq \rho_0$ then we 
set 

$$N_{i+1} := N_i, \quad S_{i+1} := S_i \cup \{i+1\}, \quad J_{i+1} := J_i. $$

\noindent
If $j_{i+1} = j_i + 1$ and $\tdist(z_{j_i},z_{j_{i+1}}) > \rho_0$ then we 
set 

$$N_{i+1} := N_i \cup \{i+1\}, \quad S_{i+1} := S_i, \quad J_{i+1} := J_i. $$

\noindent
If $j_{i+1} \in V_i \setminus U_i$ then we set 

$$N_{i+1} := N_i, \quad S_{i+1} := S_i, \quad J_{i+1} := J_i \cup \{i+1\}. $$

\noindent
Having determined $j_{i+1}, N_{i+1}, S_{i+1}, J_{i+1}$, we proceed to the next iteration 
unless $j_{i+1} = n$ in which case we stop.

To conclude the proof, it remains to verify that the sequence $v_0 = z_{j_0}, v_1 = z_{j_1}, \dots, v_k = z_{j_k}$ 
satisfies the definition of a reduced path, with $N_k, S_k, J_k$ taking the role of the partition $N, S, J$ in the definition.
That {\bf(i)} holds follows from the observation that whenever $i \in N_k$ we have 
$\tdist(z_{j_{i-1}},z_{j_i}) > \rho_0$ and $j_{i-1} = j_{i}-1$, 
so that 
$v_{i-1}, v_{i}$ are consecutive on the original path and in particular $E(v_{i-1},v_i) = E(z_{j_{i}-1},z_{j_i})$ holds.
That {\bf (ii)} and {\bf (iii)} hold also follows by construction (because of the choice of the sets $U_i,V_i$).
Similarly, {\bf(iv)} and {\bf(v)} hold by construction (because we take the maximum in~\eqref{eq:jipluseendef}). 
\end{proofof}

\subsection{Proof of Proposition~\ref{prop:pu}.\label{sec:pu}}

For convenience of the reader, and completeness, we provide the standard elementary derivation of
Proposition~\ref{prop:pu}.

\begin{proofof}{Proposition~\ref{prop:pu}}
Let $p > p_u(\lambda)$ be arbitrary, and write 
$U$ for the event that there is exactly one infinite cluster.
It is not immediate from the 
definition of $p_u$ that $\Pee_{p,\lambda}(U) > 0$.
We are however guaranteed the existence of a $q<p$ such that $\Pee_{q,\lambda}(U) > 0$.
Since the event $U$ is invariant under isometries of $\Haa^d$, we in fact have $\Pee_{q,\lambda}(U) = 1$. 
(See for example~Meester--Roy \cite[Definition~2.3, Proposition~2.6, and the paragraph after
Proposition~2.7]{MeesterRoy1996}. The proof is phrased for homogeneous Poisson processes 
on Euclidean space $\eR^d$, but the same proof applies to pairs of homogeneous Poisson point processes
$(\Zcal_b,\Zcal_w)$ on $\Haa^d$.) 
Write 

$$ c := \Pee_{q,\lambda}( o \pijl \infty ). $$

\noindent
By invariance under isometries, of course $\Pee_{q,\lambda}(x \pijl\infty) = c$, for all $x \in \Haa^d$.
We also have $c>0$, because otherwise 
$\Pee_{q,\lambda}(U)  \leq \Pee_{q,\lambda}( \bigcup_{x \in \Haa^d \cap \Qu^d} \{ x\pijl\infty\} ) 
\leq \sum_{x \in \Haa^d \cap \Qu^d} \Pee_{q,\lambda}( x\pijl\infty ) 
= 0$.
(Here we use that the cells of the Poisson-Voronoi tessellation a.s.~have non-empty interior.)
We can now write, for $x,y \in \Haa^d$ arbitrary: 

$$ \begin{array}{rcl} \Pee_{p,\lambda}( x\pijl y)
& \geq & 
\Pee_{q,\lambda}(x\pijl y ) 
\geq \Pee_{q,\lambda}( x\pijl\infty \text{ and } y\pijl\infty \text{ and } U ) \\[2ex]
& \geq & 
\Pee_{q,\lambda}( x\pijl\infty \text{ and } y\pijl\infty ) - \Pee_{q,\lambda}( U^c ) 
= \Pee_{q,\lambda}( x\pijl\infty \text{ and } y\pijl\infty ) \\[2ex]
& \geq & 
\Pee_{q,\lambda}(x\pijl\infty)\cdot \Pee_{q,\lambda}(y\pijl\infty ) = c^2, 
\end{array} $$

\noindent
where we've used the Harris-FKG inequality in the last line.
(To be precise, we use Harris--FKG for the independent pair of Poisson processes
$(\Zcal_b,\Zcal_w)$ in the following form: events that are increasing
in \(\mathcal Z_b\) and decreasing in \(\mathcal Z_w\) are positively correlated.
This follows from
\cite[Theorem~20.4]{LastPenrose2017}, applied first to the black process and then
to the white process.)
\end{proofof}

\subsection{Proof of Proposition~\ref{prop:pcpos}\label{sec:pcpos}}

\begin{proofof}{Proposition~\ref{prop:pcpos}}
	We write
\begin{equation*}
        V(r):=\mu(B(o,r)).
\end{equation*}
The black and white points are independent
Poisson processes $\mathcal{Z}_b,\mathcal{Z}_w$ of intensities $p\lambda$ and
$(1-p)\lambda$, respectively. We fix $A>0$, to be chosen large, and let $R_A$ be defined by
\begin{equation*}
        \lambda_0 V(R_A)=A .
\end{equation*}
Set
\begin{equation*}
        \Delta_A:=\sup_{0<R\le R_A}\frac{V(9R)}{V(R/2)} .
\end{equation*}
Since $V(r)\asymp r^d$ for $0<r\leq 1$ and
$V(r)\asymp e^{(d-1)r}$ for $r\geq 1$, we have
$\log \Delta_A=O(R_A)$, while, as $A\to\infty$,
$A=\lambda_0V(R_A)\asymp e^{(d-1)R_A}$.
Thus we may choose $A$ so large that 
\begin{equation}\label{prop5eq1}
        \Delta_A e^{-A/4}<1.                                      
\end{equation}
We now fix $\lambda\ge\lambda_0$, and choose $R=R(\lambda)$ by
\begin{equation*}
        \lambda V(R)=A .
\end{equation*}
Then $R\le R_A$. Let $\Ncal$ be a countable maximal $R$-separated subset
of $\mathbb H^d$. Thus the balls $B(v,R)$, $v\in\Ncal$, cover
$\mathbb H^d$. Put a graph structure on $\Ncal$ by joining $v,w$ whenever
\begin{equation*}
        \dist(v,w)\le 8R .
\end{equation*}
This graph has degree at most $\Delta_A$: if $w$ ranges over the neighbours
of $v$, then the balls $B(w,R/2)$ are disjoint and are all contained in
$B(v,9R)$.

We will call $v\in\Ncal$ bad if
\begin{equation*}
        \mathcal{Z}_b\cap B(v,4R)\ne\emptyset
        \qquad\text{or}\qquad
        \mathcal{Z}_w\cap B(v,R)=\emptyset .
\end{equation*}
For $p\le 1/2$,
\begin{equation*}
\begin{aligned}
\mathbb P(v\text{ is bad})
\le p\lambda V(4R)+\exp\{-(1-p)\lambda V(R)\}  
\le pA\,\frac{V(4R)}{V(R)}+e^{-A/2}.
\end{aligned}
\end{equation*}
Let
\begin{equation*}
        C_A:=A\sup_{0<R\le R_A}\frac{V(4R)}{V(R)}<\infty .
\end{equation*}
By (\ref{prop5eq1}), we may choose $p_0\le 1/2$ so small that, with
\begin{equation*}
        q:=p_0C_A+e^{-A/2},
\end{equation*}
we have
\begin{equation}\label{prop5eq2}
        \Delta_A\sqrt q<1.                                    
\end{equation}
Then, for every $p\le p_0$, each vertex of $\Ncal$ is bad with probability
at most $q$. Moreover, badness events at pairwise non-adjacent vertices are
independent, since they depend only on the Poisson processes inside the
corresponding balls $B(v,4R)$, and these balls are pairwise disjoint.

We claim that there is no infinite path made out of bad vertices. Fix $v\in\Ncal$. If the bad
cluster of $v$ is infinite, then for every $n$ it contains a bad path
\begin{equation*}
        v=v_0,v_1,\ldots,v_n
\end{equation*}
which is geodesic in the bad subgraph and no two non-consecutive vertices are adjacent. Hence
\begin{equation*}
        v_0,v_2,v_4,\ldots
\end{equation*}
are pairwise non-adjacent, and their badness events are independent. Therefore
any such fixed path of length $n$ is bad with probability at most
$q^{(n+1)/2}$. Since there are at most $\Delta_A^n$ paths of length $n$
starting at $v$,
\begin{equation*}
        \mathbb P(v\text{ lies in an infinite bad cluster})
        \le
        \Delta_A^n q^{(n+1)/2}
        \le
        \sqrt q\,(\Delta_A\sqrt q)^n .
\end{equation*}
Letting $n\to\infty$ and using (\ref{prop5eq2}), this probability is zero. Since
$\Ncal$ is countable, there is almost surely no infinite bad cluster.

It remains to compare the bad vertices with the black Voronoi region. Let
$x\in\mathbb H^d$ lie in a black cell, and choose $v\in\Ncal$ with
$\dist(x,v)<R$. If $v$ were not bad, then there would be a white point
$w\in B(v,R)$ and no black point in $B(v,4R)$. Hence
\begin{equation*}
        \dist(x,w)<2R,
\end{equation*}
whereas every black point $b$ would satisfy
\begin{equation*}
        \dist(x,b)>3R.
\end{equation*}
This contradicts that $x$ lies in a black cell. Thus every point of the black
region lies in $B(v,R)$ for some bad $v\in\Ncal$.

Consequently, if the black region had an unbounded connected component, then
the bad vertices whose $R$-balls meet this component would form an infinite
connected set in the graph on $\Ncal$. Indeed, a connected set covered by open
balls has connected intersection graph, and intersecting balls $B(v,R)$ and
$B(w,R)$ satisfy $\dist(v,w)<2R<8R$, so $v$ and $w$ are adjacent. This
would give an infinite bad cluster, which we have just ruled out.

Therefore, for every $\lambda\ge\lambda_0$ and every $p\le p_0$, the black
Voronoi region does not percolate, which was what we had to show.
\end{proofof}

\end{document}